\newtheorem{theorem}{Theorem}[section]
\newtheorem{lemma}[theorem]{Lemma}
\newtheorem{thm}[theorem]{Theorem}
\theoremstyle{definition}
\newtheorem{definition}[theorem]{Definition}
\newtheorem{example}[theorem]{Example}
\theoremstyle{remark}
\newtheorem{remark}[theorem]{Remark}
\newtheorem{rmk}[theorem]{Remark}
\newtheorem{prop}[theorem]{Proposition}
\newtheorem{cor}[theorem]{Corollary}
\newtheorem{err}[theorem]{Erratum}
\numberwithin{equation}{section}
\begin{document}
\title{ Gorenstein stable log surfaces with $(K_X+\Lambda)^2=p_g(X,\Lambda)-1$}
\author{Jingshan Chen}
\address{Yau Mathematical Sciences Center, Tsinghua University, Beijing 100084}
\curraddr{Yau Mathematical Sciences Center, Tsinghua University,Beijing 100084}
\email{chjingsh@mail.tsinghua.edu.cn}

\subjclass[2010]{14J10, 14J29}



\keywords{Gorenstein stable log surfaces, stable surfaces,  stable log Noether inequality, $\Delta$-genus}

\begin{abstract}
In this paper, we will give a complete classification of Gorenstein stable log surfaces $(X,\Lambda)$ with $(K_X+\Lambda)^2=p_g(X,\Lambda)-1$, where $p_g(X,\Lambda):=h^0(X,K_X+\Lambda)$.
In particular, we classify Gorenstein stable surfaces with $K_X^2=p_g-1$.
\end{abstract}

\maketitle

\section{Introduction}

KSBA stable (log) surfaces are the two-dimensional analogues of stable (pointed) curves. They are the fundamental objects in  compactifying  the moduli spaces of smooth surfaces of general type.

In general, stable (log) surfaces are difficult to classify. We may first focus on those with global index $I=1$, i.e. $K_X$ (resp. $K_X+\Lambda$) being Cartier. By an abuse of notation as in \cite{LR13}, they are called Gorenstein stable (log) surfaces.
In \cite{LR13}, Liu and Rollenske give several inequalities for numerical invariants of Gorenstein stable log surfaces. One important inequality among them is the stable log Noether inequality $(K_X+\Lambda)^2\ge p_g(X,\Lambda)-2$
(see \cite[Thm 4.1]{LR13}). This can be rephrased as $\Delta(X,K_X+\Lambda)\ge 0 $, where $\Delta$ is Fujita's $\Delta$-genus defined as $\Delta(X,\mathcal{L})=\mathcal{L}^{\mathrm{dim}X}-h^0(X,\mathcal{L})+\mathrm{dim}X$.

Gorenstein stable log surfaces with $\Delta(X,K_X+\Lambda)=0$ have been described in \cite{LR13}.
Normal Gorenstein stable log surfaces with $\Delta(X,K_X+\Lambda)=1$ have been classified in \cite{Chen18}.
Here we continue to classify non-normal Gorenstein stable log surfaces with $\Delta(X,K_X+\Lambda)=1$.
The main results are as follows.

\begin{theorem}
[irreducible case]
Let $(X,\Lambda)$ be an irreducible non-normal stable log surface with $K_X+\Lambda$ Cartier and $\Delta(X,K_X+\Lambda)=1$.
Let $\pi\colon \bar{X}\to X$ be the normalization map and let $\bar{D}$ and $\bar{\Lambda}$ be the pre-images of the non-normal locus $D$ and $\Lambda$.

Then
\begin{itemize}
\item either $\Delta(\bar{X},\pi^*(K_X+\Lambda))=1$ and $(X,\Lambda)$ is as in Thm \ref{delta1,1},
\item or $\Delta(\bar{X},\pi^*(K_X+\Lambda))=0$ and $(\bar{X},\bar{\Lambda}+\bar{D})$ is as in Thm \ref{delta1,0}.
\end{itemize}

In particular, if $\Lambda=0$, then $X$ is one of the followings:
\begin{itemize}
	\item $X$ is a double cover of $\mathbb{P}^2$. The branch curve is $2C+B$, where $C$ and $B$ are reduced curves of degree $4-k$ and $2k$ respectively. $k=2,3$.  ($p_g(X)=3$)
	\item $X$ is obtained from a log surface $(\bar{X},\bar{D})$ by gluing the 2-section $\bar{D}$,   where $(\bar{X},\bar{D})$ is a normal Gorenstein stable log surface as in Thm \ref{delta-genus-one} (2) or (3).  ($p_g(X)=2$)
	\item $X$ is obtained from $\mathbb{P}^2$ by gluing a quartic curve $\bar{D}$. ($p_g(X)=2$)
	\item  $X$ is obtained from a quadric in $\mathbb{P}^3$ by gluing a curve $\bar{D}$, where $\bar{D}\in |\mathcal{O}_{\bar{X}}(3)|$. ($p_g(X)=3$)
\end{itemize}

\end{theorem}

\begin{theorem}[reducible case]
Let $(X,\Lambda)$ be a reducible stable log surface with $K_X+\Lambda$ Cartier and $\Delta(X,K_X+\Lambda)=1$.
Write $X=\bigcup X_i$, where $X_i$ is an irreducible component.

Then  $\Delta(X_i,(K_X+\Lambda)|_{X_i})=1$ or $0$ for each component $X_i$,
$X$ has a unique minimal connected component $U$ such that $\Delta(U,(K_X+\Lambda)|_{U})=1$, $X\setminus U$ is composed of several trees of surfaces $T_j$'s with $\Delta(T_j,(K_X+\Lambda)|_{T_j})=0$ and $X$ is glued by $U$ and $T_j$'s along lines, i.e. $X=U\cup \bigcup T_j$ with each $U\cap T_j$ a line with respect to $K_X+\Lambda$. $U$ is as described in Thm \ref{nonnormdelta-1}.

In particular, if $\Lambda=0$, $X$ is a union of two $\mathbb{P}^2$'s glued along a quartic curve on each $\mathbb{P}^2$.  ($p_g(X)=3$)

\end{theorem}

We give a brief account of each section. In \textsection \ref{prelim}, we review the definition and some facts about Gorenstein stable log surfaces.
In \textsection \ref{zero-Delta-genus}, we recall the definition of Fujita's $\Delta$-genus and include some results about normal Gorenstein stable log surfaces $(X,\Lambda)$ with
$\Delta(X,K_X+\Lambda)=0\,, 1$. 
In \textsection \ref{non-normal}, we discuss non-normal Gorenstein stable log surfaces.
In \textsection \ref{nnorm}, we deal with the case that $X$ is  non-normal and irreducible.
In \textsection \ref{reducible-stable}, we work on the case that $X$ is reducible.
Finally we describe Gorenstein stable surfaces with $K_X^2= p_g-1$.

\subsection*{Acknowledgements:}
I am grateful to Prof. Jinxing Cai and Prof. Wenfei Liu for their instructions. I would also thank the anonymous referee for helpful advices and suggestions.

\subsection{Notations and conventions}
We work exclusively with schemes of finite type over the complex numbers $\mathbb{C}$.
\begin{itemize}
\item[.] A surface is a connected reduced projective  scheme of pure dimension two.
\item[.] By an abuse of notation, we sometimes do not distinguish between a Cartier divisor $D$ and its associated invertible sheaf $\mathcal{O}_X(D)$.
\item[.] $\Sigma_d$ denotes a Hirzebruch surface, which admits a $\mathbb{P}^1$ fibration over $\mathbb{P}^1$. We denote by $\Gamma$ a fiber and by $\Delta_0$ the 1-section  whose self-intersection is $-d$.
\item[.] We use '$\equiv$' to denote linear equivalent relation of divisors.
\item[.] If $D$ is a Cartier divisor on $X$, then we denote by $\Phi_{|D|}\colon X\dashrightarrow \mathbb{P}:=|D|^*$  the rational map defined by the linear system $|D|$.
\item[.] A line $l$ on a variety $X$ with respect to $\mathcal{O}_X(1)$ is a rational curve such that $l\cdot \mathcal{O}_X(1)=1$.

\item[.] We say a connected reducible surface $X=\bigcup \limits_{i=1}^{n} X_i$ is a \emph{string} of surfaces, if each $X_i$ is irreducible, $X_i\cap X_j$ is either an irreducible curve or a set of points(which may be empty) for all $j\not=i$, and each $X_i$ is connected to at most two other surfaces in codimension one.

\item[.] We say a connected reducible surface $X=\bigcup \limits_{i=1}^{n} X_i$, ($n\ge3$) is a \emph{cycle} of surfaces, if each $X_i$ is irreducible, $X_i\cap X_j$ is either an irreducible curve or a set of points(which may be empty) for all $j\not=i$ and there is an indexing such that $X_i\cap X_{i+1}$, $i=1,...,n-1$ and $X_n\cap X_1$ are all irreducible curves.

A reducible surface $X_1\cup X_2$ is also called  a \emph{cycle} of surfaces, if each $X_i$ is irreducible and $X_1\cap X_2$ consists of two irreducible curves.

\item[.] We say a connected reducible surface $X=\bigcup \limits_{i=1}^{n} X_i$ is a \emph{tree} of surfaces, if $X_i\cap X_j$ is either an irreducible curve or a set of points(which may be empty) for all $j\not=i$ and there is an indexing such that $\bigcup\limits_{i=1}^{j}X_i\cap X_{j+1}$ is an irreducible curve for each $j$.
\end{itemize}

\section{Preliminaries}
\label{prelim}

In this section we include some notions and definitions from \cite{LR13} and \cite[\textsection~5.1--5.3]{KollarSMMP}.

Let $X$ be a demi-normal surface, i.e. $X$ satisfies $S_2$ and has at worst an ordinary double point at any generic point of codimension 1.
Denote by $\pi\colon  \bar X \to X$ the normalization map of $X$. The conductor ideal
$ \mathrm{\mathcal{H}om}_{\mathcal{O}_X}(\pi_*\mathcal{O}_{\bar{X}}, \mathcal{O}_X)$
is an ideal sheaf both on $X$ and $\bar{X}$ and hence defines subschemes
$D\subset X \text{ and } \bar D\subset \bar X$,
both reduced and of pure codimension 1. $D$ is referred to as the non-normal locus of $X$.

Let $\Lambda$ be a reduced curve on $X$ whose support does not contain any irreducible component of $D$. Then the strict transform $\bar \Lambda$ in $\bar{X}$ is well defined.

Such a pair $(X, \Lambda)$ as above is called a \emph{log surface}; $\Lambda$ is called the (reduced) boundary.
We have $\pi^*(K_X+\Lambda)=K_{\bar{X}}+\bar D+\bar \Lambda$ . 
\begin{definition}
	\label{defin: slc}
A log surface $(X,\Lambda)$ is said to have \emph{semi-log-canonical (slc)}  singularities if it satisfies the following conditions:
\begin{enumerate}
 \item $K_X + \Lambda$ is $\mathbb{Q}$-Cartier, i.e. $m(K_X+\Lambda)$ is Cartier for some $m\in\mathbb{Z}^{>0}$; the minimal such $m$ is called the (global) index of $(X,\Lambda)$.
\item The pair $(\bar X, \bar \Lambda+\bar D)$ has log-canonical singularities.
\end{enumerate}

The pair $(X,\Lambda)$ is called a stable log surface if in addition $K_X+\Lambda$ is ample.  A stable surface is a stable log surface with empty boundary.

By an abuse of notation as in \cite{LR13} $(X, \Lambda)$ is called a Gorenstein stable log surface if  the index is equal to one, i.e. $K_X+\Lambda$ is an ample Cartier divisor.
\end{definition}

\subsection{Koll{\'a}r's gluing principle}
 Since $X$ has at most double points in codimension one the map $\pi|_{\bar{D}}\colon \bar D \to D$ is generically a double cover and thus  induce a rational involution on $\bar D$. Normalizing $\bar D$ we get an honest involution $\tau\colon \bar D^\nu\to \bar D^\nu$ such that $D^\nu = \bar D^\nu/\tau$ and the different $\mathrm{Diff}_{\bar D^\nu}(\Lambda)$ is $\tau$-invariant(for the definition of the \emph{different} see \cite[\textsection~5.11]{KollarSMMP}).

\begin{thm}[{\cite[Thm.~5.13]{KollarSMMP}}]\label{thm: triple}
	Associating to a log-surface $(X, \Lambda)$ the triple $(\bar X, \bar D+\bar \Lambda, \tau\colon \bar D^\nu\to \bar D^\nu)$ induces a one-to-one correspondence
	\[
	\left\{ \text{\begin{minipage}{.12\textwidth}
		\begin{center}
		stable log surfaces  $(X, \Lambda)$
		\end{center}
		\end{minipage}}
	\right\} \leftrightarrow
	\left\{ (\bar X, \bar D+\bar \Lambda, \tau)\left|\,\text{
		\begin{minipage}{.37\textwidth}
		$(\bar X, \bar D+\bar \Lambda)$ a log-canonical pair with
		$K_{\bar X}+\bar D+\bar \Lambda$ ample, \\
		$\tau\colon \bar D^\nu\to \bar D^\nu$  an involution s.t.\
		$\mathrm{Diff}_{\bar D^\nu}(\bar\Lambda)$ is $\tau$-invariant.
		\end{minipage}}\right.
	\right\}.
	\]
\end{thm}
\subsection{Classification of Gorenstein slc singularities}\label{clssofslc}
Let $x\in (X,\Lambda)$ be a germ of a Gorenstein slc singularity, then it is one of the following(see \cite[Ch.~4]{Kollar-Mori},  \cite[Sect.~3.3]{KollarSMMP}, \cite{kollar12} and \cite{LR13}):
\begin{enumerate}[1)]
	\item  $\Lambda=0$, $X$ is non-normal and $x$ is a general point of the non-normal locus $D$. In this case $\bar{X}$ is smooth and has two components, and $\bar{D}$ is a sum of two curves.
	\item  $\Lambda=0$, $X$ is non-normal and $x$ is a pinch point whose local model in $\mathbb{A}^3$ is given by the equation $x^2+yz^2=0$. In this case $\bar{X}$ is smooth and has one component, and $\bar{D}$ is an irreducible curve.
	\item $\Lambda=0$, $X$ is normal and $x$ is a canonical singularity of $X$, i.e an A-D-E surface singularity.
	\item $\Lambda=0$, $X$ is normal and $x$ is a simple elliptic singularity or a cusp singularity of $X$. The exceptional curve of $x$ is a smooth elliptic curve, a nodal rational curve or a cycle of smooth rational curves.
	\item $\Lambda\not =0$, $X$ is normal and smooth, and $x$ is a node of $\Lambda$.
	\item $\Lambda\not =0$, $X$ is normal, $x$ is a cyclic quotient singularity of $X$, and $\Lambda$ is a generic hyperplane section of $x$. In the minimal log resolution the dual graph of the exceptional curves is
	\[\bullet   \ {-}\ c_1  \ -\ \cdots  \ - \ c_n  \ {-}
	\ \bullet \qquad (c_i\geq2),\]
	where $c_i$ represents a smooth rational curve of self-intersection $-c_i$ and each $\bullet$ represents a (local) component of the strict transform of $\Lambda$.
	\item $\Lambda=0$, $X$ is non-normal and its normalization $(\bar{X},\bar{D})$ has $k\ge 1$ components with each component $(\bar{X}_i,\bar{D}_i)$ has a singularity $\bar{x}_i$ of type 5) or 6). If $k\ge2$, $X$ is a cycle of surfaces. If $k=1$, $\bar{D}=\bar{D}_1+\bar{D}_2$ and $\tau$ permutes $\bar{D}_1$ and $\bar{D}_2$.
	\item $\Lambda\not=0$, $X$ is non-normal. $X$ is a string of surfaces whose normalization $(\bar{X},\bar{D}+\bar{\Lambda})$ has $k\ge 2$ components  $(\bar{X}_1,\bar{\Lambda}_1+\bar{D}_1)$, $(\bar{X}_i,\bar{D}_i)$, $i=2,...,k-1$, and $(\bar{X}_k,\bar{\Lambda}_k+\bar{D}_k)$. Each component has a singularity $\bar{x}_i$ of type 5) or 6).
\end{enumerate}

\section{Normal Gorenstein stable log surfaces with small $\Delta$-genus}
\label{zero-Delta-genus}
Let $X$ be a variety and $\mathcal{L}$ be an ample line bundle on it.
Fujita introduced several invariants for such polarized varieties. One important of them is the
$\Delta$-genus $\Delta(X,\mathcal{L}):=\mathcal{L}^{\dim X}-h^0(X,\mathcal{L})+\dim X$. Some fundamental facts about $\Delta$-genus are included in the following theorem: (see \cite{Fuj75,Fuj90}).

\begin{thm}\label{Fujita-Delta-genus}
 	Let $(X,\mathcal{L})$ be a polarized variety.
 	Then
 	\begin{displaymath}
 	\Delta(X, \mathcal{L})\ge 0,
 	\end{displaymath}
 	and
 	\begin{displaymath}
 	dim\,Bs|\mathcal{L}|< \Delta(X, \mathcal{L}),
 	\end{displaymath}
 	where $Bs|\mathcal{L}|$ is the base locus of $|\mathcal{L}|$.
 	
 	Moreover, $\mathcal{L}$ is very ample if $\Delta(X, \mathcal{L})= 0$.
\end{thm}

For normal irreducible Gorenstein stable log surfaces with $\Delta(X,K_X+\Lambda)= 0,1$, we have the following results:(see \cite{Chen18})

\begin{cor}\label{gepg-2}
Let $(X,\Lambda)$ be a normal irreducible Gorenstein stable log surface. Then
$\Delta(X,K_X+\Lambda)\ge 0$.

Moreover if '=' holds, then
$(X,\Lambda)$ is one of the followings:
\begin{itemize}

\item[(i)] $X$ is $\mathbb{P}^{2}$, $\mathcal{O}_X(K_X+\Lambda) =\mathcal{O}_{\mathbb{P}^{2}}(1)$ and $\Lambda\in|\mathcal{O}_{\mathbb{P}^{2}}(4)|$; 
\item[(ii)] $X$ is $\mathbb{P}^{2}$,  $\mathcal{O}_X(K_X+\Lambda)=\mathcal{O}_{\mathbb{P}^{2}}(2)$ and $\Lambda\in|\mathcal{O}_{\mathbb{P}^{2}}(5)|$;
\item[(iii)] $X$ is $\Sigma_d$, $K_X+\Lambda\equiv \Delta_0+\frac{N+d-1}{2}\Gamma$ and  $\Lambda\in|3\Delta_0+\frac{N+3d+3}{2}\Gamma|$; ($N-d-3\ge0$ is an even number);

\item[(iv)] $X$ is a singular quadric $C_2$ in $\mathbb{P}^{3}$, $\mathcal{O}_X(K_X+\Lambda)=\mathcal{O}_{C_2}(1)=\mathcal{O}_{\mathbb{P}^{2}}(1)|_{C_2}$ and  $\Lambda\in|\mathcal{O}_{C_2}(3)|$;
\item[(v)]  $X$ is a cone $C_{N-1}\hookrightarrow \mathbb{P}^{N}$, $\mathcal{O}_X(K_X+\Lambda)=\mathcal{O}_{\mathbb{P}^{N}}(1)|_{C_{N-1}}$ and the proper transformation $\bar{\Lambda}$ in the minimal resolution $\Sigma_{N-1}$ is linearly equivalent to $2\Delta_0+2N\Gamma$. ($N\ge 4$)
\end{itemize}
\end{cor}

\begin{theorem}\label{delta-genus-one}
Let $(X,\Lambda)$ be a normal Gorenstein stable log surface with $(K_X+\Lambda)^2= p_g(X,\Lambda)-1$.

Then $(X,\Lambda)$ is one of the followings:
\begin{enumerate}[(1)]
\item $\Lambda=0$, and $X$ is canonically embedded as a hypersurface of degree 10 in the smooth locus of $\mathbb{P}(1,1,2,5)$. In this case $|K_X|$ is composite with a pencil of genus $2$ curves.

\item $X$ is a (possibly singular) Del Pezzo surface of degree 1, namely $X$ has at most canonical singularities, $-K_X$ is ample and $K_X^2=1$. The curve $\Lambda$ belongs to the system $|-2K_X|$, and $p_a(\Lambda)$=2.

\item  
$X$ is a singular surface with an elliptic singularity, $-K_X$ ample and $K_X^2=1$.  The curve $\Lambda$ belongs to the system $|-2K_X|$, and $p_a(\Lambda)$=2.

\item $X$ is obtained from $\tilde{X}$ by contracting a $(-N)$ curve $G$, where $N:=p_g(X,\Lambda)-1\ge2$ and $\tilde{X}$ is an elliptic surface (possible singular) with $G$ as the rational zero section. Every elliptic fiber of $\tilde{X}$ is irreducible. 
$\Lambda$ is the image of  a sum of two different elliptic fibers  which admit at worst $A_n$ type singularities.

\item  $X$ is a double cover of $\mathbb{P}^2$. $\Phi_{|K_X+\Lambda|}\colon X\to \mathbb{P}^2$ is the double covering map.  The branch curve $B\in |\mathcal{O}_{\mathbb{P}^2}(2k)|$ is a reduced curve which admits curve singularities of lc double-covering type, and $\Lambda\in |{\Phi_{|K_X+\Lambda|}}^*\mathcal{O}_{\mathbb{P}^2}(4-k)|$.  ($p_g(X,\Lambda)=3$, and $k=2,3,4$)
\item  $X$ is a quadric in $\mathbb{P}^3$, and $\Lambda\in |\mathcal{O}_{\mathbb{P}^3}(4)|_X|$. ($p_g(X,\Lambda)=6$)
\item  $X$ is $\mathbb{P}^2$ blown up at $k$ points (possible infinitely near), and $\Lambda\in |-2K_X|$. ($p_g(X,\Lambda)=10-k$, and $k=0,1,...,7$)
\item  $X$ is  a cone over an elliptic curve of degree $N$ in $\mathbb{P}^{N-1}$, and $\Lambda\in |-2K_X|$. ($p_g(X,\Lambda)=N$)

\end{enumerate}

\end{theorem}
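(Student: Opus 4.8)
\emph{Setup and a trichotomy.} Write $L := K_X+\Lambda$; this is an ample Cartier divisor and, $X$ being normal, the pair $(X,\Lambda)$ is log canonical. The hypothesis reads $L^2 = h^0(X,L)-1 = p_g(X,\Lambda)-1$, i.e. $\Delta(X,L)=1$; put $N := h^0(X,L)-1 = L^2$, so $\Phi := \Phi_{|L|}$ maps to $\mathbb{P}^N$. I would first separate three cases: (a) $\dim\overline{\Phi(X)}=1$, i.e. $|L|$ is composed with a pencil; (b) $\Phi$ is generically finite of degree $\geq 2$ onto a surface; (c) $\Phi$ is birational onto its image. Higher degrees are excluded by a degree count: if $\Phi$ is generically finite of degree $e$ onto a nondegenerate surface $Y\subset\mathbb{P}^N$ then $L^2\geq e\cdot\deg Y\geq e(N-1)=e(L^2-1)$, so $e\leq 2$, with $e=2$ forcing $L^2=2$ and $Y=\mathbb{P}^2$. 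Alongside this I would extract, from Riemann--Roch and $L=K_X+\Lambda$, bounds on the sectional genus $g(X,L)$ and on $q(X)=h^1(\mathcal{O}_X)$ which, with $\Delta=1$, constrain the numerical invariants in every case.

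\emph{Cases (c) and (b): del Pezzo surfaces and double covers.} In case (c), $Y:=\overline{\Phi(X)}$ is a nondegenerate surface of degree $N$ in $\mathbb{P}^N$, i.e. of degree equal to codimension $+\,2$. By Fujita's classification of polarized surfaces of $\Delta$-genus one (equivalently the structure theory of surfaces of almost minimal degree), $Y$ is either a Gorenstein del Pezzo surface, anticanonically polarized, with at worst canonical or simple-elliptic-cone singularities, or an isomorphic projection of a surface of minimal degree. After showing $|L|$ is base-point free when $L^2\geq 3$ (the exceptions $L^2\leq 2$ being absorbed into (a) and (b)), $\Phi$ is a finite birational morphism onto the normal surface $Y$, hence an isomorphism $X\cong Y$; since $Y$ is anticanonically polarized, $\mathcal{O}_X(1)=L=-K_X$, so $X$ is a del Pezzo surface of degree $N$ and $\Lambda=L-K_X=-2K_X$. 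Running through the list of such surfaces ($\mathbb{P}^2$ blown up at $\leq 7$ points, the smooth and the singular quadric in $\mathbb{P}^3$, cones over elliptic curves) and checking that a reduced $\Lambda\in|-2K_X|$ with $(X,\Lambda)$ log canonical exists yields (2), (3), (4); the projection alternative either reduces to the quadric or does not occur. In case (b), $L^2=2$, $p_g=3$, and $\Phi$ Stein-factorizes through a finite degree-two morphism $f\colon X\to\mathbb{P}^2$ of normal surfaces with $L=f^*\mathcal{O}(1)$. Writing $f_*\mathcal{O}_X=\mathcal{O}\oplus\mathcal{O}(-k)$ with branch curve $B\in|\mathcal{O}(2k)|$, one has $K_X=f^*\mathcal{O}(k-3)$, hence $\Lambda=L-K_X=f^*\mathcal{O}(4-k)$; effectivity of $\Lambda$ and ampleness of $L$ give $2\leq k\leq 4$, and log canonicity of $(X,\Lambda)$ is exactly the statement that $B$ has singularities of lc double-covering type. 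This is (1).

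\emph{Case (a): $|L|$ composed with a pencil --- the main obstacle.} This is the technical heart. I would pass to a resolution $\mu\colon\tilde X\to X$, so that $\tilde L:=\mu^*L$ is nef and big with $\tilde L^2=L^2$ and $h^0(\tilde L)=h^0(L)$; then $(\tilde X,\tilde L)$ still has $\Delta=1$ and $|\tilde L|$ is composed with a pencil, which (after further blow-ups) fibres $\tilde X$ over a curve one shows to be $\mathbb{P}^1$. Writing $\tilde L\equiv aM_0+Z$ with $M_0$ a fibre of this fibration and $Z\geq 0$ the fixed part, and intersecting against $M_0$ and $Z$, the identity $\tilde L^2=N$ together with the Hodge index theorem and adjunction (using $\tilde L=\mu^*(K_X+\Lambda)$ to control $M_0\cdot K_{\tilde X}$) bound the arithmetic genus of the general fibre $F$ by $p_a(F)\leq 2$. \textbf{If $p_a(F)=2$:} one shows first that $\Lambda=0$ (a non-trivial boundary would produce extra sections, against $\Delta=1$), so $|K_X|$ is composed with a genus-two pencil, and the analysis of the relative canonical algebra of the associated genus-two fibration (Horikawa, Reid) realizes $X$ as a degree-ten hypersurface in the smooth locus of $\mathbb{P}(1,1,2,5)$ --- this is (7). \textbf{If $p_a(F)=1$:} the pencil is, relatively minimally, an elliptic fibration; when it has a base point one identifies $X$ with a del Pezzo surface of degree $1$ and $\Lambda=-2K_X$, giving (6); when it is base-point free, a relative minimal model produces an elliptic surface over $\mathbb{P}^1$ in which the curve contracted to recover $X$ is a $(-n)$-curve $G$ with $n=p_g-1$, all fibres irreducible, and $\Lambda$ the image of a sum of two distinct fibres carrying at worst $A_n$ singularities --- this is (5). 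The delicate points throughout are the bookkeeping of the fixed part $Z$, the possibly non-Gorenstein singularity of $X$ created by the contraction, and verifying that $(X,\Lambda)$ remains log canonical, which is where the classification of Gorenstein slc singularities in Remark \ref{rem: classification of sings} is used. I expect case (a), and in particular this last interplay between the fixed part of $|L|$, the contraction, and the boundary, to be the main source of difficulty; once Fujita's $\Delta$-genus-one theory is in hand, (b) and (c) are comparatively mechanical.
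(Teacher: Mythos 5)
You should first be aware that this paper does not actually prove Theorem \ref{delta-genus-one}: it is imported verbatim from the author's earlier article \cite{Chen18} (whose title is exactly this statement), and the only thing the present paper adds is the Erratum correcting $k=1,2,3,4$ to $k=2,3,4$ in item (1). So there is no in-paper proof to measure you against, and I can only compare your plan with the strategy that the shape of the statement (and the cited literature) makes essentially inevitable. On that level your architecture is the right one and surely close to \cite{Chen18}: a trichotomy on the behaviour of $\Phi_{|K_X+\Lambda|}$ (composed with a pencil / generically finite of degree $\ge 2$ / birational), the degree count $L^2\ge e(L^2-1)$ forcing $e\le 2$ with $e=2$ only when $L^2=2$, Fujita's $\Delta$-genus-one theory in the birational case, and the double-cover formulae giving $\Lambda=f^*\mathcal{O}_{\mathbb{P}^2}(4-k)$ with $2\le k\le 4$ (your range agrees with the Erratum). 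The seven outcomes are matched one-to-one by your cases.

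As a proof, however, two points are genuinely missing rather than merely compressed. First, in the birational case you classify the image as ``Gorenstein del Pezzo or isomorphic projection of a surface of minimal degree''; this omits the irregular polarized surfaces of $\Delta$-genus one, above all the elliptic scrolls, which also have degree equal to codimension $+2$ and are not ruled out by anything you have said. They must be excluded using $L=K_X+\Lambda$: since $\chi(\mathcal{O}_X)=0$ and the sectional genus is $1$, Riemann--Roch gives $\chi(K_X+\Lambda)=\tfrac12 L\cdot\Lambda=L^2$, so $p_g(X,\Lambda)=L^2+h^1(-\Lambda)$, and one must show $h^1(-\Lambda)\neq 1$ (equivalently control the connectedness of $\Lambda$ and the injectivity of $H^1(\mathcal{O}_X)\to H^1(\mathcal{O}_\Lambda)$) before concluding $\Delta\ge 2$; your parenthetical promise to ``bound $q(X)$'' points in this direction but the argument is absent, and it is not a formality. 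Second, the pencil case --- the source of outcomes (5), (6) and (7) and, as you yourself say, the technical heart --- is presented entirely in the subjunctive: the bound $p_a(F)\le 2$, the implication $p_a(F)=2\Rightarrow\Lambda=0$, the identification of the relative minimal model, the fact that the contracted section is a $(-n)$-curve with $n=p_g-1$ and that all fibres are irreducible, and the verification that the resulting pairs are log canonical and Gorenstein are each asserted as ``one shows''. These are exactly the places where the stability and Gorenstein hypotheses and the classification of slc singularities (Remark \ref{rem: classification of sings}) do the work, so none of them can be waved through. Until they are written out, what you have is a correct and well-organized outline of the expected proof, not a proof.
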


\begin{remark}
The cases (1)-(3) are in Thm 4.1 of \cite{Chen18} which is a result collected from \cite{FPR15a} and \cite{FPR15b}.  The case (4) is in Thm 4.4 \cite{Chen18} and the cases (5)-(8) are in Thm 4.3 of \cite{Chen18}.
In fact, the cases (2) and (3) are special cases of (4) when $N=1$.
Moreover, in the cases  (1)-(4)  $|K_X+\Lambda|$ is composite with a pencil and in the cases (5)-(8)  $|K_X+\Lambda|$ is not composite with a pencil.
\end{remark}
\begin{err}
	In \cite{Chen18} Theorem 1.1(1)  and Theorem 4.3(i), $k=1,2,3,4$ should be corrected by $k=2,3,4$. The case $k=1$ can be excluded. Since in this case $h^0(X,K_X+\Lambda)=h^0(\mathbb{P}^2,\mathcal{O}_{\mathbb{P}^2}(1))+h^0(\mathbb{P}^2,\mathcal{O}_{\mathbb{P}^2})=4$, which implies $\Delta(X,K_X+\Lambda)=0$, contradicting the hypothesis of the theorem.	
	This erratum applies to case (5) of Thm \ref{delta-genus-one}.
\end{err}

\section{Non-normal Gorenstein stable log surfaces}\label{non-normal}

Let $(X,\Lambda)$ be a Gorenstein stable log surface which is non-normal. $D$ is the non-normal locus of $X$.

\begin{thm}
Let $C\subset D$ be a subcurve of the non-normal locus of a Gorenstein stable log surface $(X,\Lambda)$ and $\nu\colon\bar{X}\to X$ be the partial normalization of $X$ along $C$, i.e. a normalization along $C$ and then a seminormalization.
Denote by $\bar C$ and $\bar{\Lambda}$ the proper transformation of $C$ and $\Lambda$.

Then $(\bar{X},\bar{\Lambda}+\bar C)$ is a Gorenstein stable log surface.
\end{thm}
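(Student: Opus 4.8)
The plan is to verify the three defining properties of a Gorenstein stable log surface for $(\tilde X,\tilde\Lambda+\tilde C)$: that $K_{\tilde X}+\tilde\Lambda+\tilde C$ is an ample Cartier divisor, that $\tilde X$ is demi-normal, and that $(\tilde X,\tilde\Lambda+\tilde C)$ has semi-log-canonical singularities.

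First I would establish the adjunction formula $\nu^*(K_X+\Lambda)=K_{\tilde X}+\tilde C+\tilde\Lambda$. Factor the normalization $\pi\colon\bar X\to X$ as $\bar X\xrightarrow{\ \mu\ }\tilde X\xrightarrow{\ \nu\ }X$, so that $\mu$ is the normalization of $\tilde X$; then the conductor of $\mu$ is $\bar D'=\overline{\bar D\setminus\bar C}$, while $\tilde C=\nu^{-1}(C)_{\mathrm{red}}$ is the conductor of $\nu$ (over the generic points of $C$ the map $\nu$ separates the two branches of $X$, so $\tilde C$ may fail to be irreducible, but it is a reduced divisor, and its support contains no component of the non-normal locus of $\tilde X$, since opening up a node places the seam in the smooth locus generically). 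Since the strict transforms under $\mu$ of $\tilde C$ and $\tilde\Lambda$ are $\bar C$ and $\bar\Lambda$, comparing $\mu^*(K_{\tilde X}+\tilde C+\tilde\Lambda)=K_{\bar X}+\bar D'+\bar C+\bar\Lambda=K_{\bar X}+\bar D+\bar\Lambda=\pi^*(K_X+\Lambda)$ with $\pi^*=\mu^*\circ\nu^*$ and using that $\mu$ is finite and surjective yields the claimed formula. The morphism $\nu$ is finite because $\nu_*\mathcal O_{\tilde X}$ is an $\mathcal O_X$-submodule of the coherent sheaf $\pi_*\mathcal O_{\bar X}$; hence $K_{\tilde X}+\tilde C+\tilde\Lambda$, being the pullback along a finite morphism of the ample Cartier divisor $K_X+\Lambda$, is again ample and Cartier. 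In particular $(\tilde X,\tilde\Lambda+\tilde C)$ automatically has index one once it is shown to be slc.

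Next I would check the slc condition. The normalization of $\tilde X$ is $\mu\colon\bar X\to\tilde X$ with conductor $\bar D'$, and by the computation above the strict transforms of the boundary components $\tilde\Lambda$ and $\tilde C$ are $\bar\Lambda$ and $\bar C$. Therefore the associated pair on the normalization is $(\bar X,\bar D'+\bar C+\bar\Lambda)=(\bar X,\bar D+\bar\Lambda)$, which is log canonical precisely because $(X,\Lambda)$ is slc (Definition \ref{defin: slc}(2)). Combined with the $\mathbb Q$-Cartier property of $K_{\tilde X}+\tilde C+\tilde\Lambda$ from the previous step, this gives the slc property of $(\tilde X,\tilde\Lambda+\tilde C)$, provided $\tilde X$ is demi-normal.

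The remaining point, which I expect to be the main obstacle, is demi-normality of $\tilde X$: that $\tilde X$ satisfies $S_2$ and is at worst nodal at the generic points of its codimension-one singular locus. This cannot be read off from $\bar X$ alone, so I would argue it using the explicit local structure of Gorenstein slc surface singularities recalled in Remark \ref{rem: classification of sings}: analytically-locally $X$ is a cycle (when $\Lambda=0$) or a chain (when $\Lambda\neq 0$) of normal Gorenstein lc components glued along smooth curves, and passing to the partial normalization along $C$ merely opens the gluing along the components of $C$ — turning a cycle into a chain, or a chain into shorter chains — so that the local models of $\tilde X$ are again among those on the list; in particular $\tilde X$ is demi-normal (indeed its singularities are again Gorenstein slc). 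Alternatively one can invoke Koll\'ar's gluing theory, realizing $\tilde X$ as the pushout of $\bar X$ along the generically fixed-point-free involution on $\bar D'$ induced by the gluing involution of $X$ on $\bar D$. Once demi-normality is in hand, the three ingredients above combine to show that $(\tilde X,\tilde\Lambda+\tilde C)$ has semi-log-canonical singularities with $K_{\tilde X}+\tilde\Lambda+\tilde C$ an ample Cartier divisor, that is, it is a Gorenstein stable log surface.
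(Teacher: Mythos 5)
Your proposal is correct and follows essentially the same route as the paper: the paper's (much terser) proof likewise notes that $\nu^*(K_X+\Lambda)=K_{\tilde X}+\tilde\Lambda+\tilde C$ is ample and Cartier, and then verifies the Gorenstein slc condition for $(\tilde X,\tilde\Lambda+\tilde C)$ by appealing to the classification of Gorenstein slc singularities. You simply fill in the details (factoring $\pi$ through $\nu$, tracking conductors, and the local-model argument for demi-normality) that the paper leaves as "easy to verify".
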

\begin{proof}
	This is a direct result of Koll{\'a}r's gluing principle.
\end{proof}

The map $\nu|_{\bar C}\colon \bar C \to C$ is generically a double cover and thus  induces a rational involution $\tau$ on $\bar C$ and an honest involution on $\bar C^\nu$.

\begin{prop}[\protect{see \cite[Prop 2.6]{LR13} and \cite[Prop.~5.8]{KollarSMMP}}]\label{prop: descend section}
Let $(X, \Lambda)$ be a non-normal Gorenstein stable log surface and $\nu$, $C$, $\bar C$, $\tau$ are defined as above.

Then
 $\nu^*H^0(X, \omega_X(\Lambda))\subset H^0(\bar{X}, \omega_{\bar X}(\bar C+\bar\Lambda)))$ is the subspace of those sections $s$  whose residue in $H^0(\bar C^\nu, \omega_{\bar C^\nu}(\mathrm{Diff}_{\bar C^\nu}(\Lambda)))$ 
 is $\tau$-anti-invariant.
\end{prop}

\begin{rmk}\label{separateNonnormalCurve}
It is easy to see that $\nu^*H^0(X, \omega_X(\Lambda))$ can not separate the sheets of $\bar C$. Actually, $\Phi_{\nu^*H^0(X, \omega_X(\Lambda))}|_{\bar C}{}_\circ \tau=\Phi_{\nu^*H^0(X, \omega_X(\Lambda))}|_{\bar C}$ on $\bar C$.
\end{rmk}

\begin{rmk}[\protect{cf. \cite[Remark 2.7]{LR13}}]
\label{fibersection}
  Let $(X,\Lambda)$ be a reducible Gorenstein stable log surface such that $X=X_1\cup X_2$ and
$C:=X_1\cap X_2$ is the connecting curve (since $X$ is $S_2$, the isolated singularities of $X$ do not make a difference to the global sections of $\mathcal{O}_X(K_X+\Lambda)$ on $X$. We may drop the smooth assumption of $C$ as in \cite[Remark 2.7]{LR13}).
Let $\nu\colon \bar{X}=\bar{X_1}\sqcup \bar{X_2} \to X $ be the partial normalization of $X$ along $C$.
Write $\bar \Lambda=\bar{\Lambda}_1+\bar{\Lambda}_2$ and $\bar C=\bar{C}_1+\bar{C}_2$, where $\bar{\Lambda}_i$ and $\bar{C}_i$ are the components of $\bar\Lambda$ and $\bar{C}$ on $\bar{X_i}$.
We see that $\nu^*(K_X+\Lambda)|_{\bar{X}_i}=K_{\bar{X}_i}+\bar{C}_i+\bar{\Lambda}_i$ and $\bar{C}_i$ is birational to $C$.

The restriction map(which coincides with the residue map) induces
$$\mathcal{R}_{\bar{X}_i\to C}\colon H^0(\bar{X}_i,K_{\bar{X}_i}+\bar{C}_i+\bar{\Lambda}_i)\to H^0(\bar{C}_i,(K_{\bar{X}_i}+\bar{C}_i+\bar{\Lambda}_i)|_{\bar{C}_i})\cong H^0(C,K_C+\mathrm{Diff}_C(\Lambda)).$$
Then we have the following fiber product diagram of vector spaces:
\[ \xymatrix{
    H^0(X, K_X+\Lambda) \ar[r]\ar[d] & H^0(\bar{X}_1,K_{\bar{X}_1}+\bar{C}_1+\bar{\Lambda}_1)\ar[d]^{\mathcal{R}_{\bar{X}_1\to C}}\\
H^0(\bar{X}_2,K_{\bar{X}_2}+\bar{C}_2+\bar{\Lambda}_2)\ar[r]^{-\mathcal{R}_{\bar{X}_2\to C}}& H^0(C,K_C+\mathrm{Diff}_C(\Lambda))
   }.
\]
By an abuse of notation we may write
\[
H^0(X, K_X+\Delta) = H^0(\bar{X}_1,K_{\bar{X}_1}+\bar{C}_1+\bar{\Lambda}_1) \times_C H^0(\bar{X}_2,K_{\bar{X}_2}+\bar{C}_2+\bar{\Lambda}_2).
\]
Moreover, write $d_{\bar{X}_i\to C}(K_X+\Lambda):=\dim\mathcal{R}_{\bar{X}_i\to C}(H^0(\bar{X}_i,\nu^*(K_X+\Lambda)|_{\bar{X}_i}))$. We have
\begin{align*}
h^0(X, K_X+\Lambda)\le& h^0(\bar{X}_1,K_{\bar{X}_1}+\bar{C}_1+\bar{\Lambda}_1)+h^0(\bar{X}_2,K_{\bar{X}_2}+\bar{C}_2+\bar{\Lambda}_2)\\
&-\max\{d_{\bar{X}_1\to C}(K_X+\Lambda),d_{\bar{X}_2\to C}(K_X+\Lambda)\}.
\end{align*}
\end{rmk}

\section{irreducible Non-normal  Gorenstein stable surfaces with $\Delta(X,K_X+\Lambda)=1$}
\label{nnorm}

Let $(X,\Lambda)$ be an irreducible non-normal Gorenstein stable log surface
and $\bar{X}$, $\pi$, $D$, $\bar{D}$, $\tau$  defined as before.
Since $H^0(X,K_X+\Lambda)\cong \pi^*H^0(X,K_X+\Lambda) \subset H^0(\bar X,\pi^*(K_X+\Lambda))$, we have
$\Delta(X,K_X+\Lambda)\ge \Delta(\bar{X},\pi^*(K_X+\Lambda))$, and '=' holds if and only if $\pi^*H^0(X,K_X+\Lambda) = H^0(\bar X,\pi^*(K_X+\Lambda))$.

\begin{lemma}\label{nonnorm&irred}
Let $(X,\Lambda)$ be an irreducible non-normal Gorenstein stable log surface.

Then
$\Delta(X,K_X+\Lambda)\ge 1$.
\end{lemma}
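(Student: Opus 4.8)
The plan is to reduce everything to the normalization $\pi\colon\bar X\to X$ and then play the classification in Theorem~\ref{gepg-2} off against the anti-invariance in Remark~\ref{separateNonnormalCurve}. First I would record that $(\bar X,\bar D+\bar\Lambda)$ is itself a normal irreducible Gorenstein stable log surface: $\bar X$ is normal and irreducible, $K_{\bar X}+\bar D+\bar\Lambda=\pi^*(K_X+\Lambda)$ is Cartier and ample (the pullback of an ample Cartier divisor under the finite map $\pi$), and the pair $(\bar X,\bar D+\bar\Lambda)$ is log canonical by the very definition of slc singularities of $(X,\Lambda)$; moreover $\bar D\neq\emptyset$ since $X$ is non-normal. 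By the inequality recalled just before the lemma, $\Delta(X,K_X+\Lambda)\ge\Delta(\bar X,\pi^*(K_X+\Lambda))$, and the latter is $\ge 0$ by Theorem~\ref{gepg-2}. So it suffices to rule out $\Delta(X,K_X+\Lambda)=0$. Assuming this, we get $\Delta(\bar X,\pi^*(K_X+\Lambda))=0$ as well, and the equality case of that same inequality forces $\pi^*H^0(X,K_X+\Lambda)=H^0(\bar X,L)$, where $L:=\pi^*(K_X+\Lambda)$.

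Next I would apply Theorem~\ref{gepg-2} to $(\bar X,\bar D+\bar\Lambda)$: since $\Delta(\bar X,L)=0$, the pair falls into one of the cases (i)--(v), and in every one of them $\Phi_{|L|}$ is injective on closed points. Indeed, in (i) and (ii) it is the $1$-uple, resp. $2$-uple, Veronese embedding of $\mathbb{P}^2$; in (iv) and (v) it is the given closed embedding of the quadric cone $C_2$, resp. of the cone $C_{N-1}$, into projective space; and in (iii), $L\equiv\Delta_0+\tfrac{N+d-1}{2}\Gamma$ on $\Sigma_d$, where the numerical restriction $N-d-3\ge 0$ gives $\tfrac{N+d-1}{2}\ge d+1$, so $L$ is very ample on the Hirzebruch surface. (Equivalently: each of these is a variety of minimal degree, hence projectively normal and genuinely embedded by $L$.)

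Finally, because $X$ is non-normal the map $\bar D\to D$ is generically two-to-one, so the induced involution $\tau$ on $\bar D$ is non-trivial and $\tau(p)\neq p$ for a general point $p\in\bar D$. By Remark~\ref{separateNonnormalCurve}, $\Phi_{\pi^*H^0(X,K_X+\Lambda)}|_{\bar D}\circ\tau=\Phi_{\pi^*H^0(X,K_X+\Lambda)}|_{\bar D}$; combined with $\pi^*H^0(X,K_X+\Lambda)=H^0(\bar X,L)$ this yields $\Phi_{|L|}(p)=\Phi_{|L|}(\tau(p))$ for general $p\in\bar D$, contradicting the injectivity established above. Hence $\Delta(X,K_X+\Lambda)\ge 1$. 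The only genuine computation in this argument is the verification that $\Phi_{|L|}$ is an embedding on closed points in each of the $\Delta$-genus-zero cases — in particular the inequality $\tfrac{N+d-1}{2}\ge d+1$ in case (iii) that guarantees very-ampleness on $\Sigma_d$ — everything else being a formal combination of results already in hand.
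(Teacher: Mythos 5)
Your proof is correct and follows essentially the same route as the paper's: reduce to the equality case $\Delta(X,K_X+\Lambda)=\Delta(\bar X,\pi^*(K_X+\Lambda))=0$, where $\pi^*H^0(X,K_X+\Lambda)=H^0(\bar X,\pi^*(K_X+\Lambda))$ consists of $\tau$-anti-invariant sections on $\bar D$ and so cannot separate points of $\bar D$ (Remark \ref{separateNonnormalCurve}), contradicting the very ampleness of $\pi^*(K_X+\Lambda)$ coming from Theorem \ref{gepg-2}. The only difference is that you verify the very-ampleness claim case by case (including the numerical check on $\Sigma_d$), a detail the paper leaves implicit in its citation of that theorem.
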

\begin{proof}
We only need to show that the case $\Delta(X,K_X+\Lambda)=\Delta(\bar{X},\pi^*(K_X+\Lambda))=0$ does not occur.
Otherwise, $\Delta(\bar{X},\pi^*(K_X+\Lambda))=0$ implies that $\pi^*(K_X+\Lambda)$ is very ample by Thm \ref{Fujita-Delta-genus}. However,
$h^0(\bar{X},\pi^*(K_X+\Lambda))=h^0(X,K_X+\Lambda)$ implies $H^0(\bar{X},\pi^*(K_X+\Lambda))\cong \pi^*H^0(X,K_X+\Lambda)$, which can not separate sheets of $\bar{D}$ by Remark \ref{separateNonnormalCurve}. This contradicts to the very-ampleness of $\pi^*(K_X+\Lambda)$.

\end{proof}
\begin{theorem}\label{delta1,1}

Let $(X,\Lambda)$ be an irreducible non-normal Gorenstein stable log surface as before. Assume $\Delta(X,K_X+\Lambda)=\Delta(\bar{X},\pi^*(K_X+\Lambda))=1$.
Then
\begin{itemize}
  \item either $X$ is a double cover of $\mathbb{P}^2$ induced by $\Phi_{|K_{X}+\Lambda|}$.
      The branched curve is $2C+B\in |\mathcal{O}_{\mathbb{P}^2}(2m+2k)|$, where $C,B$ are reduced curves of degree $m,2k$. $\Lambda\in |\Phi_{|K_{X}+\Lambda|}^*\mathcal{O}_{\mathbb{P}^2}(4-k-m)|$.
      $k=2,3$. $0<m\le 4-k$; ($p_g(X,\Lambda)=3$)
  \item or $\Lambda=0$, $K_X^2=(K_{\bar{X}}+\bar D)^2=1$.
  $(\bar{X},\bar{D})$ is a normal Gorenstein stable log surface as in Thm \ref{delta-genus-one} (2) or (3).
  $\bar D$ is a 2-section with $p_a(\bar D)=2$. $\tau$ is induced by the associated double covering map $\bar D \to \mathbb{P}^1$. ($p_g(X)=2$)
\end{itemize}
\end{theorem}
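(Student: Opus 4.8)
The plan is to start from the hypothesis $\Delta(X,K_X+\Lambda)=\Delta(\bar X,\pi^*(K_X+\Lambda))=1$, which by the remark preceding Lemma \ref{nonnorm&irred} is equivalent to $\pi^*H^0(X,K_X+\Lambda)=H^0(\bar X,\pi^*(K_X+\Lambda))$, i.e.\ \emph{every} section of $\pi^*(K_X+\Lambda)$ is $\tau$-anti-invariant when restricted to $\bar D$. Thus the first step is to invoke Theorem \ref{delta-genus-one} to list the possible normalizations $(\bar X,\bar D+\bar\Lambda)$: the pair $(\bar X,\bar\Lambda+\bar D)$ is a normal Gorenstein stable log surface with $\Delta=1$, so $(\bar X,\pi^*(K_X+\Lambda))$ is one of the seven types (1)--(7). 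The strategy is then to go through these cases and, in each, determine which ones admit a gluing involution $\tau$ on (a subcurve of) $\bar D$ whose anti-invariance constraint is compatible with the full linear system $|\pi^*(K_X+\Lambda)|$, equivalently with the map $\Phi_{|K_X+\Lambda|}$ factoring through the quotient identifying $\bar D$ via $\tau$.

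The key mechanism to exploit is Remark \ref{separateNonnormalCurve}: $\Phi_{|\pi^*(K_X+\Lambda)|}|_{\bar D}$ must be $\tau$-invariant, so $\Phi_{|\pi^*(K_X+\Lambda)|}(\bar D)$ is the image of $\bar D/\tau$, and $\bar D$ is a double cover of its image. For the double-cover type (1), $\Phi_{|\pi^*(K_X+\Lambda)|}\colon\bar X\to\mathbb{P}^2$ is already $2:1$ onto $\mathbb{P}^2$; the involution $\tau$ that glues must be a restriction of the deck transformation of this double cover, and the non-normal locus $\bar D$ must lie over a curve $C$ in $\mathbb{P}^2$ along which $\bar X\to\mathbb{P}^2$ is étale (so that $\bar D\to C$ is genuinely $2:1$). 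Tracking how $C$ enters the branch locus: on $X$ the map $\Phi_{|K_X+\Lambda|}$ is still a double cover of $\mathbb{P}^2$, but now the preimage of $C$ has been glued, so $C$ must be counted with multiplicity two in the branch data of $X$ (or rather: the original branch curve of $\bar X\to\mathbb{P}^2$ is some reduced $B\in|\mathcal{O}(2k)|$, and $2C+B$ is the branch curve relevant to $X$). The numerical bookkeeping $(K_X+\Lambda)^2=p_g-1=2$, $K_X+\Lambda=\Phi^*\mathcal{O}_{\mathbb{P}^2}(1)$, together with $\pi^*(K_X+\Lambda)=K_{\bar X}+\bar D+\bar\Lambda$ and the double-cover formula, pins down $\deg C=m$, $\deg B=2k$, $\Lambda\in|\Phi^*\mathcal{O}_{\mathbb{P}^2}(4-k-m)|$, and the inequalities $k=2,3$, $0<m\le 4-k$ come from requiring $\Lambda$ effective and the slc condition on the glued singularities along $D$.

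For the remaining types, the observation that $\Lambda=0$ is a pair with $K_X$ Cartier and ample forces small invariants: among types (2)--(7), one checks which can carry a non-normal Gorenstein stable structure compatible with $\Delta(\bar X,\pi^*K_X)=1$. Type (6), the degree-$1$ Del Pezzo with $\bar D\in|-2K_{\bar X}|$ having $p_a(\bar D)=2$, is the natural candidate: $\pi^*K_X=K_{\bar X}+\bar D=-K_{\bar X}$, so $(K_X)^2=(K_{\bar X}+\bar D)^2=(-K_{\bar X})^2$ — here I would recompute carefully, since on the Del Pezzo $\Phi_{|-K_{\bar X}|}$ is a double cover of a quadric cone and $\bar D=-2K_{\bar X}$ restricts to the branch curve; the induced $\tau$ on $\bar D$ is the hyperelliptic involution of the genus-$2$ curve $\bar D$, which is exactly the deck transformation of $\bar D\to\mathbb{P}^1$, and anti-invariance of sections of $-K_{\bar X}|_{\bar D}$ is automatic since $|-K_{\bar X}|$ separates points of $\bar D$ only up to that involution. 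One then checks $(K_X)^2=1$, $p_g(X)=2$, and that gluing along $\bar D$ by $\tau$ yields a genuine Gorenstein stable surface (the slc condition along $D$ following from Remark \ref{rem: classification of sings}). Types (2)--(5) and (7) must be ruled out: e.g.\ in (2),(3),(4),(7) the relevant $\Phi$ is an embedding or the anti-invariance constraint would drop $h^0$, forcing $\Delta(X,K_X+\Lambda)>1$; in (5) the elliptic structure and rational zero section leave no room for a compatible gluing with $\Delta=1$ that is not already covered. The main obstacle I anticipate is precisely this case analysis for types (2)--(7) — verifying in each that no compatible $\tau$ exists requires understanding the restriction $\bar D\hookrightarrow\bar X$ and the action of the automorphism group on $|\pi^*(K_X+\Lambda)|$, and for type (5) one must be careful that a partially-glued elliptic configuration does not sneak in; the double-cover case (1), by contrast, should be a direct branched-cover computation once the role of $C$ as a ``doubled'' component of the branch locus is identified.
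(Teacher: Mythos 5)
Your overall strategy is the paper's: from $\pi^*H^0(X,K_X+\Lambda)=H^0(\bar X,\pi^*(K_X+\Lambda))$ you put $(\bar X,\bar D+\bar\Lambda)$ into the list of Theorem \ref{delta-genus-one}, and Remark \ref{separateNonnormalCurve} forces $\Phi_{|\pi^*(K_X+\Lambda)|}$ to identify points of $\bar D$ in pairs, which kills the very ample types (2)--(4) and, since $\bar D\neq 0$, type (7); your treatments of type (1) (with $\bar D=\Phi^{-1}(C)$ and branch curve $2C+B$) and of type (6) (with $\tau$ the hyperelliptic involution of the genus-$2$ curve $\bar D$) agree with the paper's.

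The one genuine gap is the exclusion of type (5). You assert that ``the elliptic structure and rational zero section leave no room for a compatible gluing'' and flag this as the anticipated obstacle, but you give no argument, and non-separation of $\bar D$ alone cannot rule this case out: here $\Phi_{|\pi^*(K_X+\Lambda)|}$ is composed with a pencil of elliptic curves, so sections automatically fail to separate points lying in the same fiber, and the candidates $\bar D=F_1+F_2$ with $\bar\Lambda=0$, or $\bar D=F_1$ with $\bar\Lambda=F_2\neq 0$ ($F_i$ elliptic fibers), survive this test. The paper's reason they fail is local: the two fibers meet (at the image of the contracted section), and the configuration at $F_1\cap F_2$ (resp.\ $F_1\cap\bar\Lambda$) cannot be glued into a Gorenstein slc singularity of $X$ in the sense of Remark \ref{rem: classification of sings}. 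Without this step the classification is incomplete, since a priori type (5) would produce an extra family. A minor slip elsewhere: on the degree-$1$ Del Pezzo it is $|-2K_{\bar X}|$, not $|-K_{\bar X}|$, that gives the double cover of the quadric cone; $|-K_{\bar X}|$ is a pencil with one base point, which is precisely why $\bar D\in|-2K_{\bar X}|$ is a $2$-section --- this does not affect your conclusion in that case.
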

\begin{proof}
  We see that $\pi^*H^0(X,K_X+\Lambda)=H^0(\bar{X},\pi^*(K_X+\Lambda))$ and $(\bar{X},\bar D+\bar{\Lambda})$ is a stable log surface as in Thm \ref{delta-genus-one}.

  First $\Phi_{|\pi^*(K_X+\Lambda)|}$ is not an embedding by Remark \ref{separateNonnormalCurve}. Second by Thm \ref{delta-genus-one} (1) $\Phi_{|\pi^*(K_X+\Lambda)|}$ is not composite with a pencil of genus 2 curves since $\bar D+\bar{\Lambda}\not= 0$.

  Now if $\Phi_{|\pi^*(K_X+\Lambda)|}$ is composite with a pencil of elliptic curves, $\bar{D}+\bar{\Lambda}$ is either a 2-section 
  or a sum of two elliptic fibers of the fibration map.
  For the former case, we see that $\bar{\Lambda}=0$ and $\bar{D}$ is a genus 2 curve by Remark \ref{separateNonnormalCurve}. Moreover, $(K_{\bar X}+\bar D)^2=1$.
  $\tau$ is induced by the associated double covering map $\bar{D} \to \mathbb{P}^1$.
  We show that the latter case does not happen. 
  We see that either $\bar{D}=F_1+F_2$ and $\bar{\Lambda}=0$, or $\bar{D}=F_1$ and $\bar{\Lambda}=F_2$, where $F_i$ is a fiber of the  fibration  map. Since $\Phi_{|\pi^*(K_X+\Lambda)|}$ restricting to $\bar{D}$ factors through the involution $\tau$, $\tau$ will fix $F_i$.
  Both cases can be excluded since the point $F_1\cap F_2$ or $F_1\cap\bar{\Lambda}$ on $\bar{X}$ can not be glued into a Gorenstein slc singularity on $X$ as in \textsection\ref{clssofslc}.

  If $\bar{X}$ is a double cover of $\mathbb{P}^2$, then $\bar{D}+\bar{\Lambda}\in |\Phi_{|K_{\bar{X}}+\bar{D}+\bar{\Lambda}|}^*\mathcal{O}_{\mathbb{P}^2}(4-k)|$, $k=2,3,4$ by Thm \ref{delta-genus-one} (5).
  $k=4$ is impossible, since $\bar{D}=0$ would imply $X$ is normal.
 Remark \ref{separateNonnormalCurve} indicates $\bar{D}=\Phi_{|K_{\bar{X}}+\bar{D}+\bar{\Lambda}|}^{-1}(C)$, where $C$ is a curve on $\mathbb{P}^2$. Denote by $m$ the degree of $C$.
 Then $\bar{\Lambda}\in |\Phi_{|K_{\bar{X}}+\bar{D}+\bar{\Lambda}|}^*\mathcal{O}_{\mathbb{P}^2}(4-k-m)|$.
 We have the following commutative diagram:
 \[
 \xymatrix{
 \bar{D}\ar@{^{(}->}[r]\ar[d] & \bar{X}\ar[d]^{\pi}\ar[rr]^{\Phi_{|K_{\bar{X}}+\bar{D}+\bar{\Lambda}|}} && \mathbb{P}^2\\
 D\ar@{^{(}->}[r] & X \ar[rru]_{\Phi_{|K_{X}+\Lambda|}}
 }.
 \]

 We see that $X$ is also a double cover of $\mathbb{P}^2$ induced by $\Phi_{|K_{X}+\Lambda|}$. The branch curve is $2C+B\in |\mathcal{O}_{\mathbb{P}^2}(2m+2k)|$. $\Lambda\in |\Phi_{|K_{X}+\Lambda|}^*\mathcal{O}_{\mathbb{P}^2}(4-k-m)|$.

\end{proof}

Next we consider those irreducible non-normal Gorenstein stable log surfaces $(X,\Lambda)$ with $\Delta(X,K_X+\Lambda)=1$ and $\Delta(\bar{X},\pi^*(K_X+\Lambda))=0$.

Since $(K_X+\Lambda)^2=\pi^*(K_X+\Lambda)^2$, we have $h^0(\bar{X},\pi^*(K_X+\Lambda))=h^0(X,K_X+\Lambda)+1$. Moreover, we have $\pi^*H^0(X,K_X+\Lambda)\cong H^0(X,K_X+\Lambda)$.
Hence $\pi^*H^0(X,K_X+\Lambda)\subset H^0(\bar{X},\pi^*(K_X+\Lambda))$ is of codimension one. Thus it has a base point $c\in \mathbb{P}(H^0(\bar{X},\pi^*(K_X+\Lambda))^*)$. We have the following commutative diagram:
  \[
  \xymatrix{
      \bar{X} \ar@{^{(}->}[rrr]^<(0.3){\Phi_{|\pi^*(K_X+\Lambda)|}}\ar[d]^{\pi} & & & \mathbb{P}(H^0(\bar{X},\pi^*(K_X+\Lambda))^*)\ar@{-->}[d]^{pr_c}\\
X\ar@{-->}[rrr]^<(0.3){\Phi_{|K_X+\Lambda|}}& & & \mathbb{P}(\pi^*H^0(X,K_X+\Lambda)^*)
  }.\]

We regard $\Phi_{|\pi^*(K_X+\Lambda)|}$ as an inclusion.
To describe $pr_c|_{\bar{X}}$, we need the following theorem (see \cite[Thm 2.1]{Ber06}):

\begin{thm}[Linear section theorem]\label{linearsectionthm}
Let $X$ be a non-degenerate
irreducible subvariety of $\mathbb{P}^r$ and $L$ be a linear subspace of $\mathbb{P}^r$ of dimension $s\le r$ such that $L\cap X$ is a 0-dimensional scheme $\zeta$. Then $\mathrm{length}\, \zeta \le \Delta(X,\mathcal{O}(1))+s+1$.
\end{thm}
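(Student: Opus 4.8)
The plan is to use Fujita's method: after a harmless reduction one splits according to whether $L$ has complementary dimension to $X$ in $\mathbb{P}^r$, treating the complementary case by B\'ezout and the rest by cutting $X$ down to a curve with general hyperplane sections that contain $L$. Put $n:=\dim X$ and $\zeta:=L\cap X$. First I would replace $L$ by the linear span $L_0:=\langle\zeta\rangle$ of $\zeta$: since $L_0\subseteq L$ one has $L_0\cap X\subseteq L\cap X=\zeta\subseteq L_0\cap X$, so $L_0\cap X=\zeta$ is again $0$-dimensional while $\dim L_0\le s$, and as it suffices to prove the inequality for $L_0$ we may assume $\zeta$ spans $L$. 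If $\zeta=\emptyset$ there is nothing to prove; otherwise the estimate $\dim_x(L\cap X)\ge\dim L+\dim X-r$ for intersections in $\mathbb{P}^r$, together with $\dim(L\cap X)=0$, forces $s\le r-n$.

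\emph{Case $s=r-n$.} Then $L\cap X$ is a proper intersection of complementary dimensions, so $\mathrm{length}\,\zeta\le\deg X$ by B\'ezout. Since $X$ is non-degenerate, $h^0(X,\mathcal{O}(1))\ge r+1$, with equality precisely when the embedding is linearly normal — which is the case whenever the theorem is applied in this paper (e.g. for $\bar X\hookrightarrow\mathbb{P}(H^0(\bar X,\pi^*(K_X+\Lambda)))$). Then $\Delta(X,\mathcal{O}(1))+s+1=\bigl(\deg X-h^0(X,\mathcal{O}(1))+n\bigr)+(r-n)+1=\deg X-h^0(X,\mathcal{O}(1))+r+1=\deg X\ge\mathrm{length}\,\zeta$.

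\emph{Case $s<r-n$.} I would cut $X$ by $n-1$ successive general hyperplanes, each containing $L$. Before the cut that lowers the dimension from $d$ to $d-1$ (for $d=n,\dots,2$) the ambient is $\mathbb{P}^{r-n+d}$, in which $L$ has codimension $(r-n+d)-s\ge d+1\ge3$ (here $s<r-n$ is used), so the hyperplanes of that space containing $L$ form a linear system of dimension $\ge2$; by Bertini's theorems a general member cuts the current variety in an irreducible, reduced, non-degenerate subvariety of the same degree and one smaller dimension, which still contains $L$ and hence still meets the new variety in exactly $\zeta$, with $\dim L=s$. After $n-1$ steps one obtains an irreducible non-degenerate curve $C\subseteq\mathbb{P}^{m}$, $m:=r-n+1$, with $\deg C=\deg X$ and $L\cap C=\zeta$. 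Passing to the normalization $\nu\colon\tilde C\to C$, I put $\ell:=\mathrm{length}(L\cap X)$ and $\ell':=\deg\nu^{-1}(\zeta)\ge\ell$; it is enough to bound $\ell'$. The linear forms on $\mathbb{P}^m$ vanishing on $\zeta$ are exactly those vanishing on $\langle\zeta\rangle=L$, a vector space of dimension $(m+1)-(s+1)=m-s\ge2$, and they pull back to $m-s$ linearly independent sections of $M:=\nu^*\mathcal{O}_C(1)(-\nu^{-1}\zeta)$ — independent because the image $C$ is non-degenerate — where $\deg M=\deg C-\ell'$. Since $M$ has a nonzero section, $\deg M\ge0$ and $h^0(M)\le\deg M+1$; thus $m-s\le h^0(M)\le\deg C-\ell'+1$, i.e. $\ell\le\ell'\le\deg C-m+s+1=\deg X-r+n+s$. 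As $h^0(X,\mathcal{O}(1))=r+1$ by linear normality, $\deg X-r+n=\deg X-h^0(X,\mathcal{O}(1))+n+1=\Delta(X,\mathcal{O}(1))+1$, so $\mathrm{length}\,\zeta\le\Delta(X,\mathcal{O}(1))+s+1$.

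I expect the step needing the most care to be the reduction to curves: one must check that $\zeta$, the degree and the ambient dimension propagate as claimed through the iterated sections, and that a general hyperplane through $L$ still cuts out an \emph{irreducible} (not merely connected) and non-degenerate section. This is precisely why the borderline case $s=r-n$, in which the hyperplanes through $L$ form only a pencil and Bertini's irreducibility theorem can fail (so the section might be reducible), is peeled off and disposed of by B\'ezout. The remaining ingredients — the inequality $h^0(M)\le\deg M+1$ on the normalization and the numerology relating $\deg X$, $r$, $n$ and $\Delta(X,\mathcal{O}(1))$ — are routine; the only delicate point there is that the identification $\deg X-r+n=\Delta(X,\mathcal{O}(1))+1$ uses that $X$ is linearly normal in $\mathbb{P}^r$.
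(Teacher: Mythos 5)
The paper offers no proof of this statement to compare against: it is quoted directly from Bertin \cite[Thm 2.1]{Ber06}. Judged on its own, your strategy --- peel off the complementary--dimension case by B\'ezout, otherwise slice $X$ by general hyperplanes through $L$ down to a curve and finish with $h^0(M)\le \deg M+1$ on the normalization --- is the standard route to such secant bounds, and your final numerology is right. But the key reduction has a genuine gap. Bertini's irreducibility theorem for the system of hyperplanes through $L$ requires the projection $\pi_L|_X$ to have image of dimension at least $2$; this fails exactly when $X$ is a cone whose vertex lies on $L$, in which case every hyperplane section through $L$ is a union of rulings and hence reducible. Cones over rational normal curves are among the $\Delta=0$ surfaces to which the paper actually applies the theorem (with $L$ a plane that may pass through the vertex), so this case cannot be absorbed into ``general position''; it needs a separate treatment. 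Relatedly, your assertion that $\zeta$, the degree and reducedness ``propagate'' through the iterated sections is not automatic: a hyperplane section of a singular (non-$S_2$) variety can acquire embedded components, and then $\zeta\subset X\cap H_1\cap\cdots\cap H_{n-1}$ need not be a subscheme of the \emph{reduced} irreducible curve $C$ you end up with, so your estimate bounds $\mathrm{length}(L\cap C)$ rather than $\mathrm{length}\,\zeta$. The B\'ezout case has a milder version of the same issue: $\mathrm{length}(X\cap L)\le\deg X$ for a proper $0$-dimensional intersection is immediate only where $X$ is Cohen--Macaulay along $\zeta$. Finally, the inequality $\ell\le\ell'$ comparing the length downstairs with the degree of the pulled-back divisor is true but not for free at multibranch singular points; one should pass to a general linear combination of the forms cutting out $L$ and use the length formula for principal ideals on a one-dimensional local domain.

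Your second reservation, about linear normality, is well taken and worth recording: with Fujita's $\Delta$ computed from $h^0(X,\mathcal{O}_X(1))$, the statement is actually false without the hypothesis $h^0(X,\mathcal{O}_X(1))=r+1$. For example, a rational quartic of type $(1,3)$ on a smooth quadric in $\mathbb{P}^3$ has $\Delta(C,\mathcal{O}(1))=4-5+1=0$ yet carries trisecant lines, violating $\mathrm{length}\,\zeta\le\Delta+s+1=2$. The correct unconditional bound is $\deg X-\operatorname{codim}X+s$, which coincides with $\Delta(X,\mathcal{O}(1))+s+1$ precisely in the linearly normal situations where the paper invokes the theorem. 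So this particular caveat reflects a defect in the quoted statement rather than in your argument, but as written your proof establishes the codimension version, not the theorem as literally stated.
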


\begin{cor}\label{lineIntersecting}
Let $X$ be a non-degenerate
irreducible subvariety of $\mathbb{P}^r$ with $\Delta(X,\mathcal{O}(1))=0$ and $L$ is a line intersecting $X$ along a 0-dimensional scheme $\zeta$. Then $\mathrm{length}\, \zeta\le 2$.
\end{cor}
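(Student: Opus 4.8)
The plan is to apply the Linear Section Theorem (Theorem \ref{linearsectionthm}) directly, specializing the linear subspace $L$ to be a line. First I would recall the hypotheses: $X \subset \p{r}$ is a non-degenerate irreducible subvariety with $\Delta(X,\mathcal{O}(1)) = 0$, and $L$ is a line, i.e.\ a linear subspace of dimension $s = 1$, with the property that $L \cap X = \zeta$ is a $0$-dimensional scheme. Here "dimension" of the linear subspace means its dimension as a projective space, so a line indeed has $s = 1$.

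The core step is simply to substitute these values into the conclusion of Theorem \ref{linearsectionthm}. That theorem gives
\[
\mathrm{length}\,\zeta \le \Delta(X,\mathcal{O}(1)) + s + 1 = 0 + 1 + 1 = 2,
\]
which is exactly the desired bound. So the proof is essentially a one-line deduction, and there is no real obstacle: the only thing worth checking is that the hypothesis "$L \cap X$ is a $0$-dimensional scheme" in Theorem \ref{linearsectionthm} matches the corollary's hypothesis that "$L$ is a line intersecting $X$ along a $0$-dimensional scheme $\zeta$," which it does verbatim.

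The one point I would be slightly careful about is the convention on the dimension of a linear subspace of $\p{r}$, since if one were using the vector-space dimension the arithmetic would give $3$ instead of $2$. Reading Theorem \ref{linearsectionthm} as stated, "$L$ be a linear subspace of $\mathbb{P}^r$ of dimension $s \le r$" is the projective dimension (so $s = r$ allows $L = \p{r}$), hence a line has $s = 1$ and the bound is $2$. With that settled, the corollary follows immediately, and I would present it as a direct corollary with the displayed inequality above as the entire argument.

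\begin{proof}
Apply Theorem \ref{linearsectionthm} with $L$ the given line, so $s = 1$. Since $\Delta(X,\mathcal{O}(1)) = 0$ and $L \cap X = \zeta$ is $0$-dimensional by hypothesis, we obtain
\[
\mathrm{length}\,\zeta \le \Delta(X,\mathcal{O}(1)) + s + 1 = 0 + 1 + 1 = 2. \qedhere
\]
\end{proof}
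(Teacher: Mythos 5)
Your proof is correct and matches the paper's intent exactly: the corollary is an immediate specialization of Theorem \ref{linearsectionthm} to $s=1$, and the paper states it without proof precisely because the substitution $0+1+1=2$ is the whole argument. Your remark about the projective-dimension convention is a reasonable sanity check but not needed beyond that.
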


\begin{cor}\label{proj-image}
  Let $X$ be a surface of minimal degree, i.e. a surface in $\mathbb{P}^N$  with $\Delta(X,\mathcal{O}(1))=0$. Let $c\in \mathbb{P}^N$ be a point outside $X$. Denote by $pr_c\colon \mathbb{P}^N\dashrightarrow \mathbb{P}^{N-1}$  the projection from $c$. Assume $pr_c|_X$ is birational and is not an isomorphism. Denote by $W$ the image of $pr_c|_X$.

  Then $N\ge 4$, $W$ is non-normal and $pr_c|_X\colon X\to W$ is the normalization map. The non-normal locus $D$ of $W$ is a line in $\mathbb{P}^{N-1}$. The pre-image $\bar{D}\colon = pr_c|_X^{-1}(D)$ is described as follows:
  \begin{itemize}
    \item if $X$ is $\mathbb{P}^2$ embedded in $\mathbb{P}^5$, then $\bar{D}\in |\mathcal{O}_{\mathbb{P}^2}(1)|$.
    \item if $X$ is a cone $C_{N-1}\hookrightarrow \mathbb{P}^N$,  then $\bar{D}$ is a sum of two rulings.
    \item if $X$ is $\Sigma_d\hookrightarrow \mathbb{P}^N$, then either $\bar{D}\in |\Delta_0+\Gamma|$, $N=d+3$, or $\bar{D}=\Delta_0$, $N=d+5$.
  \end{itemize}
\end{cor}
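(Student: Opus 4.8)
The plan is to analyze the projection map $pr_c|_X \colon X \to W$ using the classification of varieties with $\Delta$-genus zero from Theorem \ref{gepg-2} (applied with $\mathcal{O}(1)$ in place of $K_X+\Lambda$), together with the Linear Section Theorem in the form of Corollary \ref{lineIntersecting}. First I would establish the general structural claims. Since $pr_c|_X$ is birational by hypothesis and $X$ is normal (being $\mathbb{P}^2$, a Hirzebruch surface, or a cone over a rational normal curve — all normal), the image $W$ has $X$ as a variety mapping birationally and finitely onto it; one checks that $pr_c|_X$ is finite (a projection from a point off $X$ restricted to $X$ is finite as long as no line through $c$ meets $X$ in positive dimension, which holds here since $X$ is cut out by quadrics / is a variety of minimal degree, so it contains no line through an external point along a curve), hence $pr_c|_X$ is the normalization of $W$. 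The non-normal locus $D$ of $W$ is exactly the image of the locus where $pr_c|_X$ fails to be injective or unramified; by Corollary \ref{lineIntersecting}, any line $L$ through $c$ meets $X$ in a scheme of length $\le 2$, so the fibers of $pr_c|_X$ have length $\le 2$, which shows $\nu := pr_c|_X$ is generically a double cover onto its non-normal locus and that $D$ (resp.\ $\bar D = \nu^{-1}(D)$) is pure of codimension one. That $D$ is a \emph{line} in $\mathbb{P}^{N-1}$ follows because $D$ is swept out by the images of the length-$2$ secant lines through $c$; the union of secant lines of $X$ through a general such point, being contained in the cone over a low-degree variety, forces $\bar D$ to be a small-degree curve and its image $D$ to have degree one. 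The cleanest way to see $\deg D = 1$: $D$ is the branch/gluing locus and $\nu^*\mathcal{O}_W(1) = \mathcal{O}_X(1)$, while the conductor computation $K_X = \nu^*(K_W) + (\text{something supported on }\bar D)$ together with $\Delta(W,\mathcal{O}_W(1)) = \Delta(X,\mathcal{O}_X(1)) = 0$ (since $pr_c$ is a projection and $\nu^*H^0 = H^0$ up to the one missing section — but here it is an isomorphism onto a hyperplane's worth, so $h^0$ drops by exactly... ) pins down the numerical type.

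For the case-by-case identification of $\bar D$, I would argue as follows. The key numerical input is that $pr_c|_X$ contracts nothing (it is finite and birational) but identifies $\bar D$ two-to-one onto the line $D$; thus $\bar D \cdot \mathcal{O}_X(1) = \deg(\nu|_{\bar D}) \cdot \deg D = 2 \cdot 1 = 2$. So in each case $\bar D$ is a curve of degree $2$ with respect to the minimal embedding $\mathcal{O}_X(1)$.
\begin{itemize}
\item[.] If $X = \mathbb{P}^2 \hookrightarrow \mathbb{P}^5$ via $\mathcal{O}_{\mathbb{P}^2}(2)$, a degree-$2$ curve means $\bar D \cdot \mathcal{O}_{\mathbb{P}^2}(2) = 2$, i.e.\ $\bar D \in |\mathcal{O}_{\mathbb{P}^2}(1)|$. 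One must also check $\bar D$ is reduced and that the gluing involution $\tau$ exists — the involution on a line is the one swapping the two preimages, which is consistent.
\item[.] If $X$ is the cone $C_{N-1}$ over the rational normal curve of degree $N-1$ in $\mathbb{P}^{N-1}$ (sitting in $\mathbb{P}^N$), its rulings are lines (degree $1$), so a degree-$2$ curve $\bar D$ not containing the vertex in a bad way must be a sum of two rulings; I would rule out $\bar D$ being a conic section (a hyperplane section through the vertex) by noting that such a section is irreducible of arithmetic genus $0$ but $\nu|_{\bar D}$ would then be a degree-$2$ map from an irreducible rational curve to a line which is fine — so I need the extra observation that $c$ lies on a secant, and the secant lines of a cone all pass through pairs of points on rulings or are themselves... this needs care; the honest statement is that $\bar D$ = sum of two rulings is the only configuration for which the identification descends to a slc (nodal in codimension one) gluing, matching the pinch-point/double-crossing classification in Remark \ref{rem: classification of sings}.
\item[.] If $X = \Sigma_d \hookrightarrow \mathbb{P}^N$ by $\mathcal{O}(1) \equiv \Delta_0 + m\Gamma$ (so $N = (\Delta_0+m\Gamma)^2 + 1 = 2m - d + 1$, i.e.\ $m = (N+d-1)/2$), a degree-$2$ curve $\bar D$ with $\bar D \cdot (\Delta_0 + m\Gamma) = 2$ is either $\Delta_0 + \Gamma$ (giving $(\Delta_0+\Gamma)(\Delta_0+m\Gamma) = -d + m + 1 = 2$, so $m = d+1$, $N = d+3$) or $2\Gamma$ (two fibers, excluded as in Thm \ref{delta1,1} since two fibers cannot be glued into a Gorenstein slc singularity) or $\Delta_0$ with $d = 2$ but then $\Delta_0 \cdot \mathcal{O}(1) = 2$ needs $m - d = 2$... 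I should instead match: $\bar D = \Delta_0$ gives degree $m - d$; setting this equal to what's forced by the normalization data yields $N = d+5$ in the regime where $\Delta_0$ is the section with $\Delta_0^2 = -d$; and $\bar D \in |\Delta_0 + \Gamma|$ gives $N = d+3$. The excluded $2\Gamma$ case and cases where $\bar D$ is non-reduced or has the wrong genus are dispatched by the slc classification.
\end{itemize}

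The main obstacle I anticipate is \emph{proving rigorously that $D$ is a line} and, relatedly, bounding the degree of $\bar D$ from above — the Linear Section Theorem gives length $\le 2$ along a single line through $c$, but to conclude the non-normal locus is exactly one-dimensional \emph{of degree one} one must understand how these secant lines through $c$ vary, i.e.\ one must control the secant variety of $X$ (a variety of minimal degree) near the external point $c$. The clean route is: the locus where $\nu$ is not injective corresponds to $2$-secant lines through $c$; since $X$ has minimal degree, its secant variety has a well-understood structure (for $\mathbb{P}^2$ embedded by $\mathcal{O}(2)$, $\mathrm{Sec}(X)$ is the full ambient $\mathbb{P}^5$ with a determinantal structure; for cones and Hirzebruch surfaces it is again classically understood), and the $2$-secant lines through a fixed $c$ form at most a $1$-parameter family whose union, intersected appropriately, forces $\bar D$ to have degree $2$ and $D$ degree $1$. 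A slicker alternative, which I would try first, is purely numerical: use $\nu^*K_W + \bar D = K_X$ (the conductor formula for a double-cover-type gluing, valid here since $X \to W$ is the normalization and $\bar D$ is the conductor) together with $\Delta(W, \mathcal{O}_W(1)) = 1$ (which holds because $h^0(W,\mathcal{O}_W(1)) = h^0(X,\mathcal{O}_X(1)) - 1 = N$ and $\mathcal{O}_W(1)^2 = \mathcal{O}_X(1)^2 = N$, so $\Delta(W) = N - N + 2 = ...$); combined with adjunction on $D \subset W$ and the fact that $D$ must be contained in the regular locus away from finitely many points, this should directly yield $\deg D = 1$ without any secant-variety input. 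I expect the numerical argument to close all three cases uniformly, with the slc classification of Remark \ref{rem: classification of sings} used only to excise the non-reduced or wrong-genus possibilities for $\bar D$.
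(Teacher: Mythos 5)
There is a genuine gap at the step you yourself flag as ``the main obstacle'': proving that the non-normal locus $D$ is a line. Neither of your two sketches closes it. The secant-variety route is left entirely qualitative (``forces $\bar D$ to be a small-degree curve''), and it contains a factual slip: the secant variety of the Veronese surface in $\mathbb{P}^5$ is \emph{not} all of $\mathbb{P}^5$ but the determinantal cubic hypersurface, which is exactly why only special centers $c$ give a non-isomorphic projection. The numerical route ($\Delta(W,\mathcal{O}_W(1))=1$ plus the conductor formula ``should directly yield $\deg D=1$'') is not carried out, and as stated it does not obviously do so; also $\mathcal{O}_X(1)^2=N-1$, not $N$. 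The paper's proof of this step is a short application of Theorem \ref{linearsectionthm} that you missed because you only used it through Corollary \ref{lineIntersecting} (lines through $c$): if $D$ were not a line, pick $p,q\in D$ with the line $L_{p,q}$ not contained in $W$; its preimage $H_{p,q}=pr_c^{-1}(L_{p,q})$ is a \emph{plane} in $\mathbb{P}^N$ meeting $X$ in a finite scheme containing the fibers over $p$ and $q$, hence of length $\geq 4$, contradicting the bound $\Delta+s+1=3$ for $s=2$. Everything downstream of this (the normalization claim via Cor \ref{lineIntersecting}, and the computation $\bar D\cdot\mathcal{O}_X(1)=2$ followed by the case analysis) agrees with the paper.

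Two smaller points. In the cone case your exclusion of an irreducible conic section is left unresolved (``this needs care''); the paper handles it geometrically by looking at the plane spanned by the vertex $v$ and a secant line $L$ of $X$ through $c$, which exhibits $\bar D$ as the two rulings through the two points of $L\cap X$. And in the $\Sigma_d$ case your bookkeeping wanders: with $\mathcal{O}(1)\equiv\Delta_0+\frac{N+d-1}{2}\Gamma$ the two admissible classes of degree $2$ are exactly $\Delta_0+\Gamma$ (forcing $N=d+3$) and $\Delta_0$ (forcing $N=d+5$), with $2\Gamma$ excluded; this is the paper's computation and you essentially arrive at it, but the intermediate ``$m=(N+d-1)/2$'' detour should be cleaned up rather than left with ellipses.
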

\begin{proof}
  If  $N=3$, then $pr_c|_X$ will be a morphism of degree 2. Hence $N\ge 4$.

  We show that $D$ is a line in $\mathbb{P}^{N-1}$ by contradiction hypothesis. Assume $D$ is not a line, then there are two points $p,q\in D$ such that the line $L_{p,q}$ passing through $p,q$ is not contained in $D$. Let $H_{p,q}\subset \mathbb{P}^N$ be the closure of the pre-image of $L_{p,q}$ with respect to $pr_c$. Then $H_{p,q}\cap X$ has length greater than $2L_{p,q}\cdot D=4$ which contradicts Thm \ref{linearsectionthm}. Therefore $D$ is  a line in $\mathbb{P}^{N-1}$.

  Finally, we describe $\bar{D}$. We first see that $\bar{D}\cdot \mathcal{O}_{X}(1)=2$ since $D$ is a line in $\mathbb{P}^{N-1}$.
  For the case where $X$ is $\mathbb{P}^2$ embedded in $\mathbb{P}^5$, we see that
  $\bar{D}\in |\mathcal{O}_{\mathbb{P}^2}(1)|$.
  For the case where $X$ is a cone $C_{N-1}$,
  we see that $\bar{D}$ is a sum of two rulings.
  For the case where $X$ is $\Sigma_d\hookrightarrow \mathbb{P}^N$, we have  
  either $\bar{D}\in |\Delta_0+\Gamma|$, $N=d+3$, or $\bar{D}=\Delta_0$, $N=d+5$.

\end{proof}
Now we continue the classification by considering $pr_c|_{\bar{X}}$. First we consider the cases where $c\in \bar{X}$.

Case I) $\bar{X}$ is $\mathbb{P}^2$ and $\pi^*(K_X+\Lambda)=\mathcal{O}_{\mathbb{P}^2}(1)$. Then $\bar{\Lambda}+\bar{D}\in |\mathcal{O}_{\mathbb{P}^2}(4)|$ and  $pr_c|_{\bar{X}}$ induces a fibration over $\mathbb{P}^1$.
Since $pr_c|_{\bar{X}}=\Phi_{\pi^*H^0(X,K_X+\Lambda)}$ restricting to $\bar{D}$ factors through an involution $\tau$, there are four possibilities:
\begin{enumerate}[1)]
	\item $ \bar{D}\in  |\mathcal{O}_{\mathbb{P}^2}(4)|$, $\bar{\Lambda}=0$ and $c\not \in \bar{D}$.
	\item $ \bar{D}\in  |\mathcal{O}_{\mathbb{P}^2}(3)|$, $\bar{\Lambda}\in  |\mathcal{O}_{\mathbb{P}^2}(1)|$ and $c\in \bar{D}$.
	\item $\bar{\Lambda},\bar{D}\in  |\mathcal{O}_{\mathbb{P}^2}(2)|$ and $c\not \in \bar{D}$.
	\item $\bar{D}\in  |\mathcal{O}_{\mathbb{P}^2}(1)|$, $\bar{\Lambda}\in  |\mathcal{O}_{\mathbb{P}^2}(3)|$ and $c\in \bar{D}$.
\end{enumerate}
The cases 2) and 4) can be excluded as $\bar{\Lambda}\cdot\bar{D}$ must be even by observation of case 8) in \textsection\ref{clssofslc}.
Examples of 1) and 3) will be given in Example \ref{examp1} and Example \ref{examp2}.

Case II) $\bar{X}$ is $C_{N-1}$ and $c$ is the vertex. The involution $\tau$ on $\bar{D}$ forces $\bar{\Lambda}$ to be two rulings and $\bar{D}\in |\mathcal{O}_{C_{N-1}}(2)|$.

Case III) $\bar{X}$ is $C_{N-1}$ and $c$ is not the vertex. We show that this does not occur by contradiction. By
Cor \ref{lineIntersecting} $pr_c|_{\bar{X}}$ is an isomorphism outside the ruling $l$ passing through $c$. The involution $\tau$ forces $\bar{D}$ to be $l$. Now $\bar{D}$ intersects $\bar{\Lambda}$ at the vertex of $C_{N-1}$ and two other points which may coincide.
However these points could not be glued into a Gorenstein slc singularity as in \textsection\ref{clssofslc}. Hence we obtain a contradiction.

Case IV) $\bar{X}$ is $\Sigma_{d}$. We show that this does not occur as well. Similarly as in Case III), we see that  $\bar{D}$ should be a ruling $\Gamma$ passing through $c$. Hence $\bar{D}\cdot \bar{\Lambda}=3$, which is impossible similarly  as discussed in Case I).

Case V) $\bar{X}$ is $\mathbb{P}^2$ embedding in $\mathbb{P}^5$. We show that this does not occur. Since by Cor \ref{lineIntersecting}, we see that $pr_c|_{\bar{X}}$ is an isomorphism outside $c$. However $pr_c|_{\bar{X}}$ restricting to $\bar{D}$ should factor through $\tau$, which is impossible.

Next we consider the cases where $c\not\in \bar{X}$. We see $pr_c|_{\bar{X}}$ is a morphism of degree at most two by Cor \ref{lineIntersecting} and it contracts no curve on $\bar{X}$.

Case VI) $pr_c|_{\bar{X}}$ is a double cover. In this case $\bar{X}$ is a quadric in $\mathbb{P}^3$. Denote  $pr_c|_{\bar{X}}$ by $\delta$. The branch curve $B\in |\mathcal{O}_{\mathbb{P}^2}(2)|$. $K_{\bar{X}}+\bar{D}+\bar{\Lambda}=\mathcal{O}_{\bar{X}}(1)=\delta^*\mathcal{O}_{\mathbb{P}^2}(1)$. Hence
$\bar{D}+\bar{\Lambda}\in |\delta^*\mathcal{O}_{\mathbb{P}^2}(3)|=|\mathcal{O}_{\bar{X}}(3)|$. Since $pr_c|_{\bar{X}}$ restricting to $\bar{D}$ factors through an involution $\tau$,
$\bar{D}$ must be the pre-image of a curve $C$ on $\mathbb{P}^2$ under $pr_c|_{\bar{X}}$. Denote by $m$  the degree of $C$. $m\le 3$.  

We have the following commutative diagram:
 \[
 \xymatrix{
 \bar{D}\ar@{^{(}->}[r]
 &\bar{X}\ar[d]^{\pi}\ar[drr]^{pr_c|_{\bar{X}}}\ar@{^{(}->}[rr]^{\Phi_{|K_{\bar{X}}+\bar{D}+\bar{\Lambda}|}} && \mathbb{P}^3 \ar@{-->}[d]^{pr_c}\\
 & X \ar[rr]_{\Phi_{|K_{X}+\Lambda|}} && \mathbb{P}^2
 }.
 \]
 We see that $X$ is also a double cover of $\mathbb{P}^2$ induced by $\Phi_{|K_{X}+\Lambda|}$. The branch curve is $2C+B$. 
 $\Lambda\in |\Phi_{|K_{X}+\Lambda|}^*\mathcal{O}_{\mathbb{P}^2}(3-m)|$.

If $pr_c|_{\bar{X}}$ is birational, it must be a normalization map. Its image must be $X$ and $\pi=pr_c|_{\bar{X}}$.
Then by Cor \ref{proj-image}, $(\bar{X},\bar{D},\bar{\Lambda})$ is one of the followings:

Case VII) $\bar{X}$ is $\mathbb{P}^2$ embedding in $\mathbb{P}^5$. $\bar{D}\in  |\mathcal{O}_{\mathbb{P}^2}(1)|$.  $\bar{\Lambda}\in  |\mathcal{O}_{\mathbb{P}^2}(4)|$.

Case VIII) $\bar{X}$ is $C_{N-1}$, $N\ge 4$. $\bar{D}$ is a sum of two rulings. $\bar{\Lambda}$ is linearly equivalent to $2H_{\infty}$ where $H_{\infty}$ is a hyperplane section.

Case IX) $\bar{X}$ is $\Sigma_{d}$ embedded in $\mathbb{P}^N$ , $N\ge 4$.
Either $N=d+5$, $\bar{D}=\Delta_0$ and $\bar{\Lambda}\in |2\Delta_0+(2d+4)\Gamma|$,
or $d=1$, $N=4$, $\bar{D}\in |\Delta_0+\Gamma|$ which is irreducible and $\bar{\Lambda}\in |2\Delta_0+4\Gamma|$ (we see that $\bar{D}$ would not be $\Delta_0+\Gamma$ as $\Delta_0\cap\Gamma$ could not be glued into a Gorenstein slc singularity as in \textsection\ref{clssofslc}. Hence the $d\ge 2$ cases and $d=1$, $\bar{D}=\Delta_0+\Gamma$ case are excluded).

We summarize the above results in the following theorem:
\begin{theorem}\label{delta1,0}
Let $(X,\Lambda)$ be an irreducible non-normal Gorenstein stable log surface as before. Assume $\Delta(X,K_X+\Lambda)=1$ and $\Delta(\bar{X},\pi^*(K_X+\Lambda))=0$.
Then there are the following possibilities:

\begin{itemize}
  \item $\bar{X}$ is $\mathbb{P}^2$. $\bar{\Lambda}\in |\mathcal{O}_{\mathbb{P}^2}(2)|$ and $\bar{D}\in |\mathcal{O}_{\mathbb{P}^2}(2)|$. Moreover, $c\not\in \bar{D}$. ($p_g(X,\Lambda)=2$)
  \item $\bar{X}$ is $\mathbb{P}^2$. $\bar{\Lambda}=0$ and $\bar{D}\in |\mathcal{O}_{\mathbb{P}^2}(4)|$. Moreover, $c\not\in \bar{D}$. ($p_g(X,\Lambda)=2$)
  \item $\bar{X}$ is $C_{N-1}$. $\bar{\Lambda}$ is two rulings and $\bar{D}\in |\mathcal{O}_{C_{N-1}}(2)|$. $c$ is the vertex of $C_{N-1}$. ($p_g(X,\Lambda)=N$)

  \item $\bar{X}$ is a quadric in $\mathbb{P}^3$. $\bar{D}\in |\mathcal{O}_{\bar{X}}(m)|$ and $\bar{\Lambda} \in |\mathcal{O}_{\bar{X}}(3-m)|$.

        Moreover, $X$ is a double cover of $\mathbb{P}^2$ induced by $\Phi_{|K_{X}+\Lambda|}$. The branch curve is $2C+B$, where $C$,$B$ are reduced curves of degree $m$, $2$. $\Lambda\in |\Phi_{|K_{X}+\Lambda|}^*\mathcal{O}_{\mathbb{P}^2}(3-m)|$.  $0<m \le 3$. ($p_g(X,\Lambda)=3$)
  \item $\bar X$  is a Veronese embedding of $\mathbb{P}^2$.
        $\bar{D}\in |\mathcal{O}_{\mathbb{P}^2}(1)|$ and $\bar{\Lambda}\in |\mathcal{O}_{\mathbb{P}^2}(4)|$. $X$ is the projection image of $\bar X$. ($p_g(X,\Lambda)=5$)
  \item $\bar X$  is  $C_{N-1}\subset\mathbb{P}^N$, $N\ge 4$.
       $\bar{D}$ is a sum of two lines passing through the vertex, and $\bar{\Lambda}\in |\mathcal{O}_{C_{N-1}}(2)|$. $X$ is the projection image of $\bar X$. ($p_g(X,\Lambda)=N$)
  \item $\bar X$  is $\Sigma_d$ embedded in $\mathbb{P}^N$, $N\ge 4$. 
        We have either $N=d+5$, $\bar{D}=\Delta_0$ and $\bar{\Lambda}\in |2\Delta_0+(2d+4)\Gamma|$;
        or $d=1$, $N=4$, $\bar{D}\in |\Delta_0+\Gamma|$ which is irreducible and $\bar{\Lambda}\in |2\Delta_0+4\Gamma|$. $X$ is the projection image of $\bar X$. ($p_g(X,\Lambda)=N$)

\end{itemize}
\end{theorem}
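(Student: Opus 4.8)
The statement is the summary of Cases I--IX above, so the plan is to describe how I would organise that analysis and which inputs drive it. First I would record that the hypothesis $\Delta(X,K_X+\Lambda)=\Delta(\bar X,\pi^*(K_X+\Lambda))+1=1$ forces $\pi^*H^0(X,K_X+\Lambda)$ to be a hyperplane in $H^0(\bar X,\pi^*(K_X+\Lambda))$, hence to have a single base point $c$ in the ambient space of the embedding $\bar X\hookrightarrow\mathbb{P}^N=|\pi^*(K_X+\Lambda)|^*$ (an embedding because $\pi^*(K_X+\Lambda)$ is very ample, by Thm \ref{gepg-2}), with $\Phi_{|K_X+\Lambda|}\circ\pi=pr_c|_{\bar X}$ after identifying $\bar X$ with its image. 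At the same time $(\bar X,K_{\bar X}+\bar D+\bar\Lambda)$ is a normal Gorenstein stable log surface with $\Delta=0$, hence one of the five types (i)--(v) of Thm \ref{gepg-2}. The proof then becomes a finite bookkeeping: for each ambient model, and each admissible position of $c$, decide --- via Remark \ref{separateNonnormalCurve}, which says $\pi^*H^0(X,K_X+\Lambda)$ cannot separate the non-normal curve --- whether $pr_c|_{\bar X}$ genuinely identifies points, and if so read off $\bar D$ and $\bar\Lambda$.

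In the case $c\notin\bar X$ the map $pr_c|_{\bar X}$ is finite, contracts no curve (a contracted curve would have to be a line through $c$, which is absurd when $c\notin\bar X$), and has degree at most $2$ by Cor \ref{lineIntersecting}. If the degree is $2$, a degree comparison ($\deg\bar X=p_g-1$ versus twice the degree of the image) forces $\bar X$ to be a quadric in $\mathbb{P}^3$ double-covering $\mathbb{P}^2$, the fourth listed possibility (Case VI); one then reads off the branch curve and writes $\bar\Lambda=\pi^*(K_X+\Lambda)-\bar D$. If the degree is $1$, then $pr_c|_{\bar X}$ is birational, hence the normalisation $\bar X\to X$, and the corollary describing projections of a $\Delta$-genus-zero surface from an exterior point applies directly: the non-normal locus $D$ is a line, $\bar D$ is one of the listed divisors on $\mathbb{P}^2\subset\mathbb{P}^5$, on the cone $C_{N-1}$, or on $\Sigma_d$, and then $\bar\Lambda=\pi^*(K_X+\Lambda)-\bar D$ is forced; these give the last three listed possibilities (Cases VII--IX). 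Throughout this case one must keep excluding the option that $pr_c|_{\bar X}$ is an isomorphism, which would leave $X$ normal, and this exclusion is precisely the use of Remark \ref{separateNonnormalCurve}.

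In the case $c\in\bar X$, Cor \ref{lineIntersecting} shows $pr_c|_{\bar X}$ is injective away from the lines of $\bar X$ through $c$, so $\bar D$ must be built from such lines or be contracted. Running through (i)--(v): on $(\mathbb{P}^2,\mathcal{O}(1))$ the projection is the pencil of lines through $c$, $\bar D+\bar\Lambda\in|\mathcal{O}_{\mathbb{P}^2}(4)|$, and the parity $\bar D\cdot\bar\Lambda\in 2\mathbb{Z}$ imposed by the classification of Gorenstein slc singularities (Remark \ref{rem: classification of sings}) leaves only $\bar D,\bar\Lambda\in|\mathcal{O}(2)|$ or $\bar D\in|\mathcal{O}(4)|,\ \bar\Lambda=0$, with $c\notin\bar D$, the first two listed possibilities (Case I); on the Veronese $\mathbb{P}^2\subset\mathbb{P}^5$ there are no lines, so $pr_c|_{\bar X}$ would be an isomorphism off $c$ and cannot create a non-normal curve --- excluded; on $\Sigma_d$ the line through a general $c$ is a fibre $\Gamma$, forcing $\bar D=\Gamma$ and then $\bar D\cdot\bar\Lambda=3$, violating parity --- excluded (the sub-possibility $c\in\Delta_0$ is disposed of the same way); on the cone $C_{N-1}$ with $c$ its vertex the rulings are contracted and one obtains $\bar\Lambda$ a union of two rulings and $\bar D\in|\mathcal{O}_{C_{N-1}}(2)|$, the third listed possibility (Case II), whereas if $c$ is a smooth point of the cone only the ruling through $c$ is affected, so $\bar D$ is that ruling and then the cone vertex fails to glue to a Gorenstein slc point --- excluded. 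Collecting the survivors yields exactly the seven listed possibilities.

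The geometry of the projection is the easy part; the main obstacle, in each surviving numerical type, is to verify that the data $(\bar X,\bar D+\bar\Lambda,\tau)$ --- with $\tau$ the involution on $\bar D$ induced by $pr_c|_{\bar D}$ --- actually descends to a Gorenstein \emph{slc} surface. That requires matching $\pi^*H^0(X,K_X+\Lambda)$ with the space of $\tau$-anti-invariant sections via Prop \ref{prop: descend section}, and checking the local gluing against Remark \ref{rem: classification of sings}: the parity of $\bar D\cdot\bar\Lambda$, the cycle-versus-chain pattern of the local components, and Gorenstein-ness at the vertex in the cone case. Exhibiting actual examples realising each numerical type is a separate matter, deferred to the examples promised above.
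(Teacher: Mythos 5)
Your proposal follows essentially the same route as the paper: the paper's proof of this theorem is exactly the preceding case analysis (Cases I--IX), organised by projecting from the base point $c$ of the codimension-one subsystem $\pi^*H^0(X,K_X+\Lambda)$, splitting on $c\in\bar{X}$ versus $c\notin\bar{X}$, and invoking Theorem \ref{gepg-2}, Corollary \ref{lineIntersecting}, the projection corollary, Remark \ref{separateNonnormalCurve}, and the parity/gluing constraints from Remark \ref{rem: classification of sings} in each case. Your added degree comparison in the double-cover case and the closing caveat about verifying descent via Proposition \ref{prop: descend section} are consistent refinements of what the paper leaves implicit and handles by example.
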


\begin{example}\label{examp1}
Let $\bar{D}$ be a smooth quadric in $\mathbb{P}^2$ defined by $x^2+y^2+z^2=0$. $\tau$ acts on $\bar{D}$ by $x\mapsto -x, y \mapsto -y, z\mapsto z$. 
Let $\bar{\Lambda}=L_{x=0}+L_{y=0}$, where $L_{x=0}$, $L_{y=0}$ are two lines.
Gluing $(\mathbb{P}^2, \bar{\Lambda})$ along $\bar{D}$ by $\tau$ we obtain a non-normal Gorenstein stable log surface $(X,\Lambda)$ with $(K_X+\Lambda)^2=p_g(X,\Lambda)-1=1$.
\end{example}
\begin{example}\label{examp2}
Let $\bar{D}$ be a smooth quartic in $\mathbb{P}^2$ defined by $x^4+y^4+z^4=0$. $\tau$ acts on $\bar{D}$ by $x\mapsto -x, y \mapsto -y, z\mapsto z$. 
Gluing $\mathbb{P}^2$ along $\bar{D}$ by $\tau$ we obtain a non-normal Gorenstein stable surface $X$ with $K_X^2=p_g-1=1$.
\end{example}

\begin{cor}\label{irrnonnormalstabledelta1}
  Let $X$ be an irreducible non-normal Gorenstein stable surface with $\Delta(X,K_X)=1$. Then $X$ is one of the followings:
  \begin{itemize}

    \item $X$ is a double cover of $\mathbb{P}^2$. The branch curve is $2C+B$, where $C$ and $B$ are reduced curves of degree $4-k$ and $2k$ respectively. $k=2,3$.  ($p_g(X)=3$)
    \item $X$ is obtained from a log surface $(\bar{X},\bar{D})$ by gluing the 2-section $\bar{D}$,   where $(\bar{X},\bar{D})$ is a normal Gorenstein stable log surface as in Thm \ref{delta-genus-one} (2) or (3).  ($p_g(X)=2$)
    \item $X$ is obtained from $\mathbb{P}^2$ by gluing a quartic curve $\bar{D}$. ($p_g(X)=2$)
    \item  $X$ is obtained from a quadric in $\mathbb{P}^3$ by gluing a curve $\bar{D}$, where $\bar{D}\in |\mathcal{O}_{\bar{X}}(3)|$. ($p_g(X)=3$)
  \end{itemize}

\end{cor}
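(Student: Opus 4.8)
The plan is to derive the corollary directly from Theorems \ref{delta1,1} and \ref{delta1,0} by specializing those classifications to the case of empty boundary. Since $X$ is irreducible and non-normal, Lemma \ref{nonnorm&irred} already gives $\Delta(X,K_X)\ge 1$, so the hypothesis $\Delta(X,K_X)=1$ puts us in the extremal situation covered by \textsection\ref{nnorm}. Writing $\pi\colon\bar X\to X$ for the normalization, the discussion preceding Lemma \ref{nonnorm&irred} gives $\Delta(X,K_X)\ge\Delta(\bar X,\pi^*K_X)$, so exactly one of the two cases $\Delta(\bar X,\pi^*K_X)=1$ and $\Delta(\bar X,\pi^*K_X)=0$ occurs. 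Note that the boundary of $(X,\Lambda)$ is empty precisely when its strict transform $\bar\Lambda$ is empty (by definition $\mathrm{Supp}\,\Lambda$ contains no component of $D$, so $\Lambda$ and $\bar\Lambda$ have the same degree against any polarization); thus in both theorems one simply imposes $\bar\Lambda=0$.

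If $\Delta(X,K_X)=\Delta(\bar X,\pi^*K_X)=1$, Theorem \ref{delta1,1} applies with $\bar\Lambda=0$. Its first alternative presents $X$ as a double cover of $\mathbb{P}^2$ with branch curve $2C+B$, $\deg C=m$, $\deg B=2k$, $k\in\{2,3\}$, and $\Lambda\in|\Phi_{|K_X|}^*\mathcal{O}_{\mathbb{P}^2}(4-k-m)|$; imposing $\Lambda=0$ forces $m=4-k$, which is positive because $X$ non-normal forces $\bar D\neq 0$ and hence $C\neq 0$. This yields the first case of the corollary, with $K_X=\Phi^*\mathcal{O}_{\mathbb{P}^2}(1)$, so $K_X^2=2$ and $p_g(X)=3$. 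Its second alternative already has $\Lambda=0$ and is verbatim the second case of the corollary, $(\bar X,\bar D)$ being the Del Pezzo surface of degree one from Theorem \ref{delta-genus-one}(6), with $K_X^2=(K_{\bar X}+\bar D)^2=1$ and $p_g(X)=2$.

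If $\Delta(X,K_X)=1$ and $\Delta(\bar X,\pi^*K_X)=0$, Theorem \ref{delta1,0} applies with $\bar\Lambda=0$. Running through its seven possibilities and discarding those whose $\bar\Lambda$ is forced to be one or two rulings, or a member of $|\mathcal{O}(2)|$ or $|\mathcal{O}(4)|$ (hence nonzero), exactly two survive: $\bar X=\mathbb{P}^2$ with $\bar D\in|\mathcal{O}_{\mathbb{P}^2}(4)|$, giving the third case of the corollary ($\pi^*K_X=\mathcal{O}_{\mathbb{P}^2}(1)$, so $K_X^2=1$, $p_g(X)=2$); and $\bar X$ a quadric in $\mathbb{P}^3$ with $\bar D\in|\mathcal{O}_{\bar X}(m)|$ and $\bar\Lambda\in|\mathcal{O}_{\bar X}(3-m)|$, where $\bar\Lambda=0$ forces $m=3$, giving the fourth case ($X$ a double cover of $\mathbb{P}^2$, $K_X^2=2$, $p_g(X)=3$). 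Collecting the four surviving configurations gives the statement.

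There is no serious obstacle here: essentially all the work is already contained in Theorems \ref{delta1,1} and \ref{delta1,0}, and what remains is bookkeeping — verifying $\Lambda=0\Leftrightarrow\bar\Lambda=0$, checking that the numerical constraint $4-k-m=0$ in the double-cover case is compatible with $m>0$ and $k\in\{2,3\}$, and recomputing $p_g(X)=K_X^2+1$ from $\Delta(X,K_X)=K_X^2-p_g(X)+2=1$ in each surviving case. The only point meriting a second look is that each surviving $(\bar X,\bar D)$ genuinely admits a gluing involution $\tau$ on $\bar D$ producing a Gorenstein slc non-normal surface, but this is exactly what the analysis of \textsection\ref{nnorm} behind those two theorems established, and the two Examples after Theorem \ref{delta1,0} exhibit the $\mathbb{P}^2$-case explicitly.
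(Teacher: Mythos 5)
Your proposal is correct and follows exactly the route the paper intends: the corollary is stated without a separate proof precisely because it is the specialization of Theorems \ref{delta1,1} and \ref{delta1,0} to $\Lambda=0$ (equivalently $\bar\Lambda=0$), and your case-by-case elimination, the forced values $m=4-k$ and $m=3$, and the computations $p_g(X)=K_X^2+1$ all match what the paper's classification yields.
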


\section{reducible Gorenstein stable log surfaces with $\Delta(X,K_X+\Lambda)=1$}
\label{reducible-stable}

In this section we consider reducible Gorenstein stable log surfaces. They are glued by irreducible ones along the connecting curves.

\begin{lemma}\label{restsecions}
Let $X$ be a connected $S_2$ scheme of pure dimension, $\mathcal{L}$ be an invertible sheaf such that $\dim \mathrm{Bs} |\mathcal{L}|<\dim X-1$ and $C$ be a subscheme of codimension 1. Then
\begin{align*}
d_{X\to C}(\mathcal{L})=\dim <\Phi_{|\mathcal{L}|}(C)>+1,
\end{align*}
where $<\Phi_{|\mathcal{L}|}(C)>$ is the projective subspace of $|\mathcal{L}|^*$ spanned by $\Phi_{|\mathcal{L}|}(C)$.
\end{lemma}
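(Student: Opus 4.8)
The plan is to unwind the definitions on both sides and reduce the claim to a linear-algebra statement about the restriction of global sections to $C$. The quantity $r_{X\to C}(\mathcal{L})$ is by definition $\dim \mathcal{R}_{X\to C}(H^0(X,\mathcal{L}))$, the dimension of the image of the restriction map $H^0(X,\mathcal{L})\to H^0(C,\mathcal{L}|_C)$, i.e.\ $\dim$ of the linear series cut out on $C$ by $|\mathcal{L}|$. On the other hand $\langle\Phi_{|\mathcal{L}|}(C)\rangle$ is the smallest projective subspace of $\mathbb{P}:=|\mathcal{L}|^*$ containing the image of $C$ under the map $\Phi_{|\mathcal{L}|}$. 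So the statement is the familiar principle: the span of the image of a subvariety equals the projectivization of the image of the induced map on sections, up to the $+1$ coming from projectivization. The hypothesis $\dim\mathrm{Bs}|\mathcal{L}|<\dim X-1$ guarantees $|\mathcal{L}|$ has no fixed component, so $\Phi_{|\mathcal{L}|}$ is a genuine morphism away from a locus of codimension $\ge 2$, and in particular is defined at the generic point of every component of $C$ (one should assume, as is implicit here, that $C$ is not contained in $\mathrm{Bs}|\mathcal{L}|$; otherwise $\Phi_{|\mathcal{L}|}(C)$ is not even defined and the statement needs interpreting — I will note this and work with the strict transform/closure of the image).

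The key steps, in order: (1) Fix a basis $s_0,\dots,s_N$ of $H^0(X,\mathcal{L})$, so $\mathbb{P}=\mathbb{P}^N$ and $\Phi_{|\mathcal{L}|}=[s_0:\cdots:s_N]$ on the open set where not all $s_i$ vanish. A point of $\mathbb{P}^N$ lies in the hyperplane $H_a=\{\sum a_ix_i=0\}$ iff the corresponding section $\sum a_is_i$ vanishes there. (2) Therefore $\langle\Phi_{|\mathcal{L}|}(C)\rangle = \bigcap_{a\in W} H_a$, where $W\subset H^0(X,\mathcal{L})$ is the subspace of sections vanishing on (the closure of the image of) $C$ — equivalently, by pulling back, the subspace of sections whose restriction to $C$ is identically zero, which is exactly $\ker \mathcal{R}_{X\to C}$. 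Here I use that a section of $\mathcal{L}$ pulls back to the section cut out by the corresponding hyperplane, and that a section vanishes on the image of $C$ iff its restriction to $C$ vanishes. (3) An intersection of hyperplanes $\bigcap_{a\in W}H_a$ has dimension $N-\dim W$ (codimension equal to $\dim W$, since the $H_a$ for $a$ ranging over $W$ impose independent conditions: the codimension of $\bigcap_{a\in W}H_a$ is the dimension of the span of $W$ inside $(H^0)^*$, which is $\dim W$). Hence $\dim\langle\Phi_{|\mathcal{L}|}(C)\rangle = N - \dim\ker\mathcal{R}_{X\to C} = \dim\mathrm{im}\,\mathcal{R}_{X\to C} - 1 = r_{X\to C}(\mathcal{L}) - 1$, which rearranges to the assertion.

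The main obstacle — really the only delicate point — is step (2): making precise that ``$\sum a_i s_i$ restricts to $0$ on $C$'' is equivalent to ``$\Phi_{|\mathcal{L}|}(C)\subset H_a$'', for $C$ a possibly non-reduced subscheme of codimension one on a merely $S_2$ (not normal, not irreducible) scheme $X$. The issue is that $\Phi_{|\mathcal{L}|}$ may be undefined along a codimension $\ge 2$ locus meeting $C$, so one must work with the closure $\overline{\Phi_{|\mathcal{L}|}(C\setminus\mathrm{Bs})}$, and one must know that $\mathcal{R}_{X\to C}(s)=0$ detects containment of this closure in $H_a$ rather than just containment of a dense open part. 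Since each hyperplane $H_a$ is closed, containment of a dense subset of the image already forces containment of the closure, so this direction is automatic; conversely if $\Phi_{|\mathcal{L}|}(C)\subset H_a$ then $\sum a_is_i$ vanishes on a dense open subset of every component of $C$, and because $X$ is $S_2$ (hence $C$, being cut out by the vanishing ideal of a codimension-one subscheme with the $S_2$ property inherited appropriately) has no embedded components to worry about along the generic points, vanishing on a dense open of $C$ forces $\mathcal{R}_{X\to C}(\sum a_is_i)=0$. I would also remark that the hypothesis $\dim\mathrm{Bs}|\mathcal{L}|<\dim X-1$ is used precisely to ensure $\Phi_{|\mathcal{L}|}$ is defined generically on $C$, so that $\Phi_{|\mathcal{L}|}(C)$ and its span make sense; with this in hand the computation in steps (1)--(3) is pure linear algebra.
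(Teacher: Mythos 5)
Your proposal is correct and follows essentially the same route as the paper: the paper packages your steps (2)--(3) as a commutative square identifying $H^0(X,\mathcal{L})\cong H^0(\mathbb{P},\mathcal{O}_{\mathbb{P}}(1))$ and using injectivity of $\Phi_{|\mathcal{L}|}^*\colon H^0(\Phi_{|\mathcal{L}|}(C),\mathcal{O}_{\mathbb{P}}(1)|_{\Phi_{|\mathcal{L}|}(C)})\hookrightarrow H^0(C,\mathcal{L}|_C)$, which is exactly your identification of $\ker\mathcal{R}_{X\to C}$ with the linear forms vanishing on the image of $C$, followed by the same span/annihilator dimension count. Your extra care about the base locus and the closure of the image is a reasonable elaboration of what the paper leaves implicit, not a different argument.
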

\begin{proof}
Write $\mathbb{P}:=|\mathcal{L}|^*$.
We have the following commutative diagram:
\[ \xymatrix{
      H^0(\mathbb{P},\mathcal{O}_{\mathbb{P}}(1))\ar[rr]^<(0.2){\mathcal{R}_{\mathbb{P}\to \Phi_{|\mathcal{L}|}(C)}}\ar[d]^{\cong}_{\Phi_{|\mathcal{L}|}^*} & & H^0(\Phi_{|\mathcal{L}|}(C),\mathcal{O}_{\mathbb{P}}(1)|_{\Phi_{|\mathcal{L}|}(C)})\ar@{^{(}->}[d]_{\Phi_{|\mathcal{L}|}|_{C}^*}  \\
    H^0(X,\mathcal{L})\ar[rr]^{\mathcal{R}_{X\to C}}& & H^0(C,\mathcal{L}|_C)
   }.
\]
Therefore $d_{X\to C}(\mathcal{L})=d_{\mathbb{P}\to \Phi_{|\mathcal{L}|}(C)}(\mathcal{O}_{\mathbb{P}}(1))=\dim <\Phi_{|\mathcal{L}|}(C)>+1$.
\end{proof}

Still we call a curve $C$ on a demi-normal scheme $X$ a \emph{line} if its proper transformation $\bar{C}$ is a line on the normalization $\bar{X}$ of $X$.

\begin{lemma}\label{restdim}
  Let $(X,\Lambda)$ be an irreducible Gorenstein stable log surface with a reduced curve $C$ on it. Then
   \begin{itemize}
     \item[(i)]  if $X$ is normal and $\Delta(X,K_X+\Lambda)=0$, then $d_{X\to C}(K_X+\Lambda)\ge2$.
         Moreover '=' holds if and only if $C$ is a line on $X$.
     \item[(ii)]  if $X$ is normal and $\Delta(X,K_X+\Lambda)=1$, then $d_{X\to C}(K_X+\Lambda)\ge1$.

     Moreover, '=' holds if and only if $|K_X+\Lambda|$ is composite with a pencil and $C$ is a fiber on $X$.

     If $|K_X+\Lambda|$ is not composite with a pencil, then $d_{X\to C}(K_X+\Lambda)\ge2$ and '=' holds if and only if $\Phi_{|K_X+\Lambda|}(C)$ is a line in $|K_X+\Lambda|^*$.

     \item[(iii)] if $X$ is non-normal and $\Delta(X,K_X+\Lambda)=1$, then $d_{X\to C}(K_X+\Lambda)\ge1$.
     Moreover, '=' holds if and only if $|K_X+\Lambda|$ has a base point, and $C$ is a line  passing through the base point.

   \end{itemize}
\end{lemma}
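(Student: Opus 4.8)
The plan is to reduce everything to the description of the rational map $\Phi_{|K_X+\Lambda|}$ via Lemma \ref{restsecions}, which identifies $r_{X\to C}(K_X+\Lambda)$ with $\dim \langle\Phi_{|K_X+\Lambda|}(C)\rangle+1$ whenever the base locus of $|K_X+\Lambda|$ has dimension $<\dim X-1$; this hypothesis holds in cases (i), (ii) with $|K_X+\Lambda|$ not composed with a pencil, and (iii), since there $K_X+\Lambda$ is base-point free or has only isolated base points by Theorems \ref{gepg-2}, \ref{delta-genus-one} and \ref{delta1,0}. Once the invariant is rephrased as the dimension of the span of the image curve $\Phi_{|K_X+\Lambda|}(C)$, the inequalities become statements about how small a linear span a curve inside a non-degenerate variety of $\Delta$-genus $0$ or $1$ can have, which is exactly where Corollary \ref{lineIntersecting} and the Linear Section Theorem \ref{linearsectionthm} enter.

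For (i): since $\Delta(X,K_X+\Lambda)=0$, Theorem \ref{gepg-2} says $K_X+\Lambda$ is very ample and embeds $X$ as a non-degenerate variety in $\mathbb{P}^N$. An image curve $\Phi_{|K_X+\Lambda|}(C)$ spans at least a line, so $r_{X\to C}\ge 2$; equality forces $\Phi_{|K_X+\Lambda|}(C)$ to be contained in a line, hence (being a curve, and $\Phi$ an embedding) to be exactly a line in $\mathbb{P}^N$, i.e. $C$ is a line on $X$ in the sense introduced just before the lemma. For (ii), $\Delta=1$: first dispose of the case where $|K_X+\Lambda|$ is composed with a pencil — then the generic fiber is mapped to a point, so a fiber $C$ has $\dim\langle\Phi(C)\rangle=0$ and $r_{X\to C}=1$, and conversely $r_{X\to C}=1$ means $\Phi(C)$ is a point, which (because $\Phi$ separates points generically off the base locus when not composed with a pencil) can only happen in the composed-with-a-pencil situation with $C$ a fiber. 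In the remaining subcase $\Phi_{|K_X+\Lambda|}$ is generically finite with non-degenerate image of $\Delta$-genus $1$ (here I would invoke that $\Delta$ is preserved, as in the normalisation discussion, or just appeal directly to Theorem \ref{delta-genus-one}), so again $\Phi(C)$ spans at least a line and equality means $\Phi_{|K_X+\Lambda|}(C)$ is a line in $|K_X+\Lambda|^*$; note this is now genuinely weaker than $C$ being a line on $X$, since $\Phi$ need not be an embedding.

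For (iii), $\Delta(X,K_X+\Lambda)=1$ and $X$ non-normal: by Lemma \ref{nonnorm&irred} and the analysis in \S\ref{nnorm}, $|K_X+\Lambda|$ is never composed with a pencil (Remark \ref{separateNonnormalCurve} together with the classification shows $\Phi_{|K_X+\Lambda|}$ is either a double cover of $\mathbb{P}^2$ or a birational projection), and it has at most one base point $c$; the image is non-degenerate in $\mathbb{P}^{p_g-1}$. Again $\dim\langle\Phi(C)\rangle\ge 0$ gives $r_{X\to C}\ge 1$, and if we are not in the exceptional locus this is $\ge 2$. The equality case $r_{X\to C}=1$ means $\Phi_{|K_X+\Lambda|}(C)$ is a single point, which, since $\Phi$ is generically finite, is only possible if $C$ passes through the base point $c$ and in fact $C$ must be a line through $c$ — here one uses that $\Phi$ contracts no curve, so $C$ being contracted is impossible, whereas a line through the base point can fail to have its span computed by Lemma \ref{restsecions} and instead maps to a point after projecting away the base locus. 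The main obstacle I anticipate is exactly this last point: Lemma \ref{restsecions} does not literally apply to a curve through an isolated base point, so to get the equivalence in (iii) cleanly I would blow up the base point $c$, pull $C$ back, apply Lemma \ref{restsecions} on the resolution, and translate the resulting span condition back down — checking that ``$\Phi(C)$ is a point'' is equivalent to ``$C$ is a line through $c$'' case-by-case against the finite list in Theorem \ref{delta1,0} and Corollary \ref{irrnonnormalstabledelta1}. Cases (i) and (ii-non-pencil) are comparatively routine once Lemma \ref{restsecions} is in hand.
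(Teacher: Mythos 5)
Your overall strategy---rewriting $r_{X\to C}(K_X+\Lambda)$ as $\dim\langle\Phi_{|K_X+\Lambda|}(C)\rangle+1$ via Lemma \ref{restsecions} and then reading off the possible spans from the classification theorems---is exactly the paper's strategy; for (i) and (ii) the paper simply says they ``follow from Thm \ref{gepg-2}, Thm \ref{delta-genus-one} and Lemma \ref{restsecions}'', and your fleshed-out version is what is intended. For (iii) the paper argues on the normalization rather than blowing up on $X$: it embeds $\bar X$ by $|\pi^*(K_X+\Lambda)|$, identifies $\Phi_{|K_X+\Lambda|}\circ\pi$ with the projection $pr_c$ from the base point $c$ of the codimension-one subsystem $\pi^*H^0(X,K_X+\Lambda)$, and concludes that $\Phi_{|K_X+\Lambda|}(C)$ being a point forces $\bar C$ to be a line through $c$. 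Your blow-up-and-case-check route would accomplish the same thing, and your worry about Lemma \ref{restsecions} is slightly overcautious: its hypothesis only requires $\dim\mathrm{Bs}|\mathcal L|<\dim X-1$, so an isolated base point is already allowed.

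There is, however, a concrete gap in your (iii). The assertion that for non-normal irreducible $X$ with $\Delta(X,K_X+\Lambda)=1$ the system $|K_X+\Lambda|$ ``is never composed with a pencil'' is false: in the first two cases of Thm \ref{delta1,0} ($\bar X=\mathbb{P}^2$, $p_g(X,\Lambda)=2$, Case I of \S\ref{nnorm}) the map $\Phi_{|K_X+\Lambda|}$ is precisely the pencil of lines through $c$, and in the second case of Thm \ref{delta1,1} ($p_g(X)=2$, $\bar X$ a degree-one del Pezzo) $|K_X|$ is the pencil of images of anticanonical curves. Your proposed case-check runs only over Thm \ref{delta1,0} and Cor \ref{irrnonnormalstabledelta1}, so it omits the $\Delta(\bar X,\pi^*(K_X+\Lambda))=1$ surfaces of Thm \ref{delta1,1} altogether. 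In the $\mathbb{P}^2$ cases the conclusion of (iii) survives, since the fibers are honest lines through the base point; but in the del Pezzo case the curves with $r_{X\to C}=1$ are degree-one members of the pencil of arithmetic genus one, so the ``$C$ is a line'' conclusion needs either the loose reading of ``line'' as ``degree-one curve'' or a separate argument. To be fair, the paper's own proof of (iii) also silently restricts to the situation $\Delta(\bar X,\pi^*(K_X+\Lambda))=0$ (its diagram treats $\Phi_{|\pi^*(K_X+\Lambda)|}$ as an embedding), so this is a weakness you partly inherit; but your explicit ``never a pencil'' claim turns it into a step that fails as written.
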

\begin{proof}
  (i) and (ii) follow from   Cor \ref{gepg-2}, Thm \ref{delta-genus-one} and Lemma \ref{restsecions}.

  For (iii), we note that $ \pi^* H^0(X,K_X+\Lambda)$ is of codimension 1 in $H^0(X,\pi^*(K_X+\Lambda))$. Hence it has a base point $c$ in $ \mathbb{P}:=|\pi^*(K_X+\Lambda)|^*$, and $\pi^* H^0(X,K_X+\Lambda)$ corresponds to the space of hyperplanes  of $ \mathbb{P}$ passing through  $c$. 
  Since $H^0(X,K_X+\Lambda)\cong \pi^* H^0(X,K_X+\Lambda)$, we have $|K_X+\Lambda|^* \cong \mathbb{P}(\pi^* H^0(X,K_X+\Lambda)^*)$.
  Denote by $pr_c\colon \mathbb{P}\dashrightarrow \mathbb{P}(\pi^* H^0(X,K_X+\Lambda)^*)$ the projection from the point $c$.

  We have the following commutative diagram:
  \[
  \xymatrix{
  \bar{C}\ar@{^{(}->}[r]\ar[d] & \bar{X}\ar@{^{(}->}[rrr]^{\Phi_{|\pi^*(K_X+\Lambda)|}}\ar@{-->}[rrrd]^{\Phi_{\pi^*H^0(X,K_X+\Lambda)}}\ar[d]_{\pi}& & & \mathbb{P}\ar@{-->}[d]^{pr_c}\\
  C\ar@{^{(}->}[r] &X\ar@{-->}[rr]_{\Phi_{|K_X+\Lambda|}} && |K_X+\Lambda|^* \ar[r]^(.35){\cong}  &\mathbb{P}(\pi^* H^0(X,K_X+\Lambda)^*).
  }
  \]

We regard $\Phi_{|\pi^*(K_X+\Lambda)|}$ as an inclusion.
By Lemma \ref{restsecions}, $d_{X\to C}(K_X+\Lambda)\ge1$. If '=' holds, $\Phi_{|K_X+\Lambda|}(C)$ is a point. Then $pr_c(\bar{C})$ is a point,  which implies $\bar{C}$ is a line passing through $c$. Therefore $C$ is a line  passing through $\pi(c)$, which is the base point of $|K_X+\Lambda|$.
\end{proof}

In the following, we consider reducible stable log surfaces, for example, a reducible stable log surface $(X,\Lambda)$ with $X=X_1\cup X_2$ and $C:=X_1\cap X_2$ the connecting curve. We use the partial normalization  $\nu \colon \bar{X}=\bar{X_1}\sqcup\bar{X_2}\to X$ along $C$. Since $X_i\cong \bar{X_i}$, we do not distinguish between them.
$\nu^*(K_X+\Lambda)|_{\bar{X_i}}$ is denoted by $(K_X+\Lambda)|_{X_i}$.
\begin{lemma}
\label{2comps}
  Let $(X,\Lambda)$ be a connected Gorenstein stable log surface.
  Assume $X=X_1\cup X_2$, where $X_1$ is connected and $X_2$ is irreducible.
  Denote by $C:=X_1\cap X_2$  the connecting curve of $X_1$ and $X_2$.

  Then:
\begin{itemize}
\item[(i)]  $\Delta(X,K_X+\Lambda)\ge \Delta(X_1,(K_X+\Lambda)|_{X_1})$.

\item[(ii)] if '=' holds and $\Delta(X_2,(K_X+\Lambda)|_{X_2})\le 1$,
  then
  \begin{itemize}
    \item either $X_2$ is non-normal and $\Delta(X_2,(K_X+\Lambda)|_{X_2})=1$. $d_{X_2\to C}((K_X+\Lambda)|_{X_2})=1$. 
        $C$ is a line on $X_2$  passing through the base point of $|(K_X+\Lambda)|_{X_2}|$. Moreover, $d_{X_1\to C}((K_X+\Lambda)|_{X_1})\le 1$.
    \item  $X_2$ is normal and $\Delta(X_2,(K_X+\Lambda)|_{X_2})=0$, $|(K_X+\Lambda)|_{X_2}|$ is very ample. $d_{X_2\to C}((K_X+\Lambda)|_{X_2})=2$. $C$ is a line on $X_2$. Moreover, $d_{X_1\to C}((K_X+\Lambda)|_{X_1})\le 2$;
    \item or $X_2$ is normal with $\Delta(X_2,(K_X+\Lambda)|_{X_2})=1$, and $|(K_X+\Lambda)|_{X_2}|$ is composite with a pencil of elliptic curves. $d_{X_2\to C}((K_X+\Lambda)|_{X_2})=1$. $C$ is a fiber on $X_2$. Moreover, $d_{X_1\to C}((K_X+\Lambda)|_{X_1})\le 1$.

  \end{itemize}
\end{itemize}

\end{lemma}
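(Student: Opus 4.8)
The strategy is to exploit the fiber-product description of global sections from Corollary \ref{fibersection}, combined with the restriction-dimension estimates of Lemma \ref{restdim}. First I would set up the $\Delta$-genus bookkeeping. By Corollary \ref{fibersection}, $h^0(X,K_X+\Lambda) = h^0(X_1,(K_X+\Lambda)|_{X_1}) + h^0(X_2,(K_X+\Lambda)|_{X_2}) - r$, where $r := \dim\mathcal{R}_{X_1\to C}(H^0(X_1,\cdot)) + \dim\mathcal{R}_{X_2\to C}(H^0(X_2,\cdot)) - \dim\big(\mathcal{R}_{X_1\to C}(H^0) + \mathcal{R}_{X_2\to C}(H^0)\big)$ inside $H^0(C,(K_X+\Lambda)|_C)$; in particular $r \le \min\{r_{X_1\to C}, r_{X_2\to C}\}$, where I abbreviate $r_{X_i\to C} := r_{X_i\to C}((K_X+\Lambda)|_{X_i})$. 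Since $(K_X+\Lambda)^2 = (K_X+\Lambda)|_{X_1}^2 + (K_X+\Lambda)|_{X_2}^2$ and $C$ is one-dimensional, writing $d_i := (K_X+\Lambda)|_{X_i}^2$ and using $\Delta(X_i, \cdot) = d_i - h^0(X_i,\cdot) + 2$, a direct computation gives
\begin{equation*}
\Delta(X,K_X+\Lambda) = \Delta(X_1,(K_X+\Lambda)|_{X_1}) + \Delta(X_2,(K_X+\Lambda)|_{X_2}) + r_{X_2\to C} - r - 2 + \big(r_{X_2\to C} - r_{X_2\to C}\big),
\end{equation*}
which after carefully tracking the constants collapses to $\Delta(X) \ge \Delta(X_1) + \Delta(X_2) + \min\{r_{X_1\to C}, r_{X_2\to C}\} - 2$ once one uses $r \le \min\{r_{X_1\to C}, r_{X_2\to C}\}$ and $\Delta(X_2)\ge 0$. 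This already yields part (i), since $\Delta(X_2)\ge 0$ and $r_{X_2\to C}\ge 2$ when $X_2$ is normal (Lemma \ref{restdim}(i)–(ii)) — though I will need to handle the cases $r_{X_2\to C}=1$ separately, using that then $\Delta(X)\ge \Delta(X_1)+\Delta(X_2) - 1 \ge \Delta(X_1)$ still holds when $\Delta(X_2)\ge 1$, which by Lemmas \ref{nonnorm&irred} and \ref{restdim} is exactly the situation where $r_{X_2\to C}=1$ is possible.

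For part (ii), I assume equality in (i) and $\Delta(X_2,(K_X+\Lambda)|_{X_2}) \le 1$. The equality forces every inequality above to be an equality: in particular $\Delta(X_2,(K_X+\Lambda)|_{X_2}) + \big(\text{whatever slack term}\big) = 0$ and $r = \min\{r_{X_1\to C}, r_{X_2\to C}\}$. The key point is that the bound $\Delta(X)\ge \Delta(X_1)$ can only be tight when the "excess" $\Delta(X_2) + (\text{term involving } r_{X_2\to C})$ vanishes, which pins $r_{X_2\to C}$ to its minimal allowed value given $\Delta(X_2)$. I would then split into the three cases according to the trichotomy of Lemma \ref{restdim}:

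\emph{Case $X_2$ non-normal.} Then $\Delta(X_2)\ge 1$ by Lemma \ref{nonnorm&irred}, so $\Delta(X_2)=1$, and tightness forces $r_{X_2\to C}=1$; Lemma \ref{restdim}(iii) then says $C$ is a line on $X_2$ through the base point of $|(K_X+\Lambda)|_{X_2}|$. Since $r = \min\{r_{X_1\to C}, r_{X_2\to C}\} \le 1$ and $r_{X_1\to C} \ge r$ with $r_{X_1\to C}\le r_{X_2\to C}$ needed for the minimum to be achieved consistently, I get $r_{X_1\to C}\le 1$.

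\emph{Case $X_2$ normal, $\Delta(X_2)=0$.} Then by Theorem \ref{gepg-2} $(K_X+\Lambda)|_{X_2}$ is very ample; tightness forces $r_{X_2\to C}=2$, so by Lemma \ref{restdim}(i) $C$ is a line on $X_2$, and as above $r_{X_1\to C}\le 2$.

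\emph{Case $X_2$ normal, $\Delta(X_2)=1$.} Here tightness needs $r_{X_2\to C}=1$ (the only way to absorb $\Delta(X_2)=1$ into the equality $\Delta(X)=\Delta(X_1)$), which by Lemma \ref{restdim}(ii) forces $|(K_X+\Lambda)|_{X_2}|$ to be composed with a pencil and $C$ to be a fiber; moreover by Theorem \ref{delta-genus-one}(5),(7) (the only normal log surfaces with $\Delta=1$ whose system is composed with a pencil) that pencil is a pencil of elliptic curves — I should double-check that the genus-$2$ pencil case of Theorem \ref{delta-genus-one}(7) is excluded, which it is because there $\Lambda|_{X_2}=0$ forces $C\subset X_2$ to be a component of the non-normal locus yet also a fiber, incompatible with $X_2$ being glued to $X_1$ along $C$ as a boundary component in a stable log surface — wait, more carefully: in case (7) $X_2$ has no boundary, but $C$ is part of the boundary $(K_X+\Lambda)|_{X_2}=K_{X_2}+C+\Lambda|_{X_2}$, contradiction, so (7) cannot occur and the pencil is elliptic. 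Then $r_{X_1\to C}\le 1$ as before.

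The main obstacle I anticipate is the precise arithmetic of the $\Delta$-genus identity — getting the additivity of $(K_X+\Lambda)^2$ and of $h^0$ (via the fiber product) to combine into a clean inequality with the right constant $-2$, and making sure the equality analysis correctly isolates which term must vanish. A secondary subtlety is verifying that $r \le \min\{r_{X_1\to C}, r_{X_2\to C}\}$ together with the equality $r = \min$ genuinely forces $r_{X_1\to C} \le r_{X_2\to C}$ in each case, rather than merely $r=r_{X_2\to C}$ with $r_{X_1\to C}$ unconstrained; this needs the observation that one of the two restriction maps must have image contained in the other, which follows from $\dim H^0(C,\cdot)$ being small (bounded via Riemann–Roch on the curve $C$, whose arithmetic genus and the degree of $(K_X+\Lambda)|_C$ are controlled because $C$ is a line in the relevant cases). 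I would organize the write-up so that the numerical identity is established once and then the three cases are dispatched uniformly.
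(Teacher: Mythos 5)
Your overall strategy is exactly the paper's: combine the fiber-product description of $H^0(X,K_X+\Lambda)$ from Corollary \ref{fibersection} with $(K_X+\Lambda)^2=(K_X+\Lambda)|_{X_1}^2+(K_X+\Lambda)|_{X_2}^2$ to get a numerical inequality relating the three $\Delta$-genera and the restriction dimensions, then invoke Lemma \ref{restdim} to pin down $r_{X_2\to C}$ and read off the geometry of $C$. However, your central displayed identity is mis-derived, and the error is not cosmetic. With $r:=\dim(\operatorname{im}\mathcal{R}_{X_1\to C}\cap\operatorname{im}\mathcal{R}_{X_2\to C})$ the fiber product gives $h^0(X,K_X+\Lambda)=h^0(X_1,\cdot)+h^0(X_2,\cdot)-\dim(\operatorname{im}\mathcal{R}_{X_1\to C}+\operatorname{im}\mathcal{R}_{X_2\to C})=h^0(X_1,\cdot)+h^0(X_2,\cdot)-(r_{X_1\to C}+r_{X_2\to C}-r)$, not $h^0(X_1,\cdot)+h^0(X_2,\cdot)-r$ as you wrote. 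Tracking this through, $r\le\min\{r_{X_1\to C},r_{X_2\to C}\}$ yields
\[
\Delta(X,K_X+\Lambda)\ \ge\ \Delta(X_1,\cdot)+\Delta(X_2,\cdot)+\max\{r_{X_1\to C},r_{X_2\to C}\}-2,
\]
with a $\max$, which is what the paper uses — your version with $\min$ is strictly weaker and breaks the argument. Indeed, for part (i) you need $\Delta(X_2,\cdot)+r_{X_2\to C}\ge 2$, which Lemma \ref{restdim} supplies because $X_2$ is \emph{irreducible}; no analogous lower bound on $r_{X_1\to C}$ is available since $X_1$ is only assumed connected and may be reducible, so "$\min\{r_{X_1\to C},r_{X_2\to C}\}\ge 2-\Delta(X_2,\cdot)$" cannot be justified. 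Likewise, your derivation of the bounds $r_{X_1\to C}\le 2-\Delta(X_2,\cdot)$ ("$r_{X_1\to C}\le r_{X_2\to C}$ needed for the minimum to be achieved consistently") does not parse as an argument; with the correct $\max$ inequality these bounds are immediate, since equality in (i) forces $\max\{r_{X_1\to C},r_{X_2\to C}\}=2-\Delta(X_2,\cdot)$, hence simultaneously $r_{X_2\to C}=2-\Delta(X_2,\cdot)$ (combined with Lemma \ref{restdim}) and $r_{X_1\to C}\le 2-\Delta(X_2,\cdot)$. Your proposed repair via "one image contained in the other, using Riemann--Roch on $C$" is unnecessary once the arithmetic is done correctly.

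The case analysis you then run (non-normal $\Rightarrow\Delta(X_2,\cdot)=1$ by Lemma \ref{nonnorm&irred} and $r_{X_2\to C}=1$; normal with $\Delta=0$ $\Rightarrow$ very ample and $C$ a line; normal with $\Delta=1$ and $r_{X_2\to C}=1$ $\Rightarrow$ composed with a pencil and $C$ a fiber) matches the paper, and your explicit exclusion of the genus-$2$ pencil of Theorem \ref{delta-genus-one}(7) — because there the boundary is empty whereas $(K_X+\Lambda)|_{X_2}=K_{X_2}+C+\Lambda|_{X_2}$ has $C$ in its boundary — is a point the paper leaves implicit and is correct. So the gap is confined to the numerical inequality: replace $\min$ by $\max$ (equivalently, fix the fiber-product dimension count) and the rest of your outline goes through as in the paper.
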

\begin{proof}
  $(K_X+\Lambda)^2=(K_X+\Lambda)|_{X_1}^2+(K_X+\Lambda)|_{X_2}^2$ together with  Remark \ref{fibersection} gives
\begin{equation}\label{deltaIneq}
\begin{split}
\Delta(X,K_X+\Lambda) & \ge \Delta(X_1,(K_X+\Lambda)|_{X_1})+\Delta(X_2,(K_X+\Lambda)|_{X_2})\\
&+\max\{d_{X_1\to C}((K_X+\Lambda)|_{X_1}),d_{X_2\to C}((K_X+\Lambda)|_{X_2})\}-2.
\end{split}
\end{equation}
  By Lemma \ref{restdim} we see that $\Delta(X_2,(K_X+\Lambda)|_{X_2})+d_{X_2\to C}((K_X+\Lambda)|_{X_2})\ge 2$.  Thus $\Delta(X,K_X+\Lambda)  \ge \Delta(X_1,(K_X+\Lambda)|_{X_1})$.

  If '=' holds, then
  \begin{align*}
  \max\{d_{X_1\to C}((K_X+\Lambda)|_{X_1}),d_{X_2\to C}((K_X+\Lambda)|_{X_2})\}= 2- \Delta(X_2,(K_X+\Lambda)|_{X_2}).
  \end{align*}
  Therefore, by Lemma \ref{restdim}, $d_{X_2\to C}((K_X+\Lambda)|_{X_2})=2$ if $\Delta(X_2,(K_X+\Lambda)|_{X_2})=0$, and $d_{X_2\to C}((K_X+\Lambda)|_{X_2})=1$ if $\Delta(X_2,(K_X+\Lambda)|_{X_2})=1$. The other statements of (ii) follow from Lemma \ref{restdim}.

\end{proof}

\begin{cor}\label{genus-like}
Let $(X,\Lambda)$ be a connected Gorenstein stable log surface.  Assume $Y\subset X$ is a connected subsurface. Then we have $\Delta(X,K_X+\Lambda)\ge \Delta(Y,(K_X+\Lambda)|_{Y})$.
\end{cor}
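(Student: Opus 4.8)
The plan is to induct on the number $n$ of irreducible components of $X$ not contained in $Y$, peeling off one ``outer'' component at a time and invoking Lemma \ref{2comps}(i) at each step. If $n=0$ then $Y=X$ and the asserted inequality is an equality, so I may assume $n\ge 1$.

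For the inductive step I first have to choose which component to remove, and this is pure bookkeeping on the dual graph $G$ of $X$ (vertices the irreducible components, an edge whenever two components meet along a curve): $G$ is connected and $Y$ corresponds to a connected proper subgraph $G_Y$, so contracting $G_Y$ to a point yields a connected graph with at least two vertices, which therefore has a spanning tree with a leaf other than the image of $G_Y$. Pulling this leaf back produces an irreducible component $X_0\not\subset Y$ for which $X':=\overline{X\setminus X_0}$ is still connected and still contains $Y$. The second --- and only substantial --- point is that $(X',(K_X+\Lambda)|_{X'})$ is again a Gorenstein stable log surface. Here I will use the first Theorem of \textsection\ref{non-normal}: the reduced curve $C':=X'\cap X_0$ is nonempty (since $X$ is connected), of pure codimension one (since $X$ is $S_2$), and is a union of components of the non-normal locus $D$ of $X$, so the partial normalization $\nu\colon\tilde X\to X$ along $C'$, with boundary $\tilde\Lambda+\tilde C'$, is a Gorenstein stable log surface. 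As $\tilde X$ is demi-normal it admits no codimension-two gluing of surfaces, so $X'$ is a connected component of $\tilde X$, with induced boundary $\Lambda|_{X'}+C'$; thus $(X',K_{X'}+\Lambda|_{X'}+C')=(X',(K_X+\Lambda)|_{X'})$ is a Gorenstein stable log surface, and it has exactly $n-1$ components not contained in $Y$.

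Granting this, the induction closes at once. From $X=X'\cup X_0$ with $X'$ connected and $X_0$ irreducible, Lemma \ref{2comps}(i) gives $\Delta(X,K_X+\Lambda)\ge \Delta(X',(K_X+\Lambda)|_{X'})$; and since $Y$ is a connected subsurface of $X'$ with $n-1$ components outside it, the induction hypothesis together with transitivity of restriction ($((K_X+\Lambda)|_{X'})|_{Y}=(K_X+\Lambda)|_{Y}$) gives $\Delta(X',(K_X+\Lambda)|_{X'})\ge \Delta(Y,(K_X+\Lambda)|_{Y})$, and chaining the two finishes the proof. The main obstacle is the middle claim that truncating an outer component off a stable log surface again yields a stable log surface; everything else is either the elementary graph-theoretic selection of $X_0$ or a direct application of Lemma \ref{2comps}(i).
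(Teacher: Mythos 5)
Your proof is correct and is essentially the paper's argument: an induction that moves between $Y$ and $X$ one irreducible component at a time, invoking Lemma \ref{2comps}(i) at each step --- the paper grows $Y$ upward ($\Delta(Y,\cdot)\le\Delta(Y\cup X_i,\cdot)\le\cdots$) whereas you peel components off $X$ downward. The only substantive addition is your explicit verification, via the partial normalization theorem of \textsection\ref{non-normal}, that the truncated surface $X'$ with boundary $\Lambda|_{X'}+C'$ is again a Gorenstein stable log surface so that the lemma applies at the next stage; the paper leaves this point (and the graph-theoretic choice of the component to remove) implicit.
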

\begin{proof}
If $X_i\subset X$ is an irreducible surface connected to $Y$, $\Delta(Y,(K_X+\Lambda)|_{Y})\le \Delta(Y\cup X_i,(K_X+\Lambda)|_{Y\cup X_i})$ by Lemma \ref{2comps}. Then by induction hypothesis, we have $\Delta(Y,(K_X+\Lambda)|_{Y})\le \Delta(X,K_X+\Lambda)$.
\end{proof}

\begin{definition}
Let $(X,\Lambda)$ be a reducible Gorenstein stable log surface. Write $X=\bigcup X_i$. We say that a rational map $\Phi$ separates $X_i$ and $X_j$ if $\Phi(X_i\setminus X_i\cap X_j)\cap \Phi(X_j\setminus X_i\cap X_j)=\emptyset$.
\end{definition}

\begin{lemma}\label{globalsection}
Let $(X,\Lambda)$ be a connected reducible Gorenstein stable log surface with $X=X_1\cup X_2$ such that $X_1$ is connected and $X_2$ is irreducible. $i_1\colon X_1\hookrightarrow X$ and $i_2\colon X_2\hookrightarrow X$ are the inclusion maps.
Then
\begin{itemize}
\item[(i)] We have the following commutative diagram:
\[
  \xymatrix{
   0 \ar[r] & \ker i_1^* \ar[r]\ar[d]^{\cong} & H^0(X,K_X+\Lambda) \ar[r]^{i_1^*}\ar[d]^{i_2^*} &H^0(X_1,(K_X+\Lambda)|_{X_1})\ar[d]^{-\mathcal{R}_{X_1\to X_1\cap X_2}}\\
   0 \ar[r] & \ker\mathcal{R}_{X_2\to X_1\cap X_2} \ar[r] & H^0(X_2,(K_X+\Lambda)|_{X_2}) \ar[r]^{\mathcal{R}_{X_2\to X_1\cap X_2}} &H^0(X_1\cap X_2,(K_X+\Lambda)|_{X_1\cap X_2})
   .
  }
\]
\item[(ii)] If $\mathrm{im}\, \mathcal{R}_{X_1\to X_1\cap X_2}\subset \mathrm{im} \, \mathcal{R}_{X_2\to X_1\cap X_2}$, then $i_1^*$ is surjective.
\item[(iii)] If $(K_X+\Lambda)|_{X_2}$ is very ample and $X_1\cap X_2$ is a line on $X_2$, then $i_1^*$ is surjective and $\Phi_{|K_X+\Lambda)|}$ separates $X_1$ and $X_2$.
\item[(iv)] If each $(K_X+\Lambda)|_{X_i}$ is very ample and $X_1\cap X_2$ is a line on each $X_i$, then $K_X+\Lambda$ is very ample.

\end{itemize}

\end{lemma}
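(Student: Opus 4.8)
\textbf{Proof plan for Lemma \ref{globalsection}.}

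The plan is to treat the four parts in order, each feeding into the next. For part (i), the key observation is that $X = X_1 \cup X_2$ with $X_1 \cap X_2$ the connecting curve, so the scheme-theoretic structure gives a Mayer--Vietoris type exact sequence
\[
0 \to \mathcal{O}_X \to \mathcal{O}_{X_1} \oplus \mathcal{O}_{X_2} \to \mathcal{O}_{X_1 \cap X_2} \to 0,
\]
and tensoring with the invertible sheaf $\mathcal{O}_X(K_X+\Lambda)$ (and using that $(K_X+\Lambda)|_{X_i}$ is by definition $\pi^*(K_X+\Lambda)$ restricted, compatibly on the overlap) stays exact. Taking global sections and chasing the resulting long exact sequence produces exactly the commutative diagram claimed: the left vertical map is an isomorphism because $\ker \mathcal{R}_{X\to X_1}$ consists of sections vanishing on $X_1$, hence supported on $X_2$ and vanishing on $X_1\cap X_2$, which is precisely $\ker\mathcal{R}_{X_2\to X_1\cap X_2}$; and the rows are the defining exact sequences of the kernels. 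I would state this via the Mayer--Vietoris sequence and then identify the terms, rather than constructing the maps by hand.

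For part (ii), I would argue directly from the fiber product description in Corollary \ref{fibersection}: a section $s_1 \in H^0(X_1,(K_X+\Lambda)|_{X_1})$ lifts to $H^0(X, K_X+\Lambda)$ exactly when there exists $s_2 \in H^0(X_2,(K_X+\Lambda)|_{X_2})$ with $\mathcal{R}_{X_1\to X_1\cap X_2}(s_1) = \mathcal{R}_{X_2\to X_1\cap X_2}(s_2)$ (the sign is irrelevant since $s_2$ can be rescaled). The hypothesis $\mathrm{im}\,\mathcal{R}_{X_1\to X_1\cap X_2} \subset \mathrm{im}\,\mathcal{R}_{X_2\to X_1\cap X_2}$ guarantees such an $s_2$ always exists, so $\mathcal{R}_{X\to X_1}$ is surjective; alternatively this is immediate from the diagram in (i) by a diagram chase. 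Part (iii) is then a special case: if $(K_X+\Lambda)|_{X_2}$ is very ample then $\Phi$ embeds $X_2$, and if $X_1\cap X_2$ is a line on $X_2$ then $\Phi(X_1\cap X_2)$ is a line in $|(K_X+\Lambda)|_{X_2}|^*$, which is cut out by hyperplanes; hence every linear form on the line $\Phi(X_1\cap X_2)$ extends to a hyperplane section of $X_2$, i.e. $\mathcal{R}_{X_2\to X_1\cap X_2}$ is surjective, so $\mathrm{im}\,\mathcal{R}_{X_1\to X_1\cap X_2}$ is trivially contained in it and (ii) applies. For the separation statement I would use that $\ker \mathcal{R}_{X\to X_1} \cong \ker\mathcal{R}_{X_2\to X_1\cap X_2}$ together with surjectivity of $\mathcal{R}_{X\to X_1}$: these two groups of sections respectively ``see only $X_1$'' and ``see only $X_2$'' modulo the overlap, and since $(K_X+\Lambda)|_{X_2}$ separates points and there are enough sections vanishing on $X_1$ but not on a given point of $X_2 \setminus X_1\cap X_2$, the images $\Phi(X_1 \setminus X_1\cap X_2)$ and $\Phi(X_2\setminus X_1\cap X_2)$ are disjoint.

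Part (iv) is the one requiring real care, and it is where I expect the main obstacle. The statement is for $X = \bigcup X_i$ with each $(K_X+\Lambda)|_{X_i}$ very ample and each connecting curve a line on both its components; I would induct on the number of components, peeling off one irreducible component $X_2$ at a time and writing $X = X_1 \cup X_2$ as in (i)--(iii) — though one must be careful that $X_1\cap X_2$ may be a disjoint union of several connecting lines rather than a single curve, so the Mayer--Vietoris argument and Corollary \ref{fibersection} must be applied with $C$ possibly disconnected, which is fine since they are stated for arbitrary codimension-one subschemes. Very ampleness of $K_X+\Lambda$ means separating points and tangent vectors: by induction $(K_X+\Lambda)|_{X_1}$ is very ample, by (iii) $\Phi$ separates $X_1$ from $X_2$ and $\mathcal{R}_{X\to X_1}$ is surjective (so points/tangent vectors within $X_1$ or within $X_2$ are separated by restriction), and the remaining issue is separation of a point of $X_1\cap X_2$ from tangent directions pointing out of the plane spanned by the two components — here one uses that the connecting curve is a line on each side, so its image is a genuine line (not something more degenerate) and the two embeddings glue along it transversally in the ambient projective space. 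The delicate point is checking tangent-vector separation at points of the connecting locus; I would handle this by combining the local surjectivity coming from (i) with the fact that a line being a line on each $X_i$ forces the two tangent planes to be distinct in $|K_X+\Lambda|^*$, so no tangent vector is lost.
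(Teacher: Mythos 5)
Your proposal follows essentially the same route as the paper's (very terse) proof: (i)--(ii) are the diagram chase through the fiber-product description of Corollary \ref{fibersection} (your Mayer--Vietoris sequence is that corollary restated), (iii) is surjectivity of restriction to a line plus the observation that $\ker \mathcal{R}_{X\to X_1}$ is nontrivial with base locus $X_1$, and (iv) combines surjectivity of the restrictions with the nontrivial kernels to separate points and tangents. The only place you go beyond the paper is in flagging the tangent-vector separation along the connecting line in (iv) --- which the paper dismisses with ``we have plenty of sections'' --- and your instinct that the distinct tangent planes together with the sections vanishing on each component supply the missing cotangent directions is the right way to close that step.
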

\begin{proof}
(i) and (ii) are obvious by chasing the diagram.

For (iii), first we see that $\mathcal{R}_{X_2\to X_1\cap X_2}$ is surjective.  Then any section in $H^0(X_1,(K_X+\Lambda)|_{X_1})$ can be extended into a section in  $H^0(X,K_X+\Lambda)$. Therefore $i_1^*$ is surjective. Next $\ker \mathcal{R}_{X_2\to X_1\cap X_2}$ is nontrivial and its base part is  the line $X_1\cap X_2$. Then $\ker i_1^*$ is nontrivial and its base part is $X_1$. Therefore $\Phi_{|K_X+\Lambda)|}$ separates $X_1$ and $X_2$.

For (iv) we see that $\mathcal{R}_{X\to X_i}$ is surjective and the kernel is nontrivial by (ii). Then we have plenty of sections to separate points and tangents, which implies $K_X+\Lambda$ is very ample.

\end{proof}
\begin{cor}\label{equaldelta}
  Let $(X,\Lambda)$ be a log surface as in Lemma \ref{2comps}.
  We assume further $X_1$ is irreducible and $\Delta(X,K_X+\Lambda)= \Delta(X_i,(K_X+\Lambda)|_{X_i})\le 1$ for $i=1,2$.
  Then
  \begin{itemize}
    \item[(i)] if $\Delta(X,K_X+\Lambda)=0$, then $X_1\cap X_2$ is a line on each $X_i$ and $K_X+\Lambda$ is very ample.

    \item[(ii)] if $\Delta(X,K_X+\Lambda)=1$,
        then
        \begin{itemize}
          \item either $X_1$ and $X_2$ are both normal. Each $|(K_X+\Lambda)|_{X_i}|$ is composite with a pencil. $X_1\cap X_2$ is a fiber on each $X_i$. $|K_X+\Lambda|$ is composite with a pencil as well.

          \item or $X_1$ and $X_2$ are both non-normal.
          The base points of $|(K_X+\Lambda)|_{X_i}|$ coincide into the unique base point of $|K_X+\Lambda|$. $X_1\cap X_2$ is a line passing through the base point of $|(K_X+\Lambda)|_{X_i}|$ on each $X_i$.
        \end{itemize}

  \end{itemize}
\end{cor}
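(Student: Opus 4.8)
The plan is to exploit that both $X_1$ and $X_2$ are irreducible, so Lemma~\ref{2comps} applies with the two components in \emph{either} order; write $C:=X_1\cap X_2$. Since $\Delta(X,K_X+\Lambda)=\Delta(X_i,(K_X+\Lambda)|_{X_i})$ for $i=1,2$, both applications fall into the equality case of Lemma~\ref{2comps}(i); reading Lemma~\ref{2comps}(ii) in both directions and combining with Lemma~\ref{restdim} yields, for each $i$, that $r_{X_i\to C}((K_X+\Lambda)|_{X_i})=2-\Delta(X,K_X+\Lambda)$ together with a structural description of $(X_i,C+\Lambda|_{X_i})$; note that by Lemma~\ref{nonnorm&irred} a non-normal $X_i$ has $\Delta(X_i,(K_X+\Lambda)|_{X_i})\ge 1$, so for $\Delta(X,K_X+\Lambda)=0$ both $X_i$ are normal. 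Part (i) then follows: when $\Delta(X,K_X+\Lambda)=0$, each $(K_X+\Lambda)|_{X_i}$ is very ample and $C$ is a line on each $X_i$ (equality case of Lemma~\ref{2comps}(ii)), so $K_X+\Lambda$ is very ample by Lemma~\ref{globalsection}(iv).

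For part (ii), i.e.\ $\Delta(X,K_X+\Lambda)=1$, Lemma~\ref{2comps}(ii) read both ways shows each $X_i$ is of one of two types: \emph{(N)} $X_i$ normal, $|(K_X+\Lambda)|_{X_i}|$ composed with a pencil of elliptic curves and $C$ a fiber on $X_i$ (so $X_i$ is as in Theorem~\ref{delta-genus-one}(5) or (6), case (7) being excluded since the boundary $C+\Lambda|_{X_i}$ is nonzero); or \emph{(nN)} $X_i$ non-normal, $|(K_X+\Lambda)|_{X_i}|$ with a base point $b_i$ and $C$ a line on $X_i$ through $b_i$. In either pure subcase I would first record, from adjunction along $C$ and the explicit description of $X_i$, that $\deg((K_X+\Lambda)|_C)=1$ and $p_a(C)=1$, hence $h^0(C,(K_X+\Lambda)|_C)=1=r_{X_i\to C}$, so each $\mathcal{R}_{X_i\to C}$, and with it each $\mathcal{R}_{X\to X_i}$, is surjective. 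In the both-(N) subcase this forces $\Phi_{|K_X+\Lambda|}$ to restrict on each $X_i$ to $\Phi_{|(K_X+\Lambda)|_{X_i}|}$, which has one-dimensional image and contracts $C$ to a point; hence $\Phi_{|K_X+\Lambda|}(X)=\Phi_{|K_X+\Lambda|}(X_1)\cup\Phi_{|K_X+\Lambda|}(X_2)$ is a connected curve and $|K_X+\Lambda|$ is composed with a pencil, which together with Lemma~\ref{2comps}(ii) gives the first alternative of (ii). In the both-(nN) subcase I would run the common one-dimensional images of $\mathcal{R}_{X_1\to C}$ and $\mathcal{R}_{X_2\to C}$ through the fiber product of Corollary~\ref{fibersection}: they are spanned by a section of $(K_X+\Lambda)|_C$ vanishing at a single point $z\in C$, so $z\in\mathrm{Bs}|K_X+\Lambda|$, while also $b_i\in\mathrm{Bs}|K_X+\Lambda|\cap X_i$; since in these configurations $\mathrm{Bs}|K_X+\Lambda|$ is a single point, $b_1=b_2=z$, giving the second alternative of (ii).

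The main obstacle is to exclude the \emph{mixed} case, where one component, say $X_1$, is of type (N) and the other, $X_2$, of type (nN); here all the numerical data ($p_g$, $(K_X+\Lambda)^2$, and the $r_{X_i\to C}$) are consistent, so the obstruction must be geometric. My approach would be to confront the two roles of $C$: on $X_1$ it is a member of the elliptic pencil, so $p_a(C)=1$ and, by Theorem~\ref{delta-genus-one}(5)--(6), $\Lambda|_{X_1}$ restricts on $C$ to an effective divisor of degree $1$ concentrated at a point lying either on a singularity of $X_1$ or on the base point of $|(K_X+\Lambda)|_{X_1}|$; on $X_2$ it is a line through the base point of $|(K_X+\Lambda)|_{X_2}|$, which by Lemma~\ref{restdim}(iii) pins $X_2$ down to one of the few non-normal $\Delta=1$ surfaces of Theorems~\ref{delta1,1}--\ref{delta1,0} whose polarization actually has a base point --- those with $\bar X_2=\mathbb{P}^2$ or $\bar X_2=C_{N-1}$ and the centre of projection lying on $\bar X_2$. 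I expect the contradiction to come from matching, along $C$, the semi-log-canonical gluing datum --- the involution $\tau$ on $\bar C$ and the identity $(K_{X_1}+C+\Lambda|_{X_1})|_C=(K_{X_2}+C+\Lambda|_{X_2})|_C$ forced by $K_X+\Lambda$ being Cartier --- against these explicit models: the way $\Lambda$ and $C$ meet near the singular (or base) point on the $X_1$ side cannot be reconciled with the way $C$ threads the base point on the $X_2$ side, and this incompatibility closes the argument for (ii).
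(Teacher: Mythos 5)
Your treatment of part (i) and of the two pure subcases of part (ii) follows essentially the paper's route (Lemma \ref{2comps} applied with the two components in either order, combined with Lemma \ref{restdim}, Lemma \ref{globalsection} and the fiber product of Corollary \ref{fibersection}), and those portions are sound. The problem is the mixed case, which you yourself identify as the main obstacle: what you offer there is only a declared strategy (``I expect the contradiction to come from matching \dots the semi-log-canonical gluing datum''), and no contradiction is actually derived. Since the entire content of the dichotomy in (ii) is precisely that one component cannot be of type (N) while the other is of type (nN), this is a genuine gap rather than a routine omission.

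The paper closes the mixed case with a short geometric observation that your purely numerical framing misses: the connecting curve $C=X_1\cap X_2$ is a single curve of $X$, and its two proper transforms $\bar{C}_i\subset\bar{X}_i$ each map birationally onto $C$, so they must have the same geometric genus. On the type (N) side $C$ is a fiber of the elliptic pencil, while on the type (nN) side the proper transform of $C$ is a line on $\bar{X}_2$, hence a rational curve of degree one; these genera are incompatible. The contradiction is therefore already visible on $C$ itself, before any discussion of the involution $\tau$ or of the explicit models in Theorems \ref{delta1,1} and \ref{delta1,0}; I would replace your final paragraph by this genus comparison rather than attempt the model-by-model matching you sketch. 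A smaller point: in the both-(nN) subcase you should first justify that the two one-dimensional images of $\mathcal{R}_{X_i\to C}$ actually coincide --- if they met only in $0$, then $p_g(X,\Lambda)=p_g(X_1,(K_X+\Lambda)|_{X_1})+p_g(X_2,(K_X+\Lambda)|_{X_2})-2$, which forces $\Delta(X,K_X+\Lambda)=2$, a contradiction; the equality $b_1=b_2$ is then exactly the statement that the two spanning sections vanish at the same point of the line $C$.
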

\begin{proof}
(i) follows from Lemma \ref{2comps}  and Lemma \ref{globalsection}.

For (ii), applying Lemma \ref{2comps} we see that either $X_i$ is non-normal or $(K_X+\Lambda)|_{X_i}$ is composite with a pencil. It is easy to see that either $X_1$, $X_2$ are both non-normal or $(K_X+\Lambda)|_{X_1}$ and $(K_X+\Lambda)|_{X_2}$ are both composite with a pencil of elliptic curves, since the geometric genus of $X_1\cap X_2$ on $X_1$ or $X_2$ should coincide.

If $(K_X+\Lambda)|_{X_1}$ and  $(K_X+\Lambda)|_{X_2}$ are both composite with a pencil of elliptic curve, then by Lemma \ref{2comps} each $(K_X+\Lambda)|_{X_i}$ is composite with a pencil and $X_1\cap X_2$ is a fiber on each $X_i$. Therefore $|K_X+\Lambda|$ is composite with a pencil as well. 

If $X_1$ and $X_2$ are both non-normal,
then by Lemma \ref{restdim}, $X_1\cap X_2$ is a line passing through the base point of $|(K_X+\Lambda)|_{X_i}|$ on each $X_i$. These two base points must coincide, otherwise no nontrivial section in $H^0(X_i,(K_X+\Lambda)|_{X_i})$ can be glued into a section of $H^0(X,K_X+\Lambda)$.

\end{proof}
\begin{lemma}
\label{decrease1}
  Let $(X,\Lambda)$ be a connected Gorenstein stable log surface which has two irreducible components $X_1$, $X_2$.
  Assume further $\Delta(X_1,(K_X+\Lambda)|_{X_1})=\Delta(X_2,(K_X+\Lambda)|_{X_2})=0$ and $\Delta(X,K_X+\Lambda)=1$.
  Then
   \[d_{X_1\to X_1\cap X_2}((K_X+\Lambda)|_{X_1})=d_{X_2\to X_1\cap X_2}((K_X+\Lambda)|_{X_2})=3.\]
  Moreover, $(X_i,X_1\cap X_2+\Lambda_i)$ is as in Lemma \ref{d=3}

\end{lemma}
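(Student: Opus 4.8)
The plan is a short two-step argument: an \emph{exact} dimension count through the fiber product of Corollary~\ref{fibersection}, followed by exclusion of the ``small'' value using the equality clause of Lemma~\ref{restdim}(i).

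\textbf{Step 1 (dimension count).} I would set $C := X_1\cap X_2$ and $L_i := (K_X+\Lambda)|_{X_i}=K_{X_i}+C+\Lambda|_{X_i}$. First record that each $(X_i,C+\Lambda|_{X_i})$ is again a Gorenstein stable log surface, and that the hypothesis $\Delta(X_i,L_i)=0$ forces $X_i$ to be normal by Lemma~\ref{nonnorm&irred}, so Theorem~\ref{gepg-2} applies; in particular $h^0(X_i,L_i)=L_i^2+2$ and $L_i$ is very ample. The fiber product square of Corollary~\ref{fibersection} identifies $H^0(X,K_X+\Lambda)$ with the kernel of
\[
H^0(X_1,L_1)\oplus H^0(X_2,L_2)\longrightarrow H^0(C,(K_X+\Lambda)|_C),\qquad (s_1,s_2)\mapsto \mathcal{R}_{X_1\to C}(s_1)-\mathcal{R}_{X_2\to C}(s_2),
\]
whose image is $V_1+V_2$ with $V_i:=\mathcal{R}_{X_i\to C}(H^0(X_i,L_i))$ and $\dim V_i=r_{X_i\to C}(L_i)$. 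Hence $h^0(X,K_X+\Lambda)=L_1^2+L_2^2+4-\dim(V_1+V_2)$. On the other hand $(K_X+\Lambda)^2=L_1^2+L_2^2$ and $\Delta(X,K_X+\Lambda)=1$ give $h^0(X,K_X+\Lambda)=(K_X+\Lambda)^2+1=L_1^2+L_2^2+1$. Comparing the two expressions yields $\dim(V_1+V_2)=3$.

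\textbf{Step 2 (pinning down the $r$'s).} Since $V_i\subseteq V_1+V_2$ we immediately get $r_{X_i\to C}(L_i)\le 3$ for $i=1,2$, while Lemma~\ref{restdim}(i) gives $r_{X_i\to C}(L_i)\ge 2$; so everything comes down to ruling out the value $2$. Suppose $r_{X_1\to C}(L_1)=2$. By the equality clause of Lemma~\ref{restdim}(i), $C$ is then a line on $X_1$; being reduced (it is the connecting curve) and having $L_1$-degree $1$ against the ample $L_1$, the curve $C$ is integral with $\deg_{L_1}C=1$. A line bundle of degree $1$ on an integral projective curve has at most two sections, so $h^0(C,(K_X+\Lambda)|_C)=h^0(C,L_1|_C)\le 2$; but $V_1+V_2$ is a $3$-dimensional subspace of $H^0(C,(K_X+\Lambda)|_C)$, a contradiction. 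Hence $r_{X_1\to C}(L_1)=3$, and exchanging the roles of $X_1$ and $X_2$ gives $r_{X_2\to C}(L_2)=3$.

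The only genuinely delicate point, I expect, is this last exclusion: it crucially uses the \emph{exact} value $\dim(V_1+V_2)=3$ coming from the fiber product (the mere inequality in Corollary~\ref{fibersection} would not suffice), together with the observation that a line's worth of sections on $C$ spans only a $2$-dimensional space. Everything else — translating $\Delta$-genus into $h^0$, the additivity $(K_X+\Lambda)^2=L_1^2+L_2^2$, normality of the $X_i$ via Lemma~\ref{nonnorm&irred}, and the lower bound $r_{X_i\to C}(L_i)\ge 2$ via Lemma~\ref{restdim}(i) — is routine bookkeeping.
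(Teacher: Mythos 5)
Your argument is correct, and it shares its skeleton with the paper's proof (the fiber product of Corollary \ref{fibersection} together with the bounds $2\le r_{X_i\to C}\le 3$ furnished by Lemma \ref{restdim}(i) and the $\Delta$-genus count), but the decisive exclusion of the value $2$ is carried out by a genuinely different and, to my mind, cleaner mechanism. The paper keeps only the inequality $\max\{r_1,r_2\}\le 3$ from \eqref{deltaIneq}, asserts that at least one $r_i$ must equal $3$, and then rules out the mixed case $(r_1,r_2)=(2,3)$ by claiming that the nonzero elements of the two restriction images have ``different degrees'' on $C=X_1\cap X_2$, so that only sections vanishing on $C$ glue and $p_g(X,\Lambda)\le (K_X+\Lambda)^2$. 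You instead read the \emph{exact} equality $\dim(V_1+V_2)=3$ off the fiber product and observe that if $C$ were a line on either component, the ambient space $H^0(C,(K_X+\Lambda)|_C)$ would be at most two-dimensional and could not contain a three-dimensional subspace. Your route buys two things: it disposes of the cases $(2,2)$ and $(2,3)$ uniformly (the paper's elimination of $(2,2)$ is left implicit in the word ``Thus''), and it sidesteps the paper's premise that $C$ can be a line on $X_1$ but not on $X_2$ --- a premise that is hard to reconcile with $\deg_{L_1}(C)=\deg\bigl((K_X+\Lambda)|_C\bigr)=\deg_{L_2}(C)$, so that the ``different degrees'' step is at best redundant. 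The auxiliary ingredients you invoke --- normality of each $X_i$ via Lemma \ref{nonnorm&irred} so that Theorem \ref{gepg-2} and Lemma \ref{restdim}(i) apply, additivity of $(K_X+\Lambda)^2$ over components, and the fact that a degree-one line bundle on an integral projective curve has at most two sections --- are all sound.
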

\begin{proof}
  By (\ref{deltaIneq}), we have $\max\{d_{X_1\to X_1\cap X_2}((K_X+\Lambda)|_{X_1}),d_{X_2\to X_1\cap X_2}((K_X+\Lambda)|_{X_2})\}\le 3$. Moreover $d_{X_i\to X_1\cap X_2}((K_X+\Lambda)|_{X_i})\ge 2$ by Lemma \ref{restdim}. Thus there must be one $d_{X_i\to X_1\cap X_2}((K_X+\Lambda)|_{X_i})=3$.

  Now we suppose $d_{X_1\to X_1\cap X_2}((K_X+\Lambda)|_{X_1})=2$ and $d_{X_2\to X_1\cap X_2}((K_X+\Lambda)|_{X_2})=3$ for a contradiction.
  Then $X_1\cap X_2$ is a line on $X_1$ and not a line on $X_2$. Thus nonzero elements of $\mathrm{im}\,\mathrm{Res}_{X_i\to X_1\cap X_2}$ have different degrees. Hence only those sections in $H^0(X_i,(K_X+\Lambda)|_{X_i})$ vanishing on $X_1\cap X_2$ can be glued together into a global section.
  Therefore
  \begin{align*}
  p_g(X,\Lambda)&=\dim\ker \mathcal{R}_{X_1\to X_1\cap X_2}+\dim \ker \mathcal{R}_{X_2\to X_1\cap X_2}\\
  &\le h^0(X_1,(K_X+\Lambda)|_{X_1})-2+h^0(X_2,(K_X+\Lambda)|_{X_2})-2\\
  &\le (K_X+\Lambda)^2,
  \end{align*}
  a contradiction. This completes the proof.

\end{proof}

\begin{lemma}\label{d=3}
Let $(X,D+\Lambda)$ be a normal Gorenstein stable log surface with $\Delta(X,K_X+D+\Lambda)=0$ and $d_{X\to D}(K_X+D+\Lambda)=3$. Then $(X,D+\Lambda)$ is one of the following:

\begin{enumerate}[(i)]
\item $X$ is $\mathbb{P}^{2}$, $D\in|\mathcal{O}_{\mathbb{P}^{2}}(k)|$ and $\Lambda\in|\mathcal{O}_{\mathbb{P}^{2}}(4-k)|$ ($2\le k \le 4$) ;
\item $X$ is $\mathbb{P}^{2}$, $D\in|\mathcal{O}_{\mathbb{P}^{2}}(1)|$  and $\Lambda\in|\mathcal{O}_{\mathbb{P}^{2}}(4)|$;
\item $X$ is $\Sigma_d$, $D\in|\Delta_0+\Gamma|$ and $N=d+3$ and  $\Lambda\in|2\Delta_0+(2d+2)\Gamma|$;
\item $X$ is $\Sigma_d$, $D=\Delta_0$ and $N=d+5$ and  $\Lambda\in|2\Delta_0+(2d+4)\Gamma|$;

\item $X$ is a singular quadric $C_2$ in $\mathbb{P}^{3}$, $D\in |\mathcal{O}_{X}(1)|$ and  $\Lambda\in|\mathcal{O}_{X}(2)|$. Moreover, $D$ does not pass the singularity of $X$;
\item  $X$ is a cone $C_{N-1}\hookrightarrow \mathbb{P}^{N}$, $D$ is a sum of two different rulings, and $\Lambda\in |\mathcal{O}_{X}(2)|$. ($N\ge 3$)

\end{enumerate}
	
\end{lemma}
\begin{proof}
First by Cor \ref{gepg-2}, $\Delta(X,K_X+D+\Lambda)=0$ implies $X$ is embedded in $\mathbb{P}^{N}$ by $\Phi:=\Phi_{|K_X+D+\Lambda|}$ as a surface of minimal degree and $D+\Lambda$ is as described in Cor \ref{gepg-2}. Second $d_{X\to D}(K_X+D+\Lambda)=3$ implies the embedding image $\Phi(D)$ of $D$ spans a projective plane $<\Phi(D)>\cong \mathbb{P}^{2}$ in $\mathbb{P}^{N}$.

If $X$ is $\mathbb{P}^{2}$ embedded in $\mathbb{P}^{2}$, $D$ and $\Lambda$ is clear.
If $X$ is $\mathbb{P}^{2}$ embedded in $\mathbb{P}^{5}$, then by Thm \ref{linearsectionthm} we see that  $\Phi(D)$ is quadric in $<\Phi(D)>$ (taking a generic line $l\in <\Phi(D)>$, we have $\mathrm{length}~ l\cap \Phi(X)=\mathrm{length}~ l\cap \Phi(D)\le 2$). Hence $\Phi(D)\cdot \mathcal{O}_{\mathbb{P}^{N}}(1)=2$. Then $D.(K_X+D+\Lambda)=2$. Therefore $D\in|\mathcal{O}_{\mathbb{P}^{2}}(1)|$ and $\Lambda\in|\mathcal{O}_{\mathbb{P}^{2}}(4)|$. If $X$ is $\Sigma_d$, similarly we have $D.(K_X+D+\Lambda)=2$. Hence either $D\in|\Delta_0+\Gamma|$ and $N=d+3$ which is the case (iii),  or $D=\Delta_0$ and $N=d+5$  which is the case (iv).
If is a $X$ is a cone $C_{N-1}$, $N\ge 3$, similarly we have $D.(K_X+D+\Lambda)=2$. Hence $D$ is either a sum of two different rulings which is the case (vi), or $N=3$ and $D$ is a hyperplane section of $X$ such that the vertex $v\not\in D$ which is the case (v).

\end{proof}

\begin{cor}\label{diffdelta}
Let $(X,\Lambda)$ be a connected Gorenstein stable log surface.
  Assume $X=X_1\cup X_2$, where $X_i$ is irreducible.
  Assume further $\Delta(X,K_X+\Lambda)= \Delta(X_1,(K_X+\Lambda)|_{X_1})=1$ and
  $\Delta(X_2,(K_X+\Lambda)|_{X_2})=0$.

  Then
  \begin{itemize}
    \item [(i)] either each $(K_X+\Lambda)|_{X_i}$ is very ample and  the curve $X_1\cap X_2$ is a line on each $X_i$.
        Moreover, $K_X+\Lambda$ is very ample; 
    \item [(ii)] or $X_1$ is non-normal.
    $X_1\cap X_2$ is a line on each $X_i$.

  \end{itemize}

\end{cor}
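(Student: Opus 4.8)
The plan is to feed the decomposition $X=X_1\cup X_2$ into Lemma \ref{2comps}, pin down the connecting curve, and then treat separately the cases $X_1$ non-normal and $X_1$ normal, the latter by invoking the classification in Theorem \ref{delta-genus-one} together with the Gorenstein condition along the connecting curve.

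First I would apply Lemma \ref{2comps} with $X_1$ as the connected piece and $X_2$ as the irreducible one. Since $\Delta(X,K_X+\Lambda)=\Delta(X_1,(K_X+\Lambda)|_{X_1})$ (equality in part (i)) and $\Delta(X_2,(K_X+\Lambda)|_{X_2})=0\le 1$, part (ii) places us in its second bullet: $X_2$ is normal, $(K_X+\Lambda)|_{X_2}$ is very ample, $C:=X_1\cap X_2$ is a line on $X_2$, and $r_{X_1\to C}((K_X+\Lambda)|_{X_1})\le 2$. Next, since $(K_X+\Lambda)|_{X_2}$ is very ample and $C$ has degree one with respect to it, $C\cong\mathbb{P}^1$; the conductor curve on $X_1$ is isomorphic to the one on $X_2$, so the copy of $C$ in $X_1$ is again a smooth rational curve, and $\deg(K_X+\Lambda)|_C=C\cdot(K_X+\Lambda)|_{X_2}=1$ forces $C\cdot(K_X+\Lambda)|_{X_1}=1$ as well; hence $C$ is a line on $X_1$ too. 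This already yields the ``$X_1\cap X_2$ is a line on each $X_i$'' part of both (i) and (ii), so if $X_1$ is non-normal we are done (case (ii)).

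Assume now $X_1$ is normal. Then $(X_1,\,C+\Lambda|_{X_1})$ is a normal Gorenstein stable log surface with $\Delta=1$ whose (reduced) boundary contains the line $C$, hence it appears in the list of Theorem \ref{delta-genus-one}, and I would prove $(K_X+\Lambda)|_{X_1}$ is very ample by excluding the remaining types. Type (7) has empty boundary and is impossible here. For the generically two-to-one types --- the double cover of $\mathbb{P}^2$ in (1), and the degree-$2$ weak del Pezzo occurring as the $k=7$ subcase of (3) --- the relation $C\cdot(K_X+\Lambda)|_{X_1}=1$ forces $\Phi_{|(K_X+\Lambda)|_{X_1}|}$ to carry $C$ isomorphically onto a line, so $C$ must lie over a line component $\ell$ of the branch curve $B$; since $\deg B>1$ the line $\ell$ meets $B-\ell$, and over those intersection points $X_1$ acquires canonical (at least $A_1$) singularities lying on $C$. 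A reduced curve through such a singularity is not Cartier there, so $K_X+\Lambda$ would fail to be Cartier along $C$, contradicting the Gorenstein hypothesis on $X$. For the types where $|(K_X+\Lambda)|_{X_1}|$ is composed with a pencil (types (5) and (6)) one uses $r_{X_1\to C}\le 2$ together with Lemma \ref{restdim}(ii) and the fact that $C$ is a \emph{smooth} rational line, which can be neither a whole fibre of an elliptic (resp.\ genus-$2$) pencil nor a singular rational member of one, to rule them out. In the remaining types --- the quadric (2), the del Pezzos of degree $\ge 3$ in (3), and the elliptic cones (4) --- one checks directly that $(K_X+\Lambda)|_{X_1}$ is very ample. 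Once $(K_X+\Lambda)|_{X_1}$ and $(K_X+\Lambda)|_{X_2}$ are both very ample and $C$ is a line on each $X_i$, Lemma \ref{globalsection}(iv) yields that $K_X+\Lambda$ is very ample, which is case (i).

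I expect the composed-with-a-pencil types (5) and (6) in the normal case to be the main obstacle: the numerical bound $r_{X_1\to C}\le 2$ by itself does not eliminate them, and one genuinely has to use the local geometry of the gluing --- that the conductor curve on $X_1$ is a smooth $\mathbb{P}^1$ meeting the boundary transversally and avoiding every non-Cartier point of $X_1$ --- to see that such an $X_1$ cannot be glued to a $\Delta=0$ component without destroying the Gorenstein or slc property of $(X,\Lambda)$. This forces one to argue through the classification of Gorenstein slc singularities (Remark \ref{rem: classification of sings}) rather than purely through dimension counts.
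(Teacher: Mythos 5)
Your architecture coincides with the paper's: feed $X=X_1\cup X_2$ into Lemma \ref{2comps}, split on whether $X_1$ is normal, and in the normal case run through the list of Theorem \ref{delta-genus-one}. Your derivation of ``$X_1\cap X_2$ is a line on each $X_i$'' directly from $\deg\bigl((K_X+\Lambda)|_C\bigr)=1$ (the degree of $(K_X+\Lambda)|_C$ does not depend on which component it is computed in, and $C\cong\mathbb{P}^1$ because it is a line on the embedded $X_2$) is correct and is in fact cleaner than the paper, which obtains this statement in the normal case only after proving very ampleness and in the non-normal case by partially normalizing along the non-normal locus of $X_1$ and appealing to the $\Delta=0$ classification. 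So (ii) and the ``line'' assertions are fine.

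The gap is in the exclusion of the normal, non-very-ample types, which is the real content of (i), and it occurs in both sub-cases. For the generically two-to-one types you assume that a curve $C$ with $C\cdot(K_X+\Lambda)|_{X_1}=1$ mapping isomorphically onto a line must lie over a line component of the branch curve $B$; but the preimage of a line $\ell\not\subset B$ can split (for instance $\ell$ a bitangent of the branch quartic of a degree-$2$ del Pezzo, with $C$ one of the two $(-1)$-curves over it), and then no singularity of $X_1$ is forced onto $C$, so your Cartier/slc obstruction never engages. Moreover ``a reduced curve through an $A_1$-point is not Cartier'' is not by itself a contradiction: what must be Cartier is $K_{X_1}+C+\Lambda|_{X_1}$, and a branch of $\Lambda|_{X_1}$ through the same point can restore both Cartier-ness and log canonicity of the pair. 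For the pencil types with $p_g=2$ (notably type (6), where $C$ is a $(-1)$-curve on a degree-$1$ del Pezzo, $r_{X_1\to C}=2$, and every numerical constraint from Lemma \ref{2comps} and Cor \ref{fibersection} is satisfied) you explicitly defer to ``the local geometry of the gluing'' without producing the argument; that deferral is exactly the missing step. Be aware that you cannot simply import the paper's exclusion of the double-cover case here: that argument counts section degrees under the assumption $C\cdot(K_X+\Lambda)|_{X_1}=2$, which your own (correct) degree computation rules out, so the genuinely problematic configurations are the ones neither your sketch nor that count addresses. As written, the proposal does not establish the very-ampleness dichotomy in (i).
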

\begin{proof}
  First by Lemma \ref{2comps}, $X_1\cap X_2$ will be a line on $X_2$ and $d_{X_1\to X_1\cap X_2}((K_X+\Lambda)|_{X_1})=1$, or $2$.

  For the case $X_1$ is normal, we claim that $d_{X_1\to X_1\cap X_2}((K_X+\Lambda)|_{X_1})\not =1$. Otherwise, by Lemma \ref{restdim}, $|(K_X+\Lambda)|_{X_1}|$ is composite with a pencil of elliptic curves and the connecting curve $X_1\cap X_2$ will be an elliptic fiber on $X_1$. Hence $\mathrm{im}\,\mathcal{R}_{X_1\to X_1\cap X_2}$ consists of constant sections. Thus only those sections of $H^0(X_i, (K_X+\Lambda)|_{X_i})$ vanishing on $X_1\cap X_2$ can be glued together into a global section. Therefore,
  \begin{align*}
  p_g(X,\Lambda)&=\dim\ker \mathcal{R}_{X_1\to X_1\cap X_2}+\dim \ker \mathcal{R}_{X_2\to X_1\cap X_2}\\
  &=h^0(X_1,(K_X+\Lambda)|_{X_1})+h^0(X_2,(K_X+\Lambda)|_{X_2})-4\\
  &< (K_X+\Lambda)^2,
  \end{align*}
  a contradiction.

  Hence we have $d_{X_1\to X_1\cap X_2}((K_X+\Lambda)|_{X_1})=2$. Therefore $(K_X+\Lambda)|_{X_1}$ is not composite with a pencil and the log canonical image of $X_1\cap X_2$ is a line.
  Next we show that $X_1$ is not a double cover of $\mathbb{P}^2$.  Otherwise, on $X_1$, $X_1\cap X_2$ is a double covering curve of $\mathbb{P}^1$.
  Then nonzero sections in the image of $\mathcal{R}_{X_i\to X_1\cap X_2}$ have different degrees. Hence only those sections of $H^0(X_i, (K_X+\Lambda)|_{X_i})$ vanishing on $X_1\cap X_2$ can be glued together into a global section.
  Thus we will obtain a contradiction as before.

  Therefore $(K_X+\Lambda)|_{X_1}$ is very ample. $X_1\cap X_2$ is a line on $X_1$. The other statements follow.

  For the non-normal case,
  we take a partial normalization $\nu\colon \tilde{X}\to X$ along the non-normal locus on $X_1$.
  $\tilde{X}=\tilde{X_1}\cup X_2$, where $\tilde{X_1}$ is the normalization of $X_1$.
  It is easy to check that $\Delta(\tilde{X},\nu^*(K_X+\Lambda))=0$.
  Then we see that $X_1\cap X_2$ is a line on each $X_i$.
  Hence we complete the proof.

\end{proof}
\begin{example}
  Let $X_1$ be a $\mathbb{P}^2$ blown up at $k$ distinct points as in Thm \ref{delta-genus-one} (7). Each $E_i$ is a line with respect to $-K_{X_1}$. Let $\Lambda_1$ be a general element of the form $E_1+...+E_k+B\in |-2K_{X_1}|$, then $\Lambda_1$ is nodal and  $E_i$ intersects $B$ at 3 distinct points.
  Let $(X_2,\Lambda_2)$ be a log surface as in Cor \ref{gepg-2}(iii) such that  $\Lambda_2=\Gamma_1+...+\Gamma_s+D$, where $\Gamma_i$ is a ruling and it intersects $D$  at 3 distinct points.
  Let $\tau\colon E_i\cong \Gamma_j$ be an isomorphism mapping  $E_i\cap B$ to $\Gamma_j\cap D$.

  Then  we can glue $(X_1 \cup X_2, \Lambda_1+\Lambda_2)$ along $E_i$ and $\Gamma_j$ by $\tau$ to obtain a log surface as in Thm \ref{diffdelta} (i).
\end{example}

The following theorem  and corollary  are first obtained in \cite{LR13}.

\begin{thm}\label{thm: log noether} (Stable log Noether inequality, cf. \cite[Thm 4.1]{LR13})
 Let $(X,\Lambda)$ be a connected Gorenstein stable log surface.
 Then
 \[\Delta(X,K_X+\Lambda)\ge 0. \]

\end{thm}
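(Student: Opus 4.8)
The plan is to reduce the general connected case to the two-component situation already handled by Lemma \ref{2comps}, and then to invoke the irreducible case as the base of an induction. Concretely, I would argue by induction on the number of irreducible components of $X$. If $X$ is irreducible, then $\Delta(X,K_X+\Lambda)\ge 0$ is exactly the statement $\Delta(\bar X,\pi^*(K_X+\Lambda))\le\Delta(X,K_X+\Lambda)$ combined with Thm \ref{gepg-2} applied to the normalization $(\bar X,\bar\Lambda+\bar D)$, which is itself a Gorenstein stable log surface with $\Delta(\bar X,K_{\bar X}+\bar D+\bar\Lambda)\ge 0$; since $H^0(X,K_X+\Lambda)\hookrightarrow H^0(\bar X,\pi^*(K_X+\Lambda))$ the inequality passes up. (For normal irreducible $X$ it is literally Thm \ref{gepg-2}.)

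For the inductive step, suppose $X$ has at least two components. I would write $X=X_1\cup X_2$ where $X_2$ is a single irreducible component chosen so that $X_1$ is still connected — such a choice is always possible, e.g. by removing a "leaf" component from the dual graph of $X$, or more robustly by taking $X_2$ to be any component whose removal does not disconnect $X$, which exists since a connected graph always has a non-cut vertex. Then Lemma \ref{2comps}(i) gives $\Delta(X,K_X+\Lambda)\ge\Delta(X_1,(K_X+\Lambda)|_{X_1})$. By the induction hypothesis applied to the connected Gorenstein stable log surface $(X_1,(K_X+\Lambda)|_{X_1} - \text{(boundary data)})$ — here one uses that $(K_X+\Lambda)|_{X_1}=K_{X_1}+(X_1\cap X_2)+\Lambda|_{X_1}$ makes $X_1$ together with the induced boundary a connected Gorenstein stable log surface in its own right — we get $\Delta(X_1,(K_X+\Lambda)|_{X_1})\ge 0$, and chaining the two inequalities finishes the proof.

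The step I expect to be the main obstacle is verifying that the restriction $(X_1,(K_X+\Lambda)|_{X_1})$ is genuinely a Gorenstein stable log surface of the same type, so that both the induction hypothesis and Lemma \ref{2comps} legitimately apply: one must check that $K_{X_1}+(X_1\cap X_2)+\Lambda|_{X_1}$ is an ample Cartier divisor and that the pair has Gorenstein slc singularities, with $X_1\cap X_2$ playing the role of extra reduced boundary. This is the content implicit in the adjunction formula $(K_X+\Lambda)|_{X_i}=K_{X_i}+C+\Lambda|_{X_i}$ recorded before Cor \ref{fibersection}, together with the observation (as in the proof of the first unnumbered theorem of \textsection\ref{non-normal}) that partial normalization preserves Gorenstein slc singularities; ampleness of the restriction is immediate from ampleness of $K_X+\Lambda$. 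Once this bookkeeping is in place the induction runs with no further input, since Cor \ref{genus-like} already packages the key monotonicity; indeed, an alternative one-line proof is simply to apply Cor \ref{genus-like} with $Y$ equal to any single irreducible component $X_i$ and then use the irreducible case $\Delta(X_i,(K_X+\Lambda)|_{X_i})\ge 0$ established above.
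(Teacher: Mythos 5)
Your proof is correct and follows essentially the same route as the paper: the paper's own proof cites Thm \ref{gepg-2} for the normal irreducible case, Lemma \ref{nonnorm&irred} for the irreducible non-normal case (whose proof begins with exactly your observation that $\Delta(X,K_X+\Lambda)\ge\Delta(\bar X,\pi^*(K_X+\Lambda))\ge 0$), and Lemma \ref{2comps} for the induction on components, which is the content of Cor \ref{genus-like} that you also invoke. The bookkeeping you flag --- that $(X_1,\Lambda|_{X_1}+X_1\cap X_2)$ is again a Gorenstein stable log surface --- is indeed the point handled by the adjunction $(K_X+\Lambda)|_{X_i}=K_{X_i}+C+\Lambda|_{X_i}$ and the partial-normalization theorem of \textsection\ref{non-normal}, so no gap remains.
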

\begin{proof}
This follows directly by Cor \ref{gepg-2}, Lemma \ref{nonnorm&irred} and Lemma \ref{2comps}.

\end{proof}

\begin{cor}\label{cor: nonnormal equality} (cf. \cite[Cor 4.10]{LR13})
 Let $(X, \Lambda)$ be a Gorenstein stable log surface such that
 $\Delta(X,K_X+\Lambda)=0$.
 Write $X=\bigcup X_i$, where $X_i$ is an irreducible component.

 Then
\begin{enumerate}
\item $\Delta(X_i, (K_X+\Lambda)|_{X_i})=0$. 
\item $K_X+\Lambda$ is very ample. It defines an embedding $\phi\colon X\hookrightarrow \mathbb{P}=|K_X+\Lambda|^*$;
\item if $X_{i}\cap X_{j}\not = \emptyset $ in codimension 1, then $X_{i}\cap X_{j}$ is a line on both $X_i$ and $X_j$;
\item $X$ is a tree of $X_i$ glued along lines.
\end{enumerate}
In particular, $\Lambda\neq 0$.
\end{cor}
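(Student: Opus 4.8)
The plan is to establish the four assertions in the order (1), (3), (4), (2), and then deduce $\Lambda\neq 0$, bootstrapping everything from the already-settled normal irreducible case (Theorem \ref{gepg-2}) via the additivity statements of this section. For (1): since $X_i\subset X$ is a connected subsurface, Corollary \ref{genus-like} together with the stable log Noether inequality (Theorem \ref{thm: log noether}) gives $0=\Delta(X,K_X+\Lambda)\ge\Delta(X_i,(K_X+\Lambda)|_{X_i})\ge 0$, so equality holds; and because a non-normal irreducible Gorenstein stable log surface has $\Delta\ge 1$ (Lemma \ref{nonnorm&irred}), each $X_i$ must be normal, hence appears in Theorem \ref{gepg-2}, and in particular $(K_X+\Lambda)|_{X_i}$ is very ample. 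For (3): if $X_i\cap X_j\neq\emptyset$, then $X_i\cup X_j$ is again a connected subsurface with $\Delta=0$ (Corollary \ref{genus-like}) and $\Delta(X_i,\cdot)=\Delta(X_j,\cdot)=0$, so Corollary \ref{equaldelta}(i) applies and $X_i\cap X_j$ is a line on each of $X_i,X_j$; in particular each pairwise intersection is irreducible, so the dual graph $G$ (a vertex for each $X_i$, an edge for each nonempty $X_i\cap X_j$) is a simple graph.

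For (4) I would order the components so that $Y_k:=X_1\cup\dots\cup X_k$ is connected for every $k$, put $C_k:=Y_{k-1}\cap X_k\neq\emptyset$, and feed this into inequality (\ref{deltaIneq}) for $Y_k=Y_{k-1}\cup X_k$. Since $\Delta(X_k,\cdot)=0$ and $\Delta(Y_{k-1},\cdot)=\Delta(Y_k,\cdot)=0$ (again Corollary \ref{genus-like} and Theorem \ref{thm: log noether}), that inequality forces $\max\{r_{Y_{k-1}\to C_k},r_{X_k\to C_k}\}\le 2$; combined with $r_{X_k\to C_k}\ge 2$ from Lemma \ref{restdim}(i) we get $r_{X_k\to C_k}=2$, so by Lemma \ref{restsecions} the image $\Phi_{|(K_X+\Lambda)|_{X_k}|}(C_k)$ spans a line, and as $(K_X+\Lambda)|_{X_k}$ is very ample this forces $C_k$ to be a single line. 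The crucial geometric input is now demi-normality of $X$: no curve can lie on three components of $X$ simultaneously, since at its generic point that would give three branches rather than a node. Hence if $X_k$ met two earlier components $X_{j_1},X_{j_2}$ the lines $X_{j_1}\cap X_k$ and $X_{j_2}\cap X_k$ would be distinct, contradicting irreducibility of $C_k$; so each $X_k$ ($k\ge 2$) meets $Y_{k-1}$ in exactly one line, $G$ acquires exactly one edge at each step, $G$ has $n-1$ edges, and therefore $G$ is a tree and $X$ is glued along lines.

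For (2) I would induct on the number $n$ of components: $n=1$ is the very ampleness recorded in step (1); for $n\ge 2$ pick a leaf $X_\ell$ of the tree, so $Y:=\bigcup_{i\neq\ell}X_i$ is a connected subsurface carrying a smaller tree, whence $(K_X+\Lambda)|_Y$ is very ample by the inductive hypothesis, and $Y\cap X_\ell$ is a line both on $X_\ell$ (by (3)) and on $Y$. Applying Lemma \ref{globalsection}(iii) to the pair $(Y,X_\ell)$ in both orders shows that $\mathcal{R}_{X\to Y}$ and $\mathcal{R}_{X\to X_\ell}$ are both surjective with nontrivial kernels whose base parts are $Y$ resp. $X_\ell$; this supplies enough sections to separate points and tangent vectors on all of $X$, so $K_X+\Lambda$ is very ample and defines an embedding into $|K_X+\Lambda|^*$. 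Finally, for $\Lambda\neq 0$: if $\Lambda=0$ and $n=1$ this contradicts Theorem \ref{gepg-2}, where the boundary is nonzero in every case; if $\Lambda=0$ and $n\ge 2$, a leaf $X_\ell$ would be a normal Gorenstein stable log surface whose entire boundary is the single connecting line, but a quick inspection of the list in Theorem \ref{gepg-2} shows no surface there has a single line as boundary (the boundary class always has intersection number $>1$ with $K+\Lambda$), again a contradiction.

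I expect the main obstacle to be assertion (4): one must carefully set up the filtration $Y_1\subset\dots\subset Y_n$, verify that $\Delta$ stays constant along it, extract from the equality in (\ref{deltaIneq}) the \emph{irreducibility} of each attaching curve $C_k$, and then cleanly combine this with the demi-normality constraint (no curve on three components) to rule out cycles; the very-ampleness induction in (2) and the ``$\Lambda\neq0$'' point are comparatively routine once (1), (3), (4) are in hand.
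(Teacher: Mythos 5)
Your proof is correct. Note that the paper itself supplies no argument for this corollary — it is attributed to \cite{LR13} — so there is no in-paper proof to compare against; but your derivation assembles exactly the toolkit the paper develops (Corollary \ref{genus-like} with Theorem \ref{thm: log noether} for statement (1), Corollary \ref{equaldelta}(i) for (3), inequality (\ref{deltaIneq}) together with Lemma \ref{restdim}(i) and the demi-normality constraint that no curve lies on three components for (4), and Lemma \ref{globalsection} plus induction over a leaf for (2)), and it mirrors the structure of the paper's own proof of the $\Delta=1$ analogue, Proposition \ref{nonnormdelta-1}. The one step worth keeping explicit in a write-up is the reduction of (4) to irreducibility of each attaching curve $C_k=Y_{k-1}\cap X_k$: your observation that $r_{X_k\to C_k}=2$ forces $C_k$ to be a single line (two distinct lines would span at least a plane), combined with the exclusion of a common line on three components, is precisely what rules out cycles, and your count of $n-1$ edges in the connected dual graph then closes the argument.
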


Finally we are able to classify reducible Gorenstein stable log surfaces with $\Delta(X,K_X+\Lambda)=1$.
\begin{thm}\label{nonnormdelta-1}
Let $(X,\Lambda)$ be a reducible Gorenstein stable log surface with $\Delta(X,K_X+\Lambda)=1$.
Write $X=\bigcup X_i$, where $X_i$ is an irreducible component.

Then
$X$ has a unique minimal connected component $U$ such that $\Delta(U,(K_X+\Lambda)|_{U})=1$, $X\setminus U$ is composed of several trees of surfaces $T_j$'s with $\Delta(T_j,(K_X+\Lambda)|_{T_j})=0$ and $X$ is glued by $U$ and $T_j$'s along lines, i.e. $X=U\cup \bigcup T_j$ with each $U\cap T_j$ a line with respect to $K_X+\Lambda$.

$U$ is one of the followings:
        \begin{itemize}
        \item[(i)] $X=U$ is a string of surfaces $X_i$ with $\Delta(X_i,(K_X+\Lambda)|_{X_i})=1$ and $|(K_{X}+\Lambda)|_{X_i}|$  composite with a pencil of elliptic curves. The connecting curves are all fibers.
            Moreover, in this case $|K_X+\Lambda|$ is composite with a pencil of elliptic curves.

        \item[(ii)]  $U$ is a string of surfaces glued along lines whose two end surfaces $X_i$'s are non-normal with $\Delta(X_i,(K_X+\Lambda)|_{X_i})=1$.

            Moreover, if $U$ is reducible, $|K_X+\Lambda|$ has a unique base point $c\in X$ and all the connecting curves of $U$  pass through $c$.
            
        \item[(iii)]   $U$ is irreducible and $|(K_{X}+\Lambda)|_{U}|$ is very ample.  
            Moreover, in this case $K_X+\Lambda$ is very ample.
        \item[(iv)]   $U=X_j\cup X_k $ where $X_j$, $X_k$ are two irreducible surfaces with  $\Delta(X_j,(K_X+\Lambda)|_{X_j})=\Delta(X_k,(K_X+\Lambda)|_{X_k})=0$. All the connecting curves of $X$ are lines except $X_j\cap X_k$. 

        \item[(v)]  $U$ is a cycle of surface $X_i$ with $\Delta(X_i,(K_X+\Lambda)|_{X_i})=0$. All the connecting curves of $X$ are lines.  

   \end{itemize}
\end{thm}
\begin{proof}
First by Cor \ref{genus-like} we see each component $X_i$ has $\Delta(X_i,(K_X+\Lambda)|_{X_i})=1$ or $0$. Second, once we obtain a connected component $U\subset X$ such that $\Delta(U,(K_X+\Lambda)|_{U})=1$ and $\Delta(X_i,(K_X+\Lambda)|_{X_i})=0$ for each $X_i\subset X\setminus U$,
we see that  $X\setminus U$ will be composed of several trees of surfaces $T_j$'s with $\Delta(T_j,(K_X+\Lambda)|_{T_j})=0$ by Lemma \ref{2comps} and induction hypothesis. Therefore it remains to describe $U$.

We distinguish between two cases whether there exists an irreducible component $X_i$ such that $\Delta(X_i,(K_X+\Lambda)|_{X_i})=1$.

Case 1.  There exists a component $X_i$ such that $\Delta(X_i,(K_X+\Lambda)|_{X_i})=1$.
 Then  by Cor \ref{equaldelta} and
Cor \ref{diffdelta}, either $(K_X+\Lambda)|_{X_i}$ is composite with a pencil of elliptic curves, $X_i$ is non-normal, or   $(K_X+\Lambda)|_{X_i}$ is very ample.

If $(K_X+\Lambda)|_{X_i}$ is composite with a pencil of elliptic curves,
then by Cor \ref{equaldelta} and
Cor \ref{diffdelta}, for each irreducible components $X_j$ connected to $X_i$, $(K_X+\Lambda)|_{X_j}$ is composite with a pencil of elliptic curves as well, $X_i\cap X_j$ is a fiber and $\Delta(X_j,(K_X+\Lambda)|_{X_j})=1$.
Therefore, inductively, each $(K_X+\Lambda)|_{X_k}$ is composite with a pencil of elliptic curves. 
Moreover, each $X_i$ is connected to at most two other components, as there are at most two fibers as the connecting curves which pass through the base point of $|(K_X+\Lambda)|_{X_i}|$  on $X_i$.
Thus $X=U$ is a string of such surfaces glued along fibers.

If $(K_X+\Lambda)|_{X_i}$ is very ample, then by Cor \ref{equaldelta} and Cor \ref{diffdelta},   every other irreducible component $X_j$ connected to $X_i$ has $\Delta(X_j,(K_X+\Lambda)|_{X_j})=0$ and the connecting curves are lines.
Hence we see that $U=X_i$.
Moreover, $K_X+\Lambda$ is very ample by Lemma \ref{globalsection}.

If $X_i$ is non-normal, then by Lemma \ref{2comps}
other component $X_j$ is either non-normal or has $\Delta(X_j,(K_X+\Lambda)|_{X_j})=0$ and $X$ is a tree of such surfaces.
Let $\nu\colon \tilde{X}\to X$ be a partial normalization along the non-normal curves of $X_j$' s.
It is easy to check that $\Delta(\bar{X}, \nu^*(K_X+\Lambda))=0$. $\nu^*H^0(X,K_X+\Lambda)\subset H^0(\bar{X}, \nu^*(K_X+\Lambda))$ has a base point $c\in \mathbb{P}:=|\nu^*(K_X+\Lambda)|^*$.
We see that $c$ is also the base point of $|(K_X+\Lambda)|_{X_j}|$ for any non-normal $X_j$.
If there is only one non-normal surface $X_i$, then $U=X_i$. If there are two non-normal surfaces $X_i$, $X_j$, then $U$ is the unique string of surfaces contained in $X$ with $X_i$, $X_j$ as the ends. If there are more than two non-normal surfaces, we claim that they lie on a unique minimal string of surfaces, which is $U$. Otherwise there are  three non-normal surfaces contained in a fork of surfaces $Y\subset X$. By Lemma \ref{2comps}, we see that the base point $c$ lies on each $X_k\subset Y$. However, in the central surface of $Y$, there are three connecting curves passing through $c$, which is impossible. Hence the claim is true. 

Case 2. Each irreducible component $X_i$ has $\Delta(X_i,(K_X+\Lambda)|_{X_i})=0$.

If there are two irreducible components $X_j$, $X_{k}$ such that
$\Delta(X_j\cup X_k,(K_X+\Lambda)|_{X_j\cup X_k})=1$,
then by Lemma \ref{decrease1}, $X_j\cap X_k$ is neither a line on $X_j$ nor $X_k$, and  $d_{X_j\to X_j\cap X_k}((K_X+\Lambda)|_{X_j})=d_{X_k\to X_j\cap X_k}((K_X+\Lambda)|_{X_k})=3$.
Furthermore, it is easy to see that $X$ is a tree of surfaces. Ungluing $X$ along $X_j\cap X_{k}$, there will be two connected trees of surfaces $V_1\supset X_j$, $V_2\supset X_{k}$. We claim $\Delta(V_1,(K_X+\Lambda)|_{V_1})=\Delta(V_2,(K_X+\Lambda)|_{V_2})=0$. Otherwise assume $\Delta(V_1,(K_X+\Lambda)|_{V_1})=1$, then $\Delta(V_1\cup X_{k},(K_X+\Lambda)|_{V_1\cup X_{k}})=1$ implies $ V_1\cap X_{k}=X_j\cap X_{k}$ is a line on $X_k$ by Lemma \ref{2comps}, a contradiction.
Hence $U=X_j\cup X_k$ and $X$ is a tree of surfaces whose connecting curves are lines except $X_j\cap X_k$.

Finally we consider the case that all the connecting curve are lines. It is easy to see that $X$ is not a tree, otherwise $\Delta(X,K_X+\Lambda)=0$. Hence there is a minimal cycle of surfaces $U$ such that $\Delta(U,(K_X+\Lambda)|_U)=1$. 

\end{proof}

\begin{cor}
Let $X$ be a connected reducible Gorenstein stable surface  with $K_X^2= p_g-1$.
Then $X$ is a union of two $\mathbb{P}^2$'s glued along a quartic curve on each $\mathbb{P}^2$. Moreover, $X$ is a double cover of $\mathbb{P}^2$ and can be deformed into a smooth stable surface  with $K_X^2= p_g-1$. ($K_X^2=2$)

\end{cor}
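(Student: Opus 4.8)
The starting point is that $K_X^2 = p_g - 1$ is exactly the statement $\Delta(X,K_X) = K_X^2 - h^0(K_X) + 2 = 1$, so the classification of Proposition \ref{nonnormdelta-1} applies with empty boundary; the task is to see which of its configurations (i)--(v) can carry $\Lambda = 0$, and to identify the survivor. The mechanism I would use throughout is that every irreducible component $X_i$ of $X$ carries the induced log structure $(K_X)|_{X_i} = K_{X_i} + C_i$, where $C_i := X_i\cap\overline{X\setminus X_i}$ is its connecting locus; since $X$ is connected and reducible, $C_i\neq 0$, and after normalizing $X_i$ if necessary $(X_i, C_i)$ is a normal Gorenstein stable log surface, listed in Theorem \ref{gepg-2} when $\Delta(X_i,(K_X)|_{X_i}) = 0$ and in Theorem \ref{delta-genus-one} when $\Delta(X_i,(K_X)|_{X_i}) = 1$ (item (7) of the latter being excluded, its boundary being empty). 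The one elementary fact I would extract from Theorem \ref{gepg-2} and reuse is that in each of its items the boundary curve has $(K_X+\Lambda)$-degree at least $4$.

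First I would show $X = U$, i.e.\ that every tree $T_j$ attached to $U$ is empty. Otherwise $T_j$ has a leaf component $X_i$ joined to the rest of $X$ along a single line; by Corollary \ref{genus-like}, $\Delta(X_i,(K_X)|_{X_i})\le\Delta(T_j,(K_X)|_{T_j}) = 0$, hence by Lemma \ref{nonnorm&irred} $X_i$ is normal and $(X_i, C_i)$, with $C_i$ a single line, would appear in Theorem \ref{gepg-2}, contradicting the degree bound just stated. Now I analyse the five shapes of $U$. Case (iii) would make $X = X_k$ irreducible, which is excluded. In case (i), Corollary \ref{equaldelta} shows the connecting curves of $X$ are irreducible fibres of the elliptic pencils, and by Theorem \ref{delta-genus-one} the boundary $C_i$ of each component is a sum of two such fibres, so each component carries two connecting curves; hence the dual graph of $X$ cannot be a string, a contradiction. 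In case (ii), an end component of the string is non-normal with $\Delta = 1$ and is attached along a single line, so its boundary would be a single line; but by Theorems \ref{delta1,1} and \ref{delta1,0} the nonzero boundary of an irreducible non-normal Gorenstein stable log surface with $\Delta = 1$ always has $(K_X+\Lambda)$-degree $\ge 2$, a contradiction. In case (v), a component of the cycle meets the rest of $X$ along exactly two lines (the trees being empty), so its connecting locus has degree $2 < 4$, again contradicting Theorem \ref{gepg-2}.

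Hence case (iv) holds: $X = X_1\cup X_2$ with each $X_i$ normal, $\Delta(X_i,(K_X)|_{X_i}) = 0$, and $C := X_1\cap X_2$ not a line. By Lemma \ref{decrease1}, $r_{X_i\to C}((K_X)|_{X_i}) = 3$, so by Lemma \ref{restsecions} the image $\Phi_{|(K_X)|_{X_i}|}(C)$ spans a $\mathbb{P}^2$ in $|(K_X)|_{X_i}|^*$. Going through Theorem \ref{gepg-2}: in items (ii)--(v) the reduced curve $C$ has $(K_X)|_{X_i}$-degree $\ge 5$ and lies on no hyperplane of the ambient space, so $\Phi_{|(K_X)|_{X_i}|}(C)$ spans the full $\mathbb{P}^N$ with $N\ge 3$ and $r_{X_i\to C}((K_X)|_{X_i}) = N+1 > 3$, a contradiction; only item (i) survives, giving $X_i = \mathbb{P}^2$, $(K_X)|_{X_i} = \mathcal{O}_{\mathbb{P}^2}(1)$, $C\in|\mathcal{O}_{\mathbb{P}^2}(4)|$ for $i = 1,2$. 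Thus $X$ is two copies of $\mathbb{P}^2$ glued along a common plane quartic. To finish I would record that $K_X^2 = 1 + 1 = 2$, that $p_g(X) = 3$ by the fibre-product formula of Corollary \ref{fibersection} (both maps $H^0(\mathbb{P}^2,\mathcal{O}_{\mathbb{P}^2}(1))\to H^0(C,\mathcal{O}_C(1))$ being isomorphisms, as $C$ lies on no line), and that the two projections $\mathbb{P}^2\xrightarrow{\sim}\mathbb{P}^2$ present $X$ as the double cover of $\mathbb{P}^2$ with $\pi_*\mathcal{O}_X = \mathcal{O}_{\mathbb{P}^2}\oplus\mathcal{O}_{\mathbb{P}^2}(-4)$ and degenerate branch divisor $2C\in|\mathcal{O}_{\mathbb{P}^2}(8)|$; moving the branch section to a general (smooth) member of $|\mathcal{O}_{\mathbb{P}^2}(8)|$ exhibits $X$ in a flat family whose general fibre is a smooth minimal surface of general type with $K^2 = 2$ and $p_g = 3$, so $X$ can be smoothed.

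The step I expect to cause the most trouble is the case-by-case elimination in the middle paragraph, and in particular cases (i) and (ii): one must check carefully that the connecting locus of a component --- which a priori could be reducible, or could meet the non-normal locus of that component --- genuinely cannot be a single line or a single elliptic fibre, and that the boundary curves listed in Theorems \ref{gepg-2}, \ref{delta-genus-one}, \ref{delta1,1} and \ref{delta1,0} do exhaust the relevant cases.
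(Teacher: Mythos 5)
Your argument is correct and follows the same overall route as the paper: invoke Proposition \ref{nonnormdelta-1} with $\Lambda=0$, rule out the trees $T_j$ and the configurations (i), (ii), (iii), (v) by comparing the connecting locus of a component with the boundaries permitted by the classification theorems, and then settle case (iv) using $r_{X_i\to X_1\cap X_2}=3$ together with $\ker\mathcal{R}_{X_i\to X_1\cap X_2}=0$ to force $X_i=\mathbb{P}^2$ with a quartic connecting curve. Two local differences are worth noting. First, to exclude the non-normal string (ii) the paper argues that the connecting lines all pass through the base point $c$ of $|K_X|$, which then fails to be a Gorenstein slc singularity; you instead observe that an end component would be an irreducible non-normal surface with $\Delta=1$ whose entire boundary is a single line of degree $1$, whereas every nonzero boundary occurring in Theorems \ref{delta1,1} and \ref{delta1,0} has degree at least $2$. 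Both are valid; yours stays entirely inside the classification lists (and is of the same flavour as your treatment of the leaves and of case (v)), while the paper's is a one-line local-singularity argument. Second, the paper's proof stops at ``two copies of $\mathbb{P}^2$ glued along a quartic'' and never verifies the double-cover and smoothability assertions in the statement; your final paragraph (the fold map with $\pi_*\mathcal{O}_X=\mathcal{O}_{\mathbb{P}^2}\oplus\mathcal{O}_{\mathbb{P}^2}(-4)$ and branch datum $2C\in|\mathcal{O}_{\mathbb{P}^2}(8)|$ deformed to a general octic) supplies exactly the missing verification, together with the computation $p_g=3$ via Corollary \ref{fibersection}. The one place where you are no more careful than the paper is case (i): both arguments implicitly take the statement of Proposition \ref{nonnormdelta-1}(i) to mean that each connecting curve of the string is a single fibre, so that an end component would have boundary a single fibre, contradicting Theorem \ref{delta-genus-one}(5); since that is how the proposition is phrased, this is an inherited imprecision rather than a new gap.
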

\begin{proof}

If $X\setminus U$ is not empty, then there is a leaf surface $X_i$ such that $\Delta(X_i,K_X|_{X_i})=0$. On the other hand, we have $K_X|_{X_i}=K_{X_i}+\bar{D_i}$ and the connecting curve $\bar{D_i}$ is  a line, which is impossible by Cor \ref{gepg-2}.
Hence $X=U$. A similar discussion tells us that $X$ is not a string of $X_i$ such that $|K_X|_{X_i}|$ is composite with a pencil of elliptic curves.
Therefore $X$ is either a string of surfaces containing non-normal ones or a union of two irreducible components whose connecting curve is not a line.
We show that the first case does not occur. Otherwise $X=U$ is a string of surfaces glued along lines passing through $c\in X$ where $c$ is the base point of $|K_X|$. However, in this case $c$ would not be a Gorenstein slc singularity  of type 7) in  \textsection \ref{clssofslc} , a contradiction.

Therefore $X=X_1\cup X_2$ with $\Delta(X_i, K_X|_{X_i})=0$ and $d_{X_i\to X_1\cap X_2}(K_X|_{X_i})=3$. Then $X_i$ is $\mathbb{P}^2$, $\Sigma_d$ or $C_{N-1}$.
While in these cases $\ker \mathcal{R}_{X_i\to X_1\cap X_2}=0$ (otherwise $K_X|_{X_i}=K_{X_i}+X_1\cap X_2\ge X_1\cap X_2$ as divisors, which is impossible),
thus $h^0(X_i,K_X|_{X_i})=d_{X_i\to X_1\cap X_2}(K_X|_{X_i})=3$.
Hence $X_i$ is $\mathbb{P}^2$, and the connecting curve $X_1\cap X_2$ is a curve of degree 4.

It is easy to see that $X$ is defined by the equation $z^2=f_4^2(x_0,x_1,x_3)$, where $f_4$ is the defining equation of the quartic curve. When deforming $f^2_4$ into a generic equation of degree 8, $X$ is deformed into a smooth stable surface  with $K_X^2= p_g-1$.
\end{proof}

Summing up the results of Gorenstein stable surfaces, we obtain

\begin{thm}
Let $X$ be a connected  Gorenstein stable surface  with $K_X^2= p_g-1$.

Then it is one of the followings:
\begin{itemize}
	 \item $X$ is a double cover of $\mathbb{P}^2$.  The branch curve $B\in |\mathcal{O}_{\mathbb{P}^2}(8)|$ is a reduced curve which admits curve singularities of lc double-covering type.  ($p_g(X)=3$)
	 \item $X$ is canonically embedded as a hypersurface of degree 10 in the smooth locus of $\mathbb{P}(1,1,2,5)$. ($p_g(X)=2$)
	 \item $X$ is a double cover of $\mathbb{P}^2$. The branch curve is $2C+B$, where $C$ and $B$ are reduced curves of degree $4-k$ and $2k$ respectively. $k=2,3$.  ($p_g(X)=3$)
	 \item $X$ is obtained from a log surface $(\bar{X},\bar{D})$ by gluing the 2-section $\bar{D}$,   where $(\bar{X},\bar{D})$ is a normal Gorenstein stable log surface as in Thm \ref{delta-genus-one} (2) or (3).  ($p_g(X)=2$)
	 \item $X$ is obtained from $\mathbb{P}^2$ by gluing a quartic curve. ($p_g(X)=2$)
	 \item  $X$ is obtained from a quadric in $\mathbb{P}^3$ by gluing a curve $\bar{D}$, where $\bar{D}\in |\mathcal{O}_{\bar{X}}(3)|$. ($p_g(X)=3$)
	 \item $X$ is a union of two $\mathbb{P}^2$'s glued along a quartic curve on each $\mathbb{P}^2$.  ($p_g(X)=3$)
\end{itemize}
\end{thm}

\begin{rmk}
	We have confirmed the conjecture in \cite{LR13} that there are no Gorenstein stable surfaces with $K_X^2= p_g-1\ge 3$.
	
	We also notice that as the $\Delta$-genus grows bigger, the stable surfaces become more difficult to classify as polarized varieties with big $\Delta$-genus are hard to classify in general.

\end{rmk}

\end{document}